\newtheorem{theorem}{Theorem}[section]
\newtheorem{proposition}[theorem]{Proposition}
\newtheorem{lemma}[theorem]{Lemma}
\newtheorem{claim}[theorem]{Claim}
\newtheorem*{claim*}{Claim}
\newtheorem{corollary}[theorem]{Corollary}
\newtheorem{Main Conjecture}[theorem]{Main Conjecture}
\theoremstyle{definition}
\newtheorem{definition}[theorem]{Definition}
\theoremstyle{remark}
\newtheorem{example}[theorem]{Example}
\theoremstyle{plain}
\newcommand{\seg}[2]{\mathrm{seg}_{#1}^{#2}}
\newcommand{\qkt}[1]{\emph{(QK#1)}}
\newcommand{\rmin}[2]{{\sf rmin}_{#2}(#1)}
\newcommand{\rmax}[2]{{\sf rmax}_{#2}(#1)}
\newcommand{\flex}[2]{{\sf flex}_{#2}(#1)}
\newcommand{\cellsize}{12}
\newlength{\cellsz} \setlength{\cellsz}{\cellsize\unitlength}
\newsavebox{\cell}
\sbox{\cell}{\begin{picture}(\cellsize,\cellsize)
\put(0,0){\line(1,0){\cellsize}}
\put(0,0){\line(0,1){\cellsize}}
\put(\cellsize,0){\line(0,1){\cellsize}}
\put(0,\cellsize){\line(1,0){\cellsize}}
\end{picture}}
\newcommand\cellify[1]{\def\thearg{#1}\def\nothing{}%
\ifx\thearg\nothing
\vrule width0pt height\cellsz depth0pt\else
\hbox to 0pt{\usebox{\cell} \hss}\fi%
\vbox to \cellsz{
\vss
\hbox to \cellsz{\hss$#1$\hss}
\vss}}
\newcommand\tableau[1]{\vtop{\let\\\cr
\baselineskip -16000pt \lineskiplimit 16000pt \lineskip 0pt
\ialign{&\cellify{##}\cr#1\crcr}}}
\newcommand{\excise}[1]{}
\begin{document}
\pagestyle{plain}
\title{Multiplicity-free key polynomials}
\author{Reuven Hodges}
\author{Alexander Yong}
\address{Dept.~of Mathematics, U.~Illinois at Urbana-Champaign, Urbana, IL 61801, USA} 
\email{rhodges@illinois.edu, ayong@illinois.edu}
\date{July 17, 2020}
\maketitle

\begin{abstract}
The \emph{key polynomials}, defined by A.~Lascoux--M.-P.~Sch\"utzenberger, are characters for the Demazure modules
of type $A$. We classify multiplicity-free key polynomials. The proof uses two combinatorial models for key polynomials.
The first is due to A.~Kohnert. The second is by S.~Assaf--D.~Searles, in terms of
\emph{quasi-key polynomials}. Our argument proves a sufficient condition 
for a quasi-key polynomial to be multiplicity-free. \end{abstract}

\section{Introduction}\label{sec:1}

 This is the companion paper to \cite{Hodges.Yong}. That work studies multiplicity-freeness of
 key polynomials in the context of \emph{spherical Schubert geometry}. We refer the reader to it for 
 additional motivation and references about the main result, Theorem~\ref{thm:mfKey}.
 
Let ${\sf Pol}_n={\mathbb Z}[x_1,\ldots,x_n]$. The \emph{Demazure operator} $\pi_j:{\sf Pol}_n\to {\sf Pol}_n$
is defined by
\[f\mapsto \frac{x_j f - x_{j+1}s_jf}{x_j-x_{j+1}}, \text{
\ where \  ${s_j}f:=f(x_1,\ldots,x_{j+1},x_j,\ldots,x_n)$.}\]

A \emph{weak composition} of length $n$ is $\alpha=(\alpha_1,\ldots,
\alpha_{n})\in {\mathbb Z}_{\geq 0}^n$.
Let ${\sf Comp}_n$ be the set of such $\alpha$.
If $\alpha \in {\sf Comp}_n$ is weakly decreasing, the \emph{key polynomial} $\kappa_{\alpha}$ is
$x^{\alpha}:=x_1^{\alpha_1}\cdots x_n^{\alpha_n}$. 
Otherwise, 
\[
\kappa_{\alpha}=\pi_j(\kappa_{\widehat \alpha}) \mbox{\ where $\widehat\alpha=(\alpha_1,\ldots,\alpha_{j+1},\alpha_j,\ldots,\alpha_n)$ and
$\alpha_{j+1}>\alpha_{j}$.}
\]
The key polynomials for $\alpha \in {\sf Comp}_n$
form a ${\mathbb Z}$-basis of ${\mathbb Z}[x_1,\ldots,x_n]$; see work of V.~Reiner--M.~Shimozono \cite{Reiner.Shimozono} and of A.~Lascoux \cite{Lascoux:polynomials} (and references therein) for more on $\kappa_{\alpha}$. In \cite[Section~4.4]{Hodges.Yong} we use the fact that $\kappa_{\alpha}$ is the character of a \emph{Demazure module} of $B\subset GL_n$
\cite{Reiner.Shimozono, Ion, Mason}. We do not need this in the present paper, which is entirely combinatorial.

Let ${\sf Comp} := \bigcup_{n=1}^{\infty}  {\sf Comp}_n$.  For $\alpha=(\alpha_1,\ldots,
\alpha_{\ell}), \beta=(\beta_1,\ldots, \beta_{k}) \in {\sf Comp}$, $\alpha$ \emph{contains the composition pattern} $\beta$ 
if there exists integers $j_1 < j_2 < \cdots < j_k$ that satisfy:
\begin{itemize}
\item $\alpha_{j_s} \leq \alpha_{j_t}$ if and only if $\beta_{s} \leq \beta_{t}$, 
\item $|\alpha_{j_s} - \alpha_{j_t}| \geq |\beta_{s} - \beta_{t}|$.
\end{itemize}
If $\alpha$ does not contain $\beta$, $\alpha$ \emph{avoids} $\beta$. This is a recapitulation of \cite[Definition~4.8]{Hodges.Yong}. 
Let \[ {\sf KM} = \{ (0,1,2), (0,0,2,2), (0,0,2,1), (1,0,3,2), (1,0,2,2) \}. \] 
Define ${\overline {\sf KM}}_n$ to be those $\alpha\in {\sf Comp}_n$ avoiding all compositions in ${\sf KM}$. The expansion
\[\kappa_{\alpha}=\sum_{\gamma\in {\sf Comp}_n} c_{\gamma}x^{\gamma}\] 
is \emph{multiplicity-free} if $c_{\gamma}\in \{0,1\}$ for all
$\gamma\in {\sf Comp}_n$.

\begin{theorem}
\label{thm:mfKey}
$\kappa_{\alpha}$ is multiplicity-free if and only if $\alpha \in {\overline {\sf KM}}_n$.
\end{theorem}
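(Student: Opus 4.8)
The plan is to prove the two directions of the biconditional separately, with the combinatorial models doing the heavy lifting. For the ``only if'' direction, I would first establish a \emph{monotonicity} or \emph{pattern-hereditary} principle: if $\alpha$ contains one of the five patterns in ${\sf KM}$, then $\kappa_\alpha$ has a coefficient $\geq 2$. The natural mechanism is to show that the composition-pattern containment relation interacts well with Kohnert's model — restricting Kohnert diagrams for $\alpha$ to the rows indexed by $j_1<\cdots<j_k$ should (after a normalization accounting for the inequality $|\alpha_{j_s}-\alpha_{j_t}|\geq|\beta_s-\beta_t|$ on row lengths) produce enough Kohnert diagrams of a subpattern to force a repeated weight. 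Concretely, I would hand-check the five minimal patterns in ${\sf KM}$ directly — computing $\kappa_\beta$ for $\beta\in{\sf KM}$ and exhibiting a weight $\gamma$ with $c_\gamma=2$ — and then argue that containment of a pattern propagates the multiplicity upward. The extra ``slack'' in the row lengths is exactly what lets a Kohnert move available in the small diagram be performed inside the large one without interference from the extra rows and columns.

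For the ``if'' direction — the substantive half — the plan is to pass to the Assaf--Searles quasi-key expansion $\kappa_\alpha=\sum_{\beta} \mathfrak{Q}_\beta$ (a sum over quasi-key polynomials indexed by compositions obtainable from $\alpha$ by a prescribed combinatorial rule), and to prove two things: (i) each quasi-key polynomial $\mathfrak{Q}_\beta$ appearing is itself multiplicity-free when $\alpha$ avoids ${\sf KM}$, and (ii) the supports of the distinct $\mathfrak{Q}_\beta$'s occurring are pairwise disjoint, so no cancellation or addition of coefficients across summands can create a $2$. Part (i) is where I expect to prove the promised \emph{sufficient condition for a quasi-key polynomial to be multiplicity-free} advertised in the abstract: using the quasi-Kohnert / semistandard-type model for $\mathfrak{Q}_\beta$, I would show that if $\beta$ avoids the relevant patterns then the monomials generated are distinct, typically by exhibiting a canonical reading order that reconstructs the diagram from its weight. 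Part (ii) should follow from the known combinatorics of how $\kappa$ decomposes into quasi-keys — the indexing compositions are ``quasi-Yamanouchi'' refinements of $\alpha$ and their leading terms are distinct, which propagates to disjoint supports once multiplicity-freeness of each piece is in hand.

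The main obstacle, I expect, is part (i): controlling the quasi-key polynomials. One must pin down \emph{exactly} which pattern-avoidance hypotheses on $\beta$ guarantee $\mathfrak{Q}_\beta$ is multiplicity-free, and then verify that every $\beta$ arising in the quasi-key expansion of a ${\overline{\sf KM}}_n$ composition $\alpha$ inherits those hypotheses. There is a delicate bookkeeping point here: the compositions $\beta$ in the expansion are \emph{not} subpatterns of $\alpha$ in the sense of ${\sf KM}$-containment — they are obtained by a different operation — so I cannot simply quote pattern-heredity. Instead I would need a lemma translating ``$\alpha$ avoids ${\sf KM}$'' into a structural statement about $\alpha$ (e.g., a constraint on the multiset of its distinct values and on the positions where near-equal values occur), show this structural statement is preserved under the quasi-key decomposition rule, and finally show it implies multiplicity-freeness of $\mathfrak{Q}_\beta$ via the quasi-Kohnert model. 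Establishing the precise form of that structural invariant, and checking its stability, is the technical crux.

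One should also handle the base and edge cases carefully: compositions with at most two distinct nonzero parts, or short compositions where no pattern of ${\sf KM}$ can embed, are automatically multiplicity-free and serve as the base of the induction; and one must confirm that trailing zeros and the length $n$ play no role, i.e.\ that multiplicity-freeness of $\kappa_\alpha$ depends only on $\alpha$ up to appending zeros, so that the statement is genuinely about ${\sf Comp}$ rather than a fixed ${\sf Comp}_n$.
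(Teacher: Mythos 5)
Your overall architecture matches the paper's: Kohnert diagrams drive the ``only if'' direction, and the Assaf--Searles quasi-key expansion drives the ``if'' direction by proving (i) each summand is multiplicity-free and (ii) the summands have pairwise disjoint supports. You also correctly identify the crux of (i): one must show the indexing compositions $\beta$ arising in the expansion inherit a usable structural property from $\alpha$; the paper does exactly this by proving that ${\sf Qlswap}(\alpha)\subseteq \overline{\sf KM}^{\geq 1}_n$ whenever $\alpha\in\overline{\sf KM}^{\geq 1}_n$ (a tedious but mechanical pattern-chase). Your Kohnert-side heuristic is likewise sound: the paper does not establish a general ``pattern propagation'' lemma but instead exploits the slack $|\alpha_{j_s}-\alpha_{j_t}|\geq|\beta_s-\beta_t|$ directly, for each of the five patterns, to perform two inequivalent sequences of Kohnert moves landing in the same weight.

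There is, however, a genuine gap in your step (ii). You assert that because the indexing compositions of the quasi-key expansion are distinct (with ``distinct leading terms''), disjointness of supports follows ``once multiplicity-freeness of each piece is in hand.'' This is false in general: $x^\gamma+x^\mu$ and $x^\tau+x^\mu$ are both multiplicity-free with distinct leading terms but overlapping support, and nothing a priori rules this out for $\mathfrak{D}_\gamma$ and $\mathfrak{D}_\tau$. Disjointness is where the pattern hypothesis does its second job. The paper proves a two-sided dominance bound on the weight of any $T\in{\sf qKT}(\beta)$: not only $\beta\leq_{\sf Dom}{\sf wt}(T)$ (which is elementary), but also an \emph{upper} bound $\sum_{i\leq b}{\sf wt}(T)_i\leq\sum_{i\leq b}\beta_i+{\sf flex}_{\beta}(b)$ in terms of an explicit ``flex'' statistic on $\beta$ — and this upper bound uses $\beta\in\overline{\sf KM}^{\geq 1}_n$ essentially. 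It then proves that for $\gamma>_{\rm lex}\tau$ in ${\sf lswap}(\alpha)$ there is a $z$ with $\tau_1+\cdots+\tau_z+{\sf flex}_{\tau}(z)<\gamma_1+\cdots+\gamma_z$, so the two dominance intervals are genuinely separated. Without some such quantitative control of the \emph{whole} support of each $\mathfrak{D}_\beta$, not just its leading term, step (ii) does not close. A related, smaller point: the paper normalizes by $\alpha\mapsto\alpha+(1,\ldots,1)$, using $\kappa_{\alpha+1^n}=x_1\cdots x_n\,\kappa_\alpha$, to force all parts $\geq 1$ (needed in the quasi-key lemmas), rather than stripping or appending trailing zeros as you suggest; the latter changes which variables appear and is not obviously a harmless reduction.
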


\begin{example}
$\alpha=(0,1,1)\in {\overline {\sf KM}}_3$. $\kappa_{\alpha}=x_2 x_3 +x_1 x_3 + x_2 x_1$
is multiplicity-free.\qed
\end{example}

\begin{example}
$\alpha=(\underline{0},2,\underline{1},\underline{2}) \not\in {\overline {\sf KM}}_4$ (contains $(0,1,2)$ in the underlined positions). 
\begin{multline}\nonumber
\kappa_{\alpha} =x_1^2 x_2^2 x_4 + x_1^2 x_2^2 x_3+2 x_1^2 x_2 x_3 x_4+x_1^2 x_2 x_4^2 +x_1^2 x_2 x_3^2 +x_1^2 x_3 x_4^2\\+ x_1^2 x_3^2 x_4 +2 x_1 x_2^2 x_3 x_4+x_1 x_2^2 x_4^2+x_1 x_2^2 x_3^2+x_1 x_2 x_3 x_4^2+x_1 x_2 x_3^2 x_4+x_2^2 x_3 x_4^2+x_2^2 x_3^2 x_4,
\end{multline}
has multiplicity.
\qed
\end{example}


Theorem~\ref{thm:mfKey} is the same as \cite[Theorem~4.10]{Hodges.Yong} (stated there without proof). In \emph{ibid.}, we initiated a study of the notion of \emph{split} multiplicity-free problems. Theorem~\ref{thm:mfKey} concerns the ``most split''
case of these problems (the ``$[n-1]$'' case, in the terminology of \emph{ibid.}). 

The sufficiency proof uses the \emph{quasi-key} model of key polynomials due to S.~Assaf--D.~Searles \cite{Assaf.Searles}.
In Section~\ref{sec:assafsearles}, we prove a preparatory theorem (Theorem~\ref{thm:qksummary}), which gives sufficient conditions
for their \emph{quasi-key polynomials} to be multiplicity-free. The conclusion of the proof of Theorem~\ref{thm:mfKey} is given in 
Section~\ref{sec:mfKey}. There, the
necessity proof uses the older \emph{Kohnert diagram} model \cite{Kohnert}. 

A.~Fink--K.~M\'esz\'aros--A.~St.~Dizier's \cite[Theorem~1.1]{FMSD} characterizes multiplicity-free \emph{Schubert polynomials} in terms of classical pattern avoidance of permutations. Since Schubert polynomials are linear combinations
of key polynomials with positive integer coefficients (see \cite[Theorem~4]{Reiner.Shimozono}), our results are related.
We do not know how to derive one result from the other. The proof methods are different.  As explained in \cite[Section~4.3]{Hodges.Yong}, one can look forward to finding ``split'' generalizations of both theorems.

\section{Quasi-key polynomials of S.~Assaf-D.~Searles} \label{sec:assafsearles}
\subsection{Multiplicity-freeness}

 \emph{Dominance order}
on $ {\sf Comp}_n$ is 
\[\alpha \geq_{\sf Dom} \beta  \text{\ \ if $\sum_{i=1}^t \alpha_i \geq \sum_{i=1}^t \beta_i$ for all
$1\leq t\leq n$}.\]

We will use notions introduced in S.~Assaf-D.~Searles' \cite{Assaf.Searles}.

\begin{definition}
A \emph{quasi-key tableau} $T$ of shape $\alpha$ fills $D(\alpha)$ with ${\mathbb Z}_{>0}$ such that

\begin{enumerate}
\item[\qkt{1}] Entries weakly decrease, left to right, along rows. Entries in row $i$ are at most $i$.
\item[\qkt{2}] Entries in each column are distinct. Entries increase upward in the first column.
\item[\qkt{3}] If $i$ appears above $k$ in the same column and $i<k$, then there is a $j$ that appears immediately to the right of that $k$, and $i < j$.
\item[\qkt{4}] If $r<s$, $\alpha_r<\alpha_s$, and $(r,c),(s,c+1)\in D(\alpha)$ then
$T(r,c)<T(s,c+1)$.
\end{enumerate}

\end{definition}

Let ${\sf qKT}(\alpha)$ be the set of quasi-key tableaux of shape $\alpha$. Given $T\in {\sf qKT}(\alpha)$, let ${\sf wt}(T)=(w_1,w_2,\ldots,w_\ell)$ where $w_i$ is the number of $i$'s appearing in $T$.

\begin{definition}
\label{def:quasiKeyPoly}
The \emph{quasi-key polynomial} $\mathfrak{D}_\alpha$ is 
\[\mathfrak{D}_\alpha = \sum_{T \in {\sf qKT}(\alpha)} x^{{\sf wt}(T)}.\]
\end{definition}

\begin{definition}
A \emph{left swap} of $\alpha\in  {\sf Comp}_n$ is $(\alpha_1,\ldots, \alpha_j,\ldots, \alpha_i,\ldots,\alpha_n)$ where
$\alpha_i < \alpha_j$ for some $i<j$. Let ${\sf lswap}(\alpha)\subseteq  {\sf Comp}_n$ be all compositions obtained by iteratively
applying (a possibly empty sequence of) left swaps to $\alpha$. For $\alpha\in {\sf Comp}_n$, let ${\sf flat}(\alpha)\in  {\sf Comp}_n$ be $\alpha$ with all  $0$'s removed. Now define 
\[{\sf Qlswap}(\alpha)=\{\gamma\in {\sf lswap}(\alpha): \text{$\gamma \leq_{\sf Dom} \tau$, for all $\tau \in {\sf lswap}(\alpha)$ such that ${\sf flat}(\gamma) = {\sf flat}(\tau)$}\}.\] 
\end{definition}

\begin{theorem}[\cite{Assaf.Searles}]\label{AssafSearlesTheorem}
\[\displaystyle \kappa_\alpha = \sum_{\beta \in {\sf Qlswap}(\alpha)} \mathfrak{D}_\beta.\]
\end{theorem}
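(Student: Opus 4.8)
The plan is to expand both sides of the asserted identity in the $\mathbb{Z}$-basis of ${\sf Pol}_n$ given by the \emph{fundamental slide polynomials} $\mathfrak{F}_\gamma$ of Assaf--Searles (the polynomial analogues of Gessel's fundamental quasisymmetric functions) and to match the two expansions term by term; since both $\kappa_\alpha$ and each $\mathfrak{D}_\beta$ are nonnegative integer combinations of the $\mathfrak{F}_\gamma$, it suffices to prove an equality of index multisets. A sanity check that orients the whole argument is the base case $\alpha$ weakly decreasing: then no left swap is available, so ${\sf Qlswap}(\alpha)=\{\alpha\}$, and a short verification of \qkt{1}--\qkt{4} shows ${\sf qKT}(\alpha)$ is the single tableau with every entry of row $i$ equal to $i$, whence $\mathfrak{D}_\alpha = x^\alpha = \kappa_\alpha$.

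For the right-hand side I would \emph{standardize}: given $T \in {\sf qKT}(\beta)$, relabel its entries by $1,\dots,|\beta|$ in the unique order compatible with the reading word of $T$, record the resulting descent composition $\gamma(T)$, and group the tableaux of ${\sf qKT}(\beta)$ into fibers over their standardizations. A routine ``compatible sequences'' argument then shows each fiber contributes exactly one slide polynomial $\mathfrak{F}_{\gamma}$, giving $\mathfrak{D}_\beta = \sum_{S} \mathfrak{F}_{\gamma(S)}$ summed over standard quasi-key tableaux $S$ of shape $\beta$; hence $\sum_{\beta\in {\sf Qlswap}(\alpha)}\mathfrak{D}_\beta$ acquires an explicit fundamental-slide expansion. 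The content to be checked here is that conditions \qkt{1}--\qkt{3} make the fiber and descent bookkeeping work, while \qkt{4} (the genuine quasi-key condition) is precisely what cuts $\mathfrak{D}_\beta$ down from a sum of Demazure atoms to a single quasi-key polynomial.

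For the left-hand side I would use a tableau model for $\kappa_\alpha$---semistandard key tableaux together with the Lascoux--Sch\"utzenberger right key, or equivalently the Kohnert-diagram model invoked later in the paper---to produce, by the same standardization, an expansion $\kappa_\alpha = \sum_{U} \mathfrak{F}_{\gamma(U)}$ over standard key objects $U$ associated to $\alpha$. The crux is then a descent-preserving bijection between these objects $U$ and the disjoint union, over $\beta \in {\sf Qlswap}(\alpha)$, of the standard quasi-key tableaux of shape $\beta$. This is where the definition of ${\sf Qlswap}(\alpha)$ earns its keep: from a standard key object for $\alpha$ one extracts the composition recording in which row each entry sits (an element of ${\sf lswap}(\alpha)$), and the claim is that the $\leq_{\sf Dom}$-minimal representative of its ${\sf flat}$-class is the unique $\beta \in {\sf Qlswap}(\alpha)$ into whose ${\sf qKT}(\beta)$ the object maps, the residual data being a standard quasi-key tableau of that shape. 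Proving this assignment is simultaneously \emph{injective} (no two shapes $\beta$ share an image---this uses dominance-minimality) and \emph{surjective} (every standard quasi-key tableau of every such $\beta$ arises) is the main obstacle; the rest is bookkeeping.

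An alternative, purely recursive route mirrors the definition of $\kappa_\alpha$: induct on the length of a permutation straightening $\alpha$ to a partition, with base case as above, and in the inductive step verify $\sum_{\beta\in {\sf Qlswap}(\alpha)}\mathfrak{D}_\beta = \pi_j\bigl(\sum_{\gamma\in {\sf Qlswap}(\widehat\alpha)}\mathfrak{D}_\gamma\bigr)$ when $\alpha$ comes from $\widehat\alpha$ by swapping adjacent parts with $\widehat\alpha_j > \widehat\alpha_{j+1}$. Here the obstruction is that individual $\mathfrak{D}_\gamma$ are not $\pi_j$-stable, so one needs a clean rule for $\pi_j \mathfrak{D}_\gamma$ as a (possibly signed) combination of quasi-key polynomials and then a telescoping/cancellation over ${\sf Qlswap}(\widehat\alpha)$---again governed by the dominance-minimal selection. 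I expect the slide-polynomial approach to be the cleaner of the two, with the overlap-free and omission-free matching of standard objects being the single genuinely delicate point.
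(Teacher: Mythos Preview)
The paper does not prove this statement at all: Theorem~\ref{AssafSearlesTheorem} is quoted from \cite{Assaf.Searles} and used as a black box, so there is no ``paper's own proof'' to compare against. What you have written is not a proof but a plan, and you yourself flag the hard parts as unresolved (the descent-preserving bijection being simultaneously injective and surjective in the slide-polynomial approach; the lack of a $\pi_j$-rule for individual $\mathfrak{D}_\gamma$ in the recursive approach). Both outlines are in the spirit of the arguments Assaf and Searles actually give, but neither becomes a proof until those acknowledged obstacles are discharged; in particular, the claim that the dominance-minimal representative in a ${\sf flat}$-class is exactly the shape picked out by the standard key object is the entire content of the theorem, and you have only asserted it.
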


\begin{example}
Let $\alpha=(3,2,1,3,2)$. Then
\begin{multline}\nonumber
\kappa_{\alpha}=x_1^3  x_2^2 x_3^3 x_4^2 x_5+x_1^3 x_2^2 x_3^3 x_4 x_5^2+x_1^3  x_2^3 x_3^2 x_4^2 x_5
+x_1^3 x_2^3 x_3^2  x_4 x_5^2 +x_1^3 x_2^2 x_3^2  x_4^3 x_5\\
+x_1^3 x_2^3 x_3 x_4^2 x_5^2+{\color{blue} x_1^3 x_2^2 x_3 x_4^3 x_5^2+ x_1^3 x_2^2 x_3^2 x_4^2 x_5^2}
\end{multline}
\begin{multline}\nonumber
\text{and \ } {\sf lswap}(\alpha)=\{(3,2,1,3,2),(3,3,1,2,2),(3,2,3,1,2),(3,2,2,3,1),(3,3,2,1,2),\\
(3,3,2,2,1),(3,2,3,2,1)\}.
\end{multline}
Since $\alpha$ contains no $0$'s, ${\sf Qlswap}(\alpha)={\sf lswap}(\alpha)$ (in fact, this will be the case starting
in Section~\ref{sec:6.2}). For all $\beta\in {\sf lswap}(\alpha)$, except $\beta=\alpha$, $\#{\sf qKT}(\beta)=1$; the unique
tableau is  
the \emph{super quasi-key} tableau: the one that places only $b$'s in row $b$. Hence ${\mathfrak D}_{\beta}=x^{\beta}$
in those cases. When $\beta=\alpha$ there are
two quasi-key tableaux, namely
\[\tableau{5 & 5\\4 & 4 & 4\\ 3 \\ 2 & 2\\1&1&1} \text{ \ \ and \  \ }
\tableau{5 & 5\\4 & 4 & 3\\ 3 \\ 2 & 2\\1&1&1}.
\]
Thus, ${\mathfrak D}_{\alpha}={\color{blue} x_1^3 x_2^2 x_3 x_4^3 x_5^2 + x_1^3 x_2^2 x_3^2 x_4^2 x_5^2}$. This
all agrees with Theorem~\ref{AssafSearlesTheorem}.\qed
\end{example}

 Define 
\[{\overline {\sf KM}}_n^{\geq 1}:=\{\alpha\in {\overline {\sf KM}}_n:  \alpha_i \geq 1 \text{\  for $1 \leq i \leq n$}\}.\] 

\begin{theorem}
\label{thm:qksummary}
${\mathfrak D}_{\beta}$ is multiplicity-free if $\beta\in {\sf Qlswap}(\alpha)$ and
$\alpha \in {\overline {\sf KM}}_n^{\geq 1}$.
\end{theorem}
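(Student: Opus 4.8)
The plan is to translate the hypothesis $\alpha \in {\overline{\sf KM}}_n^{\geq 1}$ into a structural statement about the entries of $\beta$, and then analyze quasi-key tableaux of shape $\beta$ directly. First I would record what pattern avoidance of ${\sf KM}$ means for a composition with all parts $\geq 1$: avoiding $(0,1,2)$ rules out three increasing distinct values in any three positions (left to right), and the remaining four patterns, being length-four with a leading entry no smaller than a later one, restrict how ``staircase-like'' the profile of $\beta$ can be. Since left swaps only rearrange parts while weakly increasing the prefix sums that define the shape, and since ${\sf Qlswap}$ picks out the dominance-minimal representatives within each ${\sf flat}$-class, I expect $\beta$ to have a very constrained shape: essentially the nonzero parts of $\beta$ take at most a bounded number of distinct values arranged so that no forbidden configuration of rows $(r,c),(s,c+1)$ with $\alpha_r<\alpha_s$ appears in a complicated way. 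The right lemma to isolate here is a normal form for $\beta$ — I would prove that $D(\beta)$ decomposes into a ``staircase part'' plus a rectangular block, or something equally rigid, so that the only freedom in building $T\in{\sf qKT}(\beta)$ is local.

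Next, with the shape pinned down, I would enumerate ${\sf qKT}(\beta)$ and compute weights. Conditions (QK1)--(QK4) are strong: (QK2) forces the first column to be $1,2,3,\dots$ reading upward, so the first column is rigid; (QK1) bounds row $i$ by $i$ and forces weak decrease along rows; (QK3) is the key ``Kohnert-type'' constraint linking an inversion in a column to a strictly larger entry to the right. The strategy is to show that in the constrained shapes arising from ${\overline{\sf KM}}_n^{\geq 1}$, each cell outside the first column has at most two legal values given the cells below and to its left, and — crucially — that two distinct tableaux $T \neq T'$ of shape $\beta$ never have ${\sf wt}(T) = {\sf wt}(T')$. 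For the latter I would argue that the positions where $T$ and $T'$ first differ (say in a fixed reading order) force a difference in the count of some specific value $i$, because the allowed local modifications each change exactly one coordinate of the weight vector (e.g. replacing an entry $k$ by a strictly smaller $j$ in row $k$ changes $w_k$ and $w_j$). Chaining these observations gives injectivity of $T \mapsto {\sf wt}(T)$ on ${\sf qKT}(\beta)$, which is exactly multiplicity-freeness of $\mathfrak{D}_\beta$.

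I would organize the write-up as: (i) a combinatorial lemma describing ${\sf Qlswap}(\alpha)$ for $\alpha\in{\overline{\sf KM}}_n^{\geq 1}$ — i.e., which $\beta$ actually occur and a normal form for their shapes; (ii) a structure lemma for ${\sf qKT}(\beta)$ for such $\beta$, identifying the finitely many ``toggle'' sites; (iii) a weight-injectivity argument completing the proof. It may help to further reduce to ${\sf flat}(\beta)$ — replacing $\beta$ by its positive part changes neither ${\sf qKT}$ combinatorics in an essential way nor the pattern-avoidance hypothesis (since all parts are already $\geq 1$) — so that one works with strictly positive compositions throughout, matching the remark in the $(3,2,1,3,2)$ example that ${\sf Qlswap}={\sf lswap}$ in this regime.

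The hard part will be step (i) together with the global injectivity in step (iii): local two-valuedness at each toggle site is not enough on its own, since independent toggles could conspire to preserve the total weight, so I expect to need the rigidity of the shape to guarantee the toggle sites are ``weight-independent'' (each governs a distinct coordinate, or they are totally ordered in a way that makes cancellation impossible). Establishing that the ${\sf KM}$ patterns are exactly the obstructions to this rigidity — no more, no less — is where the real work lies; the bound of five small patterns suggests a finite case analysis on the relative order and gaps of a few consecutive parts of $\beta$, which is tedious but mechanical once the normal form is in hand.
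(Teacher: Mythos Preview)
Your three-step outline---(i) characterize $\beta$, (ii) structure of ${\sf qKT}(\beta)$, (iii) weight-injectivity---matches the paper's architecture, but two of your proposed mechanisms are off in ways that would block the argument.

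First, your step (i) is more complicated than it needs to be. The paper's key observation is that ${\sf lswap}(\alpha)\subseteq{\overline{\sf KM}}_n^{\geq 1}$ whenever $\alpha\in{\overline{\sf KM}}_n^{\geq 1}$: a single left swap cannot introduce any of the five forbidden patterns (this is a finite case analysis). Since all parts are $\geq 1$, ${\sf Qlswap}(\alpha)={\sf lswap}(\alpha)$, so every $\beta$ you care about is \emph{already} in ${\overline{\sf KM}}_n^{\geq 1}$. There is no further ``normal form'' to extract; the theorem reduces to proving $\mathfrak{D}_\beta$ multiplicity-free for $\beta\in{\overline{\sf KM}}_n^{\geq 1}$.

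Second, your ``two legal values per cell'' model is not correct and is where the proposal would stall. In the paper's running example $\alpha=(10,5,12,9,8,8,4,2,5,1,3)$, the unforced cells in rows $3$--$6$ admit many values, not two. What the paper does instead is decompose $[1,n]$ into maximal weakly-decreasing runs (``segments'') and, using $(0,1,2)$-avoidance together with the length-four patterns, prove: most rows are entirely forced to contain only their own index; in the remaining rows, at most one box per column above row $b$ can carry a label $<b$. The injectivity argument then fixes the \emph{maximum value} $b$ on which $T$ and $T'$ disagree (not the first differing position in a reading order), and uses the segment structure and the at-most-one-small-label-per-column lemma to force the $b$'s to occupy identical positions in $T$ and $T'$, a contradiction. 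Your ``first position differs'' ordering does not give the leverage this max-value choice provides, because it is precisely the agreement on all larger labels that pins down where the $b$'s can go.
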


In particular, ${\mathfrak D}_{\alpha}$ is multiplicity-free if $\alpha \in {\overline {\sf KM}}_n^{\geq 1}$.
It would be interesting to characterize precisely when ${\mathfrak D}_{\alpha}$ is multiplicity-free. D.~Brewster, H.~Raza
and the first author have conjectured that the hypothesis that $\alpha_i\geq 1$ in Theorem~\ref{thm:qksummary}
can be dropped.

The remainder of this section is devoted to the proof of Theorem~\ref{thm:qksummary}.

\subsection{Lemmas}\label{sec:6.2}
We need lemmas about ${\overline {\sf KM}}_n^{\geq 1}$, and ${\sf qKT}(\alpha)$ for 
$\alpha \in {\overline {\sf KM}}_n^{\geq 1}$. Given $\alpha\in  {\sf Comp}_n$, let 
$i_1 < \cdots < i_k$
be all indices such that $\alpha_{i_r -1} < \alpha_{i_r }$. For convenience, let  $i_0 = 1$, $i_{k+1}=n+1, \alpha_0=\infty$, and 
$\alpha_{i_{k+1}}=0$. The $m$-th \emph{segment} of $\alpha$ is 
\[ \seg{m}{}(\alpha) = \{ i_{m-1},i_{m-1}+1,\ldots,i_{m} - 1 \}, \] and it is denoted $\seg{m}{}$ when the composition is clear by context.

Define
\begin{align*}
\seg{m}{1} := & \{ b \in \seg{m}{} \, | \, \alpha_b \geq \alpha_{i_m} \},\\
\seg{m}{2} := & \{ b \in \seg{m}{}\, | \, \alpha_b < \min(\alpha_{i_{m-1} - 1}, \alpha_{i_m})\textrm{ and } b < i_{m} - 1 \},\\
 \seg{m}{3} := & \left\{
\begin{array}{cl}
\emptyset &  \text{if $m=k+1$}\\
\{ i_{m} - 1 \} & \text{otherwise.}
\end{array}\right.
\end{align*}

\begin{lemma}
\label{lemma:segPartsRange}
Let $\alpha \in  {\overline {\sf KM}}_n^{\geq 1}$. 
\begin{enumerate}
\item[(a)] $\seg{m}{} = \seg{m}{1}\sqcup \seg{m}{2}\sqcup \seg{m}{3}$.
\item[(b)] $\seg{m}{i}$ is a consecutive sequence of integers, for $i \in \{ 1,2,3 \}$.
\item[(c)] $\#\seg{m}{1} \geq 1$ for $m > 1$.
\item[(d)] If $b\in \seg{m}{3}$ (that is, $b=i_m-1$) then $\alpha_{i_{m-1}-1}\geq \alpha_b$.
\item[(e)] If $b\in \seg{m}{}$ and $\alpha_{i_{m-1}-1} < \alpha_b$, then $b\in \seg{m}{1}$.
\end{enumerate}
\end{lemma}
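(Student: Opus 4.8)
The plan is to derive everything from two inputs: each segment is weakly decreasing, and avoidance of the patterns in ${\sf KM}$ --- most of it just from avoiding $(0,1,2)$, i.e.\ from the absence of a strictly increasing subsequence of length three. Throughout, fix $m$ and abbreviate $s=i_{m-1}$ and $t=i_m$, so $\seg{m}{}=\{s,s+1,\dots,t-1\}$. Since no index strictly between $s$ and $t$ is an ascent top, $\alpha_s\geq\alpha_{s+1}\geq\cdots\geq\alpha_{t-1}$; in particular $\alpha_s=\max_{b\in\seg{m}{}}\alpha_b$ and $\alpha_{t-1}=\min_{b\in\seg{m}{}}\alpha_b$. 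Moreover $\alpha_{s-1}<\alpha_s$ when $m>1$ (ascent at $s$) and $\alpha_{s-1}=\infty$ when $m=1$; and $\alpha_{t-1}<\alpha_t$ when $m\leq k$ (ascent at $t$), while $\alpha_t=0$ when $m=k+1$.

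I would prove (c), (d), (e) first, in that order, each by exhibiting a $(0,1,2)$-pattern. For (c): if $m>1$ and $\seg{m}{1}=\emptyset$ then $\alpha_s<\alpha_t$ --- this holds also when $m=k+1$, since there $\alpha_b\geq1>0=\alpha_t$ --- and then $s-1<s<t$ carries the strictly increasing values $\alpha_{s-1}<\alpha_s<\alpha_t$. For (d): if $b=t-1$ then $\alpha_b=\alpha_{t-1}<\alpha_t$, so $\alpha_{s-1}<\alpha_b$ would make $s-1<t-1<t$ strictly increasing (and there is nothing to prove if $m=1$). Part (e) is then immediate: if $b\in\seg{m}{}$, $\alpha_{s-1}<\alpha_b$, but $b\notin\seg{m}{1}$, i.e.\ $\alpha_b<\alpha_t$, then (d) rules out $b=t-1$, so $s-1<b<t$ is strictly increasing.

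Next (a). Disjointness is immediate from the definitions: $\seg{m}{1}$ requires $\alpha_b\geq\alpha_t$ while $\seg{m}{2}$ forces $\alpha_b<\alpha_t$, and $t-1\in\seg{m}{3}$ lies in neither $\seg{m}{1}$ (as $\alpha_{t-1}<\alpha_t$ when $m\leq k$) nor $\seg{m}{2}$ (which excludes $b=t-1$). For the covering statement: when $m=k+1$ one has $\alpha_t=0\leq\alpha_b$ for all $b$, so $\seg{k+1}{}=\seg{k+1}{1}$ and $\seg{k+1}{2}=\seg{k+1}{3}=\emptyset$; when $m\leq k$, fix $b\in\seg{m}{}$ with $b\neq t-1$, and observe that $\alpha_b\geq\alpha_t$ gives $b\in\seg{m}{1}$, while $\alpha_b<\alpha_t$ together with the contrapositive of (e) gives $\alpha_b\leq\alpha_{s-1}$. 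It then only remains to upgrade this to $\alpha_b<\alpha_{s-1}$, which places $b$ in $\seg{m}{2}$; equivalently, one must rule out $\alpha_b=\alpha_{s-1}$ when $m\geq2$, $s\leq b<t-1$, and $\alpha_b<\alpha_t$. This is the one genuinely delicate case, and I expect it to be the main obstacle. The idea is to adjoin to the indices $s-1$ and $t$ both the largest index $c$ of $\seg{m}{}$ with $\alpha_c\geq\alpha_t$ (it exists by (c) and satisfies $s\leq c<b$) and a suitable index of $\seg{m-1}{}$ --- recalling that $\alpha_{s-1}$ is the \emph{minimum} of $\seg{m-1}{}$ and that $\alpha_s>\alpha_{s-1}$ --- and to check that, under avoidance of ${\sf KM}$ and $\alpha_b\geq1$, some four of these indices would realize a member of ${\sf KM}$ (or another $(0,1,2)$), a contradiction.

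Finally (b) follows from (a): weak decrease of the segment makes $\seg{m}{1}$ an initial run of $\seg{m}{}$ (possibly empty), $\seg{m}{3}$ is empty or the single terminal index $t-1$, and hence by (a) the set $\seg{m}{2}$ is exactly the intervening block; all three are consecutive.
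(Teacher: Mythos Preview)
Your treatment of (b), (c), (d), (e), and of the disjointness in (a), matches the paper's. The paper actually does less work for the covering part of (a): it just observes that $(0,1,2)$-avoidance forbids $\alpha_{s-1}<\alpha_b<\alpha_t$ for $b\in\seg{m}{}$, and declares (a) proved from that alone.

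You are right to flag the boundary case $\alpha_b=\alpha_{s-1}$ with $\alpha_b<\alpha_t$ and $b<t-1$: such a $b$ lies in none of $\seg{m}{1},\seg{m}{2},\seg{m}{3}$ as defined, and the paper's proof does not address it. But your proposed repair, locating a forbidden pattern among nearby indices, cannot work, because this boundary case \emph{does} occur for $\alpha\in{\overline{\sf KM}}_n^{\geq 1}$. Take $\alpha=(1,3,1,1,2)$: one checks directly that it avoids every pattern in ${\sf KM}$; here $i_1=2$, $i_2=5$, so $\seg{2}{}=\{2,3,4\}$, $\alpha_{i_1-1}=1$, $\alpha_{i_2}=2$, whence $\seg{2}{1}=\{2\}$, $\seg{2}{3}=\{4\}$, and $\seg{2}{2}=\{b:\alpha_b<\min(1,2)=1,\ b<4\}=\emptyset$. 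The index $b=3$ belongs to no piece. Your candidate index set $\{s-1,c,b,t\}=\{1,2,3,5\}$ carries values $(1,3,1,2)$, which realize no element of ${\sf KM}$ and contain no $(0,1,2)$. So the statement of (a) is literally false as written; the fix is to the \emph{definition} of $\seg{m}{2}$ (relax the strict inequality $\alpha_b<\alpha_{i_{m-1}-1}$ to $\alpha_b\leq\alpha_{i_{m-1}-1}$), after which both the paper's one-line argument and yours via the contrapositive of (e) go through. The later uses of $\seg{m}{2}$ in the paper only need the weak inequality $\alpha_b\leq\alpha_{i_{m-1}-1}$, so this change is harmless downstream.
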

\begin{proof}
By definition of $\seg{m}{}$, 
$\alpha_{i_{m-1}}\geq\alpha_{i_{m-1}+1}\geq \ldots\geq
\alpha_{i_m-1}$. Thus (b) holds. For the same reason, $\seg{m}{1},\seg{m}{2},\seg{m}{3}$ are disjoint.
Since $\alpha$ avoids $(0,1,2)$, there is no $b\in \seg{m}{}$ such that
$\alpha_{i_{m-1}-1}<\alpha_b<\alpha_{i_m}$. This proves $\seg{m}{} = \seg{m}{1}\sqcup \seg{m}{2}\sqcup \seg{m}{3}$; hence
(a) holds. Next, if $m>1$ and (c) is false, then $\alpha_{i_{m-1}-1}<\alpha_{i_{m-1}}<\alpha_{i_m}$ forms
a $(0,1,2)$ pattern, a contradiction.

If (d) is false then $(\alpha_{i_{m-1}-1}<\alpha_{i_m-1}<\alpha_{i_m})$ is a $(0,1,2)$ pattern, a contradiction. Finally, (e) follows from (d), the definition of $\seg{m}{2}$, and (a).
\end{proof}

\begin{example}
Let 
$\alpha = (10,5,12,9,8,8,4,2,5,1,3)$. 
Then $\alpha\in {\overline {\sf KM}}_{11}^{\geq 1}$, $k=3$,
$i_0:=1, i_1 = 3, i_2 = 9, i_3 = 11, i_4:=12$, and
\begin{center}
$\seg{1}{} = \{ 1,2 \}$, $\seg{2}{} = \{ 3,4,5,6,7,8 \}$, $\seg{3}{} = \{ 9,10 \}$, and $\seg{4}{} = \{ 11 \}$
\end{center}
with
\begin{center}
$\begin{array}{llll}
\qquad & \seg{1}{1} = \{  \},& \seg{1}{2} = \{ 1\},& \textrm{ and }\seg{1}{3} = \{ 2 \} \\[3pt]
& \seg{2}{1} = \{ 3,4,5,6 \},& \seg{2}{2} = \{ 7 \},&\textrm{ and }\seg{2}{3} = \{ 8 \} \\[3pt]
& \seg{3}{1} = \{ 9 \},& \seg{3}{2} = \{ \},&\textrm{ and }\seg{3}{3} = \{ 10 \} \\[3pt]
& \seg{4}{1} = \{ 11 \},& \seg{4}{2} = \{ \},&\textrm{ and }\seg{4}{3} = \{  \}
\end{array}$
\end{center}\qed
\end{example}

In this proof and the sequel, it will be convenient to write, \emph{e.g.,} $(\alpha_a,\alpha_b,\alpha_c,\alpha_d)\simeq (1,0,3,2)$
if the subsequence $(\alpha_a,\alpha_b,\alpha_c,\alpha_d)$ of $\alpha$ forms a $(1,0,3,2)$ pattern.

\begin{lemma}
\label{lemma:1032avoidingConsequence}
Suppose $\alpha \in  {\overline {\sf KM}}_n^{\geq 1}$
and fix $1\leq m \leq k+1$. If $s < i_{m} - 1$ and $r > i_{m}$ then either 
\begin{itemize}
\item $\alpha_s \geq \alpha_r$; or 
\item $\alpha_s < \alpha_r$ with $\alpha_s=\alpha_{i_{m} - 1}$ and $\alpha_{i_m}=\alpha_r=\alpha_{i_{m} - 1}+1$. 
\end{itemize}
\end{lemma}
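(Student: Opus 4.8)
The plan is to argue by contradiction, assuming that $\alpha_s < \alpha_r$ but that the exceptional case described in the second bullet fails, and then to locate a forbidden pattern among the entries $\alpha_s, \alpha_{i_m-1}, \alpha_{i_m}, \alpha_r$ (and perhaps $\alpha_{i_{m}-1}$ or other nearby segment entries). The key structural facts to exploit are: (i) within the $m$-th segment the entries weakly decrease, so $\alpha_s \geq \alpha_{i_m-1}$ since $s < i_m - 1$; (ii) at the segment break, $\alpha_{i_m - 1} < \alpha_{i_m}$ by the definition of the indices $i_1 < \cdots < i_k$; and (iii) $\alpha$ avoids every composition in ${\sf KM}$, in particular $(0,1,2)$, $(1,0,3,2)$, and $(1,0,2,2)$.

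First I would record that $\alpha_s \geq \alpha_{i_m - 1}$, so if $\alpha_s < \alpha_r$ then a fortiori $\alpha_{i_m-1} < \alpha_r$. Combined with $\alpha_{i_m-1} < \alpha_{i_m}$, I have two elements ($\alpha_{i_m}$ at position $i_m$ and $\alpha_r$ at position $r > i_m$) both strictly exceeding $\alpha_{i_m-1}$. Next I would compare $\alpha_s$ with $\alpha_{i_m}$. If $\alpha_s < \alpha_{i_m}$, then since also $\alpha_{i_m-1} \leq \alpha_s < \alpha_{i_m}$ with $\alpha_{i_m - 1} < \alpha_s$ forced (if $\alpha_s = \alpha_{i_m-1}$ we are in a different sub-branch), we would be close to producing a $(0,1,2)$ using $\alpha_{i_m-1}, \alpha_s$... but the order of positions ($s < i_m - 1 < i_m$) must be handled carefully, so I would instead use the triple $(\alpha_s, \alpha_{i_m - 1}, \alpha_{i_m})$ or $(\alpha_{i_m-1}, \alpha_s, \cdot)$ — whichever respects position order — together with $\alpha_r$. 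The natural candidate patterns are: if $\alpha_s < \alpha_{i_m}$ and $\alpha_s > \alpha_{i_m-1}$, then $(\alpha_{i_m-1}, \alpha_s, \alpha_r)$ or $(\alpha_s, \alpha_{i_m}, \alpha_r)$ threatens a $(0,1,2)$; if $\alpha_s \geq \alpha_{i_m}$, then $(\alpha_s, \alpha_{i_m-1}, \alpha_{i_m}, \alpha_r)$ threatens a $(1,0,2,2)$ or $(1,0,3,2)$ pattern depending on the relative gaps. The exceptional second bullet is precisely the one configuration of gaps $\alpha_s = \alpha_{i_m-1}$, $\alpha_{i_m} = \alpha_r = \alpha_{i_m-1}+1$ that slips past all of $(0,1,2)$, $(1,0,3,2)$, $(1,0,2,2)$; a careful case analysis on whether $\alpha_s = \alpha_{i_m-1}$ or $\alpha_s > \alpha_{i_m - 1}$, and on the sizes of $\alpha_{i_m} - \alpha_{i_m-1}$ and $\alpha_r - \alpha_s$, should show every other configuration yields one of the forbidden patterns.

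I expect the main obstacle to be the bookkeeping of the pattern-containment definition: recall that $\alpha$ contains $\beta$ requires both the weak-order condition ($\alpha_{j_s} \leq \alpha_{j_t} \iff \beta_s \leq \beta_t$) and the gap condition ($|\alpha_{j_s} - \alpha_{j_t}| \geq |\beta_s - \beta_t|$). So to rule out, say, $(1,0,3,2)$ I must exhibit four positions with the correct $\leq$-pattern \emph{and} sufficiently large gaps, and conversely to stay in the exceptional case I must verify the gaps are too small to realize $(1,0,3,2)$ or $(1,0,2,2)$. The delicate point is that when $\alpha_s > \alpha_{i_m-1}$ (strict), the quadruple $(\alpha_s, \alpha_{i_m-1}, \alpha_{i_m}, \alpha_r)$ has relative order matching $(1,0,3,2)$ as long as $\alpha_r < \alpha_{i_m}$ or $(1,0,2,3)$-type otherwise — so I will likely need to split on $\alpha_r$ versus $\alpha_{i_m}$ as well, and in the subcase $\alpha_r \geq \alpha_{i_m}$ the relevant forbidden pattern might instead be $(1,0,2,2)$ or require invoking that $\alpha_s \geq \alpha_{i_m-1}+1$ forces enough gap. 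I would organize the proof as a short tree of cases on the two comparisons $\alpha_s \lessgtr \alpha_{i_m}$ and $\alpha_s \lessgtr \alpha_{i_m - 1}$, in each leaf either deriving $\alpha_s \geq \alpha_r$ (contradicting the assumption) or pinning down the exact exceptional gaps, using Lemma~\ref{lemma:segPartsRange} where convenient to control where $s$ and $r$ sit within their segments.
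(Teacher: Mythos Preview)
Your overall strategy---examine the quadruple $(\alpha_s,\alpha_{i_m-1},\alpha_{i_m},\alpha_r)$ under the assumption $\alpha_s<\alpha_r$ and locate a forbidden pattern---is exactly the paper's approach, but there are two concrete gaps in the plan.

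First, your fact (i) is wrongly justified: the hypothesis is only $s<i_m-1$, not $s\in\seg{m}{}$, so $s$ may lie in an earlier segment and you cannot invoke ``entries weakly decrease within $\seg{m}{}$''. The inequality $\alpha_s\geq\alpha_{i_m-1}$ is instead forced by $(0,1,2)$-avoidance: if $\alpha_s<\alpha_{i_m-1}$ then $(\alpha_s,\alpha_{i_m-1},\alpha_{i_m})\simeq(0,1,2)$. Similarly, $(0,1,2)$-avoidance applied to $(\alpha_{i_m-1},\alpha_{i_m},\alpha_r)$ gives $\alpha_{i_m}\geq\alpha_r$, which you should record as well.

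Second, and more seriously, your list of relevant patterns is incomplete. Both $(1,0,3,2)$ and $(1,0,2,2)$ require the first coordinate to strictly exceed the second, so in the subcase $\alpha_s=\alpha_{i_m-1}$ (which your own case split reaches) neither applies to the quadruple, and no $(0,1,2)$ sits inside it either. For instance $\alpha_s=\alpha_{i_m-1}=5$, $\alpha_{i_m}=10$, $\alpha_r=6$ avoids all three of your patterns yet is not the exceptional configuration. What pins the exceptional case down is the remaining two members of ${\sf KM}$: avoidance of $(0,0,2,1)$ forces $\alpha_{i_m}=\alpha_r$, and then avoidance of $(0,0,2,2)$ forces $\alpha_{i_m}-\alpha_{i_m-1}\leq 1$, hence $\alpha_r=\alpha_{i_m-1}+1$. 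With these two additions the argument goes through cleanly and is in fact the paper's proof; no case tree on $\alpha_s$ versus $\alpha_{i_m}$ and no appeal to Lemma~\ref{lemma:segPartsRange} is needed.
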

\begin{proof}
If $\alpha_s \geq \alpha_r$ we are done. Assume $\alpha_s<\alpha_r$.
We have $s < i_{m} - 1 < i_m < r$ with $\alpha_{i_{m} - 1} < \alpha_{i_{m}}$. 
Since $\alpha$ avoids $(0,1,2)$, so must the subsequence $A=(\alpha_s, \alpha_{i_{m} - 1}, \alpha_{i_{m}}, \alpha_r)$. Thus $\alpha_s \geq \alpha_{i_{m} - 1}$ and $\alpha_{i_{m}} \geq \alpha_r$. Since $A\in {\overline{\sf KM}}$, $A\not\simeq (1,0,3,2),(1,0,2,2)$. Hence $\alpha_s=\alpha_{i_{m} - 1}$. Since $A\not\simeq (0,0,2,1)$, $\alpha_{i_m}=\alpha_r$. Finally,
since $A\not\simeq (0,0,2,2)$, $\alpha_r=\alpha_{i_{m-1}}+1$.
\end{proof}

\begin{lemma}\label{lemma:rectangle}
If $D(\alpha)$ contains southwest $s\times t$ rectangle and $T\in {\sf qKT}(\alpha)$ then $T(r,c)=r$ for all $1\leq r\leq s$
and $1\leq c\leq t$.
\end{lemma}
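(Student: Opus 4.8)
Unpacking the hypothesis, the presence of a southwest $s\times t$ rectangle in $D(\alpha)$ means exactly that $(r,c)\in D(\alpha)$ for all $1\le r\le s$ and $1\le c\le t$, i.e.\ $\alpha_r\ge t$ for $1\le r\le s$. The plan is a two-stage argument that does \emph{not} use \qkt{3}, \qkt{4}, or any pattern-avoidance hypothesis: first pin down the first column, then propagate the conclusion rightward column by column. I expect the whole proof to be short and essentially bookkeeping; the only genuinely non-automatic point is explained at the end.

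\textbf{Step 1: the first column and a uniform upper bound.} First I would show $T(r,1)=r$ for $1\le r\le s$ by induction on $r$. For the base case, \qkt{1} says the entry in row $1$ is at most $1$, so $T(1,1)=1$. For the inductive step, \qkt{2} says entries strictly increase going up the first column, so $r-1=T(r-1,1)<T(r,1)$, while \qkt{1} gives $T(r,1)\le r$; hence $T(r,1)=r$. (All the cells $(r,1)$ with $1\le r\le s$ exist because the rectangle contains them.) Next, for any $1\le r\le s$ and $1\le c\le t$, the cells $(r,1),\dots,(r,c)$ all lie in $D(\alpha)$, so \qkt{1} (weak decrease along rows) gives $T(r,c)\le T(r,1)=r$.

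\textbf{Step 2: pinning down an arbitrary column.} Fix $c$ with $1\le c\le t$; I would prove $T(r,c)=r$ for $1\le r\le s$ by induction on $r$. The base case $T(1,c)=1$ follows from \qkt{1} since the entry is at most $1$. For the inductive step, suppose $T(r',c)=r'$ for all $r'<r$. By Step 1 we know $T(r,c)\le r$; if $T(r,c)=i<r$, then $i<r\le s$, so the inductive hypothesis gives $T(i,c)=i$, and now $(i,c)$ and $(r,c)$ are two distinct cells of column $c$ carrying the same entry $i$, contradicting \qkt{2} (distinct entries in each column). Hence $T(r,c)=r$, completing the induction and the proof.

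\textbf{Main obstacle.} There is no serious obstacle, but the point requiring care is that \qkt{2} only asserts \emph{strict increase upward} in the \emph{first} column — in later columns one has distinctness alone. That is why the argument must be staged: Step 1 uses the strict-increase clause to nail the first column, this is transported to the bound $T(r,c)\le r$ across each row via \qkt{1}, and only then does column-distinctness in Step 2 suffice to upgrade the inequality $T(r,c)\le r$ to the equality $T(r,c)=r$.
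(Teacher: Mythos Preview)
Your proof is correct and follows the paper's approach exactly: the paper's proof is the single line ``By \qkt{1} and \qkt{2},'' and you have simply spelled out those details. One minor simplification: your Step~1 detour through the first column is not needed, since the second clause of \qkt{1} already gives $T(r,c)\le r$ directly for every cell, so the distinctness induction of Step~2 alone suffices.
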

\begin{proof}
By \qkt{1} and \qkt{2}.
\end{proof}

\begin{lemma}
\label{lemma:rowfilledrowval}
Suppose $\alpha \in {\overline {\sf KM}}_n^{\geq 1}$. Let $T \in {\sf qKT}(\alpha)$. If
\begin{itemize}
\item[(a)] $b\in \seg{1}{}$, $b\in \seg{m}{1}$ with $\alpha_{i_{m-1} - 1}\geq \alpha_b$, $b \in \seg{m}{2}$, or $b \in \seg{m}{3}$ then row $b$ of $T$ only contains $b$'s.
\item[(b)] $b\in \seg{m}{1}$ with $\alpha_{i_{m-1} - 1}<\alpha_b$ then the leftmost $\alpha_{i_{m-1} - 1} + 1$ columns of row $b$ only contain~$b$'s.
\end{itemize}
\end{lemma}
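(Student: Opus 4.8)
The plan is to prove parts~(a) and~(b) together, by strong induction on the row index $b$, with everything reduced to Lemma~\ref{lemma:rectangle} and \qkt{4}. The key preliminary I would establish is a ``rectangle bound.'' Chaining Lemma~\ref{lemma:segPartsRange}(d) over $m$ gives $\infty=\alpha_{i_0-1}\geq\alpha_{i_1-1}\geq\cdots\geq\alpha_{i_k-1}$, and since $\alpha$ weakly decreases along each segment (so the last row of a segment is its shortest), it follows that when $b$ lies in the $m$-th segment, every one of the rows $1,2,\ldots,b$ has length at least $q:=\min(\alpha_b,\alpha_{i_{m-1}-1})$. Hence $D(\alpha)$ contains the southwest $b\times q$ rectangle, and Lemma~\ref{lemma:rectangle} gives $T(b,c)=b$ for all $c\leq q$ (taking $q=1$ also recovers that the first column of $T$ is standard). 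Part~(a) is then immediate, with no induction: in each of the four listed situations one has $\alpha_b\leq\alpha_{i_{m-1}-1}$ — trivially when $b\in\seg{1}{}$ (as $\alpha_{i_0-1}=\infty$), by hypothesis when $b\in\seg{m}{1}$ with $\alpha_{i_{m-1}-1}\geq\alpha_b$, by definition when $b\in\seg{m}{2}$, and by Lemma~\ref{lemma:segPartsRange}(d) when $b\in\seg{m}{3}$ — so $q=\alpha_b$ and all of row $b$ is forced to consist of $b$'s.

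For part~(b), set $p:=\alpha_{i_{m-1}-1}$, so $b\in\seg{m}{1}$ with $p<\alpha_b$; necessarily $m\geq 2$ and $i_{m-1}-1\in\seg{m-1}{3}$. Now $q=p$, so the rectangle bound already gives $T(b,c)=b$ for $c\leq p$, and it remains to show $T(b,p+1)=b$ (note $(b,p+1)\in D(\alpha)$ since $\alpha_b>p$, and $T(b,p+1)\leq T(b,p)=b$ by \qkt{1}). Suppose for contradiction that $T(b,p+1)=e<b$. Every row $r$ with $i_{m-1}\leq r\leq b-1$ lies in the $m$-th segment and in $\seg{m}{1}$ — which is an initial sub-block of that segment by Lemma~\ref{lemma:segPartsRange} — with $\alpha_r\geq\alpha_b>p$; so part~(b) of the induction hypothesis applies to $r$ with the \emph{same} bound, forcing $T(r,p+1)=r$. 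As $e<b$, distinctness of the entries in column $p+1$ (\qkt{2}) then forces $e\leq i_{m-1}-1$. On the other hand, \qkt{4} applied with $r=i_{m-1}-1<s=b$, $\alpha_r=p<\alpha_b$, and $c=p$ (so $(r,p),(s,p+1)\in D(\alpha)$) gives $T(i_{m-1}-1,p)<T(b,p+1)=e$; and part~(a) of the induction hypothesis, applied to the row $i_{m-1}-1\in\seg{m-1}{3}$, says this row consists entirely of $i_{m-1}-1$'s, so $i_{m-1}-1<e$. This contradicts $e\leq i_{m-1}-1$, so $e=b$ and the induction is complete.

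The rectangle-bound bookkeeping is routine. The one delicate point, which I expect to be the main obstacle, is the inductive step of part~(b): one must verify that the rows $i_{m-1},\ldots,b-1$ genuinely fall under part~(b) of the lemma with the \emph{same} threshold $p+1$ — this uses that $\seg{m}{1}$ is an initial sub-block of the $m$-th segment and that $\alpha_{i_{m-1}-1}$ is the ``previous-segment top'' common to all rows of that segment — while at the same time the row $i_{m-1}-1$ falls under part~(a). Once the induction is organized by row index with (a) and (b) proved in tandem, this is the only thing to check; beyond it, the argument uses only \qkt{1}, \qkt{2}, \qkt{4}, Lemma~\ref{lemma:segPartsRange}, and Lemma~\ref{lemma:rectangle}.
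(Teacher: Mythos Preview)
Your proof is correct and essentially the same as the paper's: both establish a southwest $b\times\min(\alpha_b,\alpha_{i_{m-1}-1})$ rectangle in $D(\alpha)$ and invoke Lemma~\ref{lemma:rectangle}, then for part~(b) use \qkt{4} together with \qkt{1} and \qkt{2} to force the entries in column $\alpha_{i_{m-1}-1}+1$. The only difference is packaging: the paper proves each part directly without a global induction on $b$ (running the \qkt{1}/\qkt{2}/\qkt{4} argument up column $p+1$ from row $i_{m-1}$ to row $b$), whereas you organize the same steps as strong induction on $b$; in particular your appeal to the inductive hypothesis at row $i_{m-1}-1$ is not needed, since $T(i_{m-1}-1,p)=i_{m-1}-1$ already follows from Lemma~\ref{lemma:rectangle} applied to the full $b\times p$ rectangle.
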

\begin{proof}
(a): First suppose $b\in \seg{1}{}$.
Row $1$ of $T$ must only contain $1$'s by \qkt{1}. If $2 \in \seg{1}{}$ then $\alpha_1 \geq \alpha_2$, so by \qkt{1} and \qkt{2} row $2$ of $T$ must only contain $2$'s. The same holds for all rows in $\seg{1}{}$, by induction. 

Now suppose we satisfy one of the other possibilities of (a). Since $b\in \seg{m}{}$,
\begin{equation}
\label{eqn:June18dee}
\alpha_r\geq \alpha_b, \ i_{m-1}\leq r\leq b.
\end{equation}
Since $\alpha$ avoids $(0,1,2)$,
\begin{equation}
\label{eqn:June18yyyy}
\alpha_r\geq \alpha_{i_{m-1}-1},  \ 1\leq r\leq i_{m-1}-1.
\end{equation}
By the hypothesis (if $b\in \seg{m}{1}$), the 
definition of $\seg{m}{2}$, or Lemma~\ref{lemma:segPartsRange}(d) (if $b\in\seg{m}{3}$),
\begin{equation}
\label{eqn:June18ywyw}
\alpha_{i_{m-1} - 1}\geq \alpha_b
\end{equation}
By (\ref{eqn:June18dee}), and by (\ref{eqn:June18yyyy}) combined with (\ref{eqn:June18ywyw}), we conclude
that $\alpha_r\geq \alpha_b$ for all $1\leq r\leq b$. Now apply Lemma~\ref{lemma:rectangle} to this $b\times \alpha_b$
southwest rectangle in $D(\alpha)$.

(b): By (\ref{eqn:June18yyyy}), the hypothesis $\alpha_{i_{m-1} - 1}<\alpha_b$, and 
(\ref{eqn:June18dee}),
\begin{equation}
\alpha_r\geq \alpha_{i_{m-1}-1}, \ 1\leq r\leq b.
\end{equation}
This implies there is a southwest $b\times \alpha_{i_{m-1}-1}$ rectangle in $D(\alpha)$. Hence by Lemma~\ref{lemma:rectangle}, $T(r,c)=r$ for $1\leq r\leq b$ and $1\leq c\leq \alpha_{i_{m-1}-1}$. Since 
$1\leq \alpha_{i_{m-1} - 1}<\alpha_s$
for $i_{m-1} - 1 < s \leq b$, we are done by \qkt{1}, \qkt{2} and \qkt{4}. 
\end{proof}

\begin{example}\label{exa:June17zzz}
Let $\alpha = (10,5,12,9,8,8,4,2,5,1,3)$. Figure~\ref{fig:June17xdd} shows the forced entries for quasi-key tableau in ${\sf qKT}(\alpha)$.\qed
\end{example}
\begin{figure}
\begin{center}
\ytableausetup{boxsize=1.4em}
\begin{ytableau}
11 & 11 & \; \\
10 \\
9 & 9 & 9 & \; & \; \\
8 & 8 \\
7 & 7 & 7 & 7 \\
6 & 6 & 6 & 6 & 6 & 6 & \; & \; \\
5 & 5 & 5 & 5 & 5 & 5 & \; & \;  \\
4 & 4 & 4 & 4 & 4 & 4 & \; & \; & \;  \\
3 & 3 & 3 & 3 & 3 & 3 & \; & \; & \; & \; & \; & \; \\
2 & 2 & 2 & 2 & 2  \\
1 & 1 & 1 & 1 & 1 & 1 & 1 & 1 & 1 & 1
\end{ytableau}
\end{center}
\caption{Forced entries of the quasi-key tableaux for Example~\ref{exa:June17zzz} \label{fig:June17xdd}}
\end{figure}

\begin{lemma}
\label{lemma:seesawLemma}
Suppose $\alpha \in {\overline {\sf KM}}_n^{\geq 1}$. Let $T \in {\sf qKT}(\alpha)$. Let ${\sf y}$ be a box such that ${\sf row}({\sf y}) > i_{m-1}$. Then $T({\sf y}) \geq i_{m-1}-1$.
\end{lemma}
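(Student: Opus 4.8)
The plan is to argue by downward induction on $i_{m-1}$, or equivalently on $m$, tracking how low an entry can sink as one moves rightward and downward in $T$. The base case is $m=1$, i.e. $i_0 = 1$: here the claim is simply $T(\mathsf{y}) \geq 0$, which is vacuous, so we may assume $m \geq 2$. Fix a box $\mathsf{y}$ with $\mathsf{row}(\mathsf{y}) = r > i_{m-1}$ and write $c = \mathsf{col}(\mathsf{y})$. If $r$ also lies in a segment $\seg{m'}{}$ with $m' > m-1$, we would like to reduce to a box one segment up; the key point is that by Lemma~\ref{lemma:segPartsRange}(c), $\#\seg{m'}{1} \geq 1$ for $m' > 1$, so the row $i_{m'-1}$ of $T$ (the bottom row of $\seg{m'}{}$) contains a box, and one can climb from $\mathsf{y}$ down the column until meeting either the first column or a shorter row, invoking \qkt{2} (distinctness and increase-upward in the first column) and \qkt{1} at each step.

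First I would reduce to the case that $r$ is the bottom index of its own segment, that is $r = i_{m'-1}$ for the segment $\seg{m'}{}$ containing $r$; if $r$ is strictly interior to its segment, then $\alpha_{r} \le \alpha_{i_{m'-1}}$ and the box $(i_{m'-1}, c)$ lies in $D(\alpha)$ whenever $(r,c)$ does, and a column argument via \qkt{2} gives $T(r,c) \geq T(i_{m'-1}, c) \geq \cdots$, pushing the problem to a lower row. So assume $r = i_{m'-1}$ with $m' \ge m$ (if $m' = m$ we want $T(\mathsf y) \ge i_{m-1}-1 = r - 1$; if $m' > m$ we want something weaker). The heart of the matter is the single-segment step: given that every box in rows $> i_{m'-2}$ has entry $\ge i_{m'-2}-1$ (inductive hypothesis with $m-1$ replaced by $m'-1$), deduce that every box in rows $> i_{m'-1}$ has entry $\ge i_{m'-1}-1$. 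For this I would look at the box $\mathsf y$ in row $i_{m'-1}$: by Lemma~\ref{lemma:rowfilledrowval}, some initial segment of that row is filled with $i_{m'-1}$'s (at least one box, since $i_{m'-1} \in \seg{m'}{}$ and $\alpha_{i_{m'-1}} \ge 1$), and for columns further right, \qkt{1} forces entries $\le i_{m'-1}$; the lower bound $\ge i_{m'-1}-1$ on those rightward entries should follow by combining \qkt{3} with the fact that the box directly below, in some row of $\seg{m'-1}{}$ or lower, already has entry $\ge i_{m'-2}-1$ and distinctness forces a jump.

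The main obstacle I expect is precisely the bookkeeping in that last step: controlling an entry $T(\mathsf y)$ with $\mathsf y$ far to the right in row $i_{m'-1}$, where Lemma~\ref{lemma:rowfilledrowval} no longer forces the value. One has to chase the column below $\mathsf y$, and the length of that column and the segment structure beneath it interact in a way that uses \emph{all} the avoidance hypotheses — this is where Lemma~\ref{lemma:1032avoidingConsequence} is likely needed, to guarantee that a short row below $\mathsf y$ cannot drop the available entries too far. A secondary subtlety is handling the case where $\mathsf y$'s column, read downward, never reaches the first column before the diagram runs out (the row lengths are not monotone), so the "increase upward in the first column" clause of \qkt{2} cannot be used directly; here one falls back on \qkt{3} to propagate the inequality across the jog in the column. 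I would organize the write-up so that the single-segment step is isolated as the one genuinely new computation, with the reduction to $r = i_{m'-1}$ and the induction on $m$ being routine.
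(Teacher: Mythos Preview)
Your plan has a real gap at the very first reduction step. You write that if $r$ lies strictly inside its segment $\seg{m'}{}$, then since $\alpha_r \le \alpha_{i_{m'-1}}$ the box $(i_{m'-1},c)$ exists and ``a column argument via \qkt{2} gives $T(r,c)\ge T(i_{m'-1},c)$''. But \qkt{2} only asserts distinctness of entries in a column, and monotonicity \emph{in the first column only}. In columns $c>1$ there is no monotonicity statement: entries in a fixed column of a quasi-key tableau can go up and down as you read them. So you cannot push the problem from row $r$ down to row $i_{m'-1}$ this way, and the reduction to $r=i_{m'-1}$ is unjustified. The same issue resurfaces in your single-segment step: you say the bound ``should follow by combining \qkt{3} with the fact that the box directly below \dots\ has entry $\ge i_{m'-2}-1$'', but you never give the actual argument, and in fact \qkt{3} goes the wrong way for this purpose (it constrains the entry to the \emph{right} of a large entry below a small one, not the entry \emph{below}). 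As you yourself note, this is ``the main obstacle'', and the proposal does not overcome it.

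The paper's argument is structurally different and avoids segment-by-segment induction altogether. It assumes for contradiction that $T(\mathsf{y}) < i_{m-1}-1$ and then looks \emph{downward} at rows $1,\ldots,T(\mathsf{y})$. In the easy case all these rows are at least as long as row ${\sf row}(\mathsf{y})$, so Lemma~\ref{lemma:rectangle} forces $T(T(\mathsf{y}),{\sf col}(\mathsf{y}))=T(\mathsf{y})$, a repeat in the column contradicting \qkt{2}. In the remaining case some row $s\le T(\mathsf{y})$ is shorter than row ${\sf row}(\mathsf{y})$; here Lemma~\ref{lemma:1032avoidingConsequence} pins down that such rows are exactly one shorter, and then a careful use of \qkt{4} (across the jog from column $\alpha_{{\sf row}(\mathsf y)}-1$ to column $\alpha_{{\sf row}(\mathsf y)}$) together with \qkt{1}, \qkt{2} forces $T({\sf row}(\mathsf y),\alpha_{{\sf row}(\mathsf y)})>T(\mathsf y)$, contradicting \qkt{1}. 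The key idea you are missing is precisely this: rather than climbing up through segments, look at the southwest rectangle determined by the small label $T(\mathsf y)$ and use \qkt{4} at the one column where that rectangle can fail to be full.
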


\begin{proof}
To reach a contradiction, suppose 
\begin{equation}
\label{eqn:July8aop}
T({\sf y})< i_{m-1}-1.
\end{equation}
\noindent \textit{Case 1:} ($\alpha_s \geq \alpha_{{\sf row}({\sf y})}$ for $1 \leq s \leq T({\sf y})$) By Lemma~\ref{lemma:rectangle}, $T(s,c)=s$ for all $1 \leq s \leq T({\sf y})$ and $1 \leq c \leq \alpha_{{\sf row}({\sf y})}$. Since ${\sf col}({\sf y}) \leq \alpha_{{\sf row}({\sf y})}$, $T(T({\sf y}), {\sf col}({\sf y}))=T({\sf y})$. This, with \eqref{eqn:July8aop}, and the hypothesis ${\sf row}({\sf y})>i_{m-1}$, shows the label $T({\sf y})$ occurs twice in ${\sf col}({\sf y})$, contradicting $\qkt{2}$.

\noindent \textit{Case 2:} ($\alpha_s < \alpha_{{\sf row}({\sf y})}$ for some $1 \leq s \leq T({\sf y})$) 
Lemma \ref{lemma:1032avoidingConsequence} (applied to $i_{m-1}$, $r={\sf row}({\sf y})$) shows that for any $1 \leq s \leq T({\sf y})$ such that $\alpha_s < \alpha_{{\sf row}({\sf y})}$,
$\alpha_{{\sf row}({\sf y})} = \alpha_{i_{m-1}} = \alpha_{i_{m-1}-1} + 1 = \alpha_s + 1$. So,
\begin{equation}
\label{eqn:July8rft}
\alpha_s \geq \alpha_{{\sf row}({\sf y})} - 1\text{ for all $1 \leq s \leq T({\sf y})$.}
\end{equation}
Hence Lemma~\ref{lemma:rectangle} shows 
\begin{equation}
\label{eqn:July8bvc}
T(s,c)=s \text{ for all $1 \leq s \leq T({\sf y})$ and $1 \leq c \leq \alpha_{{\sf row}({\sf y})} - 1$.}
\end{equation}
Let 
\[t=\max_{s}\{1 \leq s \leq T({\sf y}), \alpha_s < \alpha_{{\sf row}({\sf y})}\};\]
$t$ is finite by this case's assumption. 
By \eqref{eqn:July8rft} (and the case assumption), $\alpha_{{\sf row}({\sf y})} - 1 = \alpha_t$. Therefore,
 by the maximality of $t$, 
\begin{equation}
\label{eqn:July8ugh}
\alpha_{{\sf row}({\sf y})} - 1 = \alpha_t < \alpha_u, \text{\  for  $t < u \leq T({\sf y})$.}
\end{equation}
Thus \eqref{eqn:July8bvc}, \eqref{eqn:July8ugh} and $\qkt{4}$ imply 
$t = T(t,\alpha_{{\sf row}({\sf y})}-1) < T(u,\alpha_{{\sf row}({\sf y})})$, for  $t < u \leq T({\sf y})$. Hence, by inductively applying $\qkt{1}$ and $\qkt{2}$ we conclude 
\begin{equation}
\label{eqn:July8fin}
T(u,\alpha_{{\sf row}({\sf y})})=u, \text{\ for  $t < u \leq T({\sf y})$.}
\end{equation}
Finally, by the definition of $t$, $\alpha_t < \alpha_{{\sf row}({\sf y})}$. So \eqref{eqn:July8bvc} and $\qkt{4}$ imply 
\begin{equation}
\label{eqn:July8qwerty}
t = T(t,\alpha_{{\sf row}({\sf y})}-1) < T({\sf row}({\sf y}),\alpha_{{\sf row}({\sf y})}).
\end{equation}
However, by \eqref{eqn:July8fin} we have $t+1=T(t+1,\alpha_{{\sf row}({\sf y})})$. Hence by (\ref{eqn:July8qwerty})
and \qkt{2}, $t+1<T({\sf row}({\sf y}),\alpha_{{\sf row}({\sf y})})$. Repeating this argument replacing $t+1$ successively with  $t+2,t+3,\ldots, T({\sf y})$
in (\ref{eqn:July8fin}) we arrive at $T({\sf y}) < T({\sf row}({\sf y}),\alpha_{{\sf row}({\sf y})})$; this
contradicts $\qkt{1}$.
\end{proof}

\begin{lemma}
\label{lemma:uniqueSmaller}
Suppose $\alpha \in {\overline {\sf KM}}_n^{\geq 1}$. Let $T \in {\sf qKT}(\alpha)$, $b \in \seg{m}{}$ for $1 < m \leq k+1$ with $\alpha_{i_{m-1}-1} < \alpha_b$, and $c > \alpha_{i_{m-1} -1}+1$. Then
\[\#\{{\sf x}\in D(\alpha):b\leq {\sf row}({\sf x}), {\sf col}({\sf x}) = c, T(x)<b\}\leq 1.\]
\end{lemma}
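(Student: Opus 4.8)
The plan is to argue by contradiction: suppose there are two distinct boxes $\sf x_1$ and $\sf x_2$ in column $c$, with $b \leq {\sf row}({\sf x}_1) < {\sf row}({\sf x}_2)$, such that $T({\sf x}_1) < b$ and $T({\sf x}_2) < b$. First I would record the structural facts that column $c$ exists: since $\alpha_b > \alpha_{i_{m-1}-1}$, Lemma~\ref{lemma:segPartsRange}(e) gives $b \in \seg{m}{1}$, and by Lemma~\ref{lemma:rowfilledrowval}(b) the leftmost $\alpha_{i_{m-1}-1}+1$ columns of row $b$ (and of every row below it through row $1$, by the same rectangle argument) are forced to hold their own row index. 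The hypothesis $c > \alpha_{i_{m-1}-1}+1$ says we have moved strictly past this forced region, so the entries $T({\sf x}_1), T({\sf x}_2)$ are genuinely constrained only by \qkt{1}--\qkt{4}.

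The core of the argument is a \qkt{3} / \qkt{2} squeeze. Let $p = T({\sf x}_1)$ and $q = T({\sf x}_2)$ be the two small labels ($p,q < b$), sitting in column $c$ with the box carrying $p$ strictly above the box carrying $q$ (if $p$ and $q$ are in the opposite vertical order I relabel; the point is that in column $c$ the smaller-indexed-row box is higher). Actually the cleaner route: both boxes lie weakly below row $b$, and row $b$ itself in column $c$ — if $(b,c)\in D(\alpha)$ — carries some entry $T(b,c)$. I would first show $T({\sf x})\geq b-1$ is \emph{not} available directly, so instead I use Lemma~\ref{lemma:seesawLemma}: any box $\sf y$ with ${\sf row}({\sf y}) > i_{m-1}$ has $T({\sf y}) \geq i_{m-1}-1$; since $b \in \seg{m}{}$ with $b \geq i_{m-1}$ and in fact (as $b\in\seg m1$, $m>1$) ${\sf row}({\sf x}_i)\geq b > i_{m-1}$ when $b>i_{m-1}$, or handle $b=i_{m-1}$ separately, we get $T({\sf x}_1), T({\sf x}_2) \geq i_{m-1}-1$, so both small labels lie in the narrow band $\{i_{m-1}-1, i_{m-1}, \ldots, b-1\}$. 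Now I invoke \qkt{1}: an entry $v$ can only appear in rows $\geq v$, and \qkt{2}: entries in a column are distinct and the first column increases upward. The decisive step is \qkt{3}: if label $i$ sits above label $k$ in column $c$ with $i < k$, there must be $j > i$ immediately right of that $k$. Applying this repeatedly to the chain of entries in column $c$ between ${\sf x}_1$ and ${\sf x}_2$, together with the forced-$b$'s in rows $\geq b$ of the earlier columns and the pattern-avoidance constraint from Lemma~\ref{lemma:1032avoidingConsequence}, will force a third entry in column $c$ (or in column $c+1$, then pulled back via \qkt{4}) that is simultaneously $<b$ and $>$ both $p$ and $q$, while column-distinctness \qkt{2} caps how many distinct values $< b$ that are $\geq i_{m-1}-1$ can coexist with the weak-decrease constraint \qkt{1} in adjacent columns.

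More concretely, here is the shape of the contradiction I expect to extract: because row $b$ and all rows below it are ``rectangle-filled'' in columns $\leq \alpha_{i_{m-1}-1}+1$, any box in column $c$ in a row $\geq b$ with a small label $<b$ must, by \qkt{3} applied with the box of label $b$ (or $b-1$) that lies above it in some nearby configuration, demand an immediate right-neighbor exceeding that small label; iterating this down the column from ${\sf x}_1$ to ${\sf x}_2$ produces a strictly increasing sequence of ``right-neighbor'' labels, but these neighbors live in column $c+1$ and are bounded above by their row indices via \qkt{1}, and bounded below by $i_{m-1}-1$ via Lemma~\ref{lemma:seesawLemma} — and \qkt{4} applied to the pair of rows (with $\alpha$ strictly increasing across the relevant indices, which holds inside a segment's $\seg{}{1}$ part) forces these column-$(c+1)$ entries to be strictly comparable in a way incompatible with both being squeezed into the band. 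Counting: the band has $b - (i_{m-1}-1) = b - i_{m-1} + 1$ possible values, but the number of rows between row $b$ and the bottom of column $c$ carrying forced $b$'s eats up the room, leaving strictly fewer than two slots.

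\textbf{Main obstacle.} The hard part will be bookkeeping the interaction of \qkt{3} with \qkt{4}: \qkt{3} produces a constraint on column $c+1$ from a vertical inversion in column $c$, while \qkt{4} compares $(r,c)$ with $(s,c+1)$ only when $r<s$ and $\alpha_r < \alpha_s$, so I must verify the segment hypothesis ($b$ and the rows below lying in the $\seg{}{1}$ part, where $\alpha$ is weakly decreasing) does \emph{not} supply the strict inequality $\alpha_r<\alpha_s$ needed, forcing me instead to play \qkt{3} against rows \emph{above} $b$ where $\alpha$ may increase — and to check, via Lemma~\ref{lemma:1032avoidingConsequence}, that the ${\overline{\sf KM}}$-avoidance rules out the one bad exceptional configuration ($\alpha_s = \alpha_{i_m-1}$, $\alpha_{i_m} = \alpha_s+1$). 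I expect the proof to split into the case $b = i_{m-1}$ (where $\seg{m}{}$ starts and the seesaw lemma applies with room to spare) and $b > i_{m-1}$ (where the rectangle from Lemma~\ref{lemma:rowfilledrowval}(b) does most of the work), with the genuinely delicate estimate occurring only in a narrow sub-case where $\alpha_b = \alpha_{i_{m-1}-1}+1$ exactly.
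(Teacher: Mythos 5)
Your plan takes a genuinely different route from the paper, and as outlined it has a gap that I don't think can be patched without essentially rediscovering the paper's argument.

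The central problem is the final ``counting'' step. You place both small labels in the band $\{i_{m-1}-1,\ldots,b-1\}$ via Lemma~\ref{lemma:seesawLemma}, and then assert that forced $b$'s ``eat up the room, leaving strictly fewer than two slots.'' But the forced entries from Lemma~\ref{lemma:rowfilledrowval}(b) occupy columns $\leq \alpha_{i_{m-1}-1}+1$, while $c$ is strictly to the right of them; they impose no cap on how many of the $b-i_{m-1}+1$ band values can appear in column $c$ above row $b$. There is no a priori shortage. Similarly, the \qkt{3} iteration is not guaranteed to run: \qkt{3} only fires on a vertical pair with the smaller label on top, and if the two small labels happen to decrease going up the column, you get no constraint on column $c+1$ at all. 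So the ``increasing sequence of right-neighbor labels'' you hope to build need not exist, and even when it does, you have not identified what it would overflow.

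The paper's proof never touches \qkt{3} or Lemma~\ref{lemma:seesawLemma}. The decisive move, which your plan is missing, is to read off $\alpha$-values of \emph{rows} rather than reasoning about the labels themselves. Concretely: supposing boxes in rows $r<r'$ (both $\geq b$) of column $c$ carry labels $<b$, one first shows via $(0,1,2)$-avoidance and Lemma~\ref{lemma:rowfilledrowval}(a) that $\alpha_r\geq\alpha_{r'}\geq c$. Next, a pigeonhole on column $c$ (using only \qkt{1} and \qkt{2}: the map $s\mapsto T(s,c)$ on rows $<b$ with boxes is injective into $[1,b-1]$ and misses both small labels, so at least two rows below $b$ have no box in column $c$) produces two rows $s<s'<b$ with $\alpha_s,\alpha_{s'}<c$. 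Now $(\alpha_s,\alpha_{s'},\alpha_r,\alpha_{r'})$ has its last two entries strictly exceeding its first two, and KM-avoidance forces $\alpha_s=\alpha_{s'}$ and $\alpha_r=\alpha_{r'}=\alpha_s+1$. Taking $s$ or $s'$ to be $i_{m-1}-1$ (which qualifies since $\alpha_{i_{m-1}-1}<\alpha_b\leq c$ — note $\alpha_b\geq\alpha_{r'}\geq c$ fails, rather $\alpha_{i_{m-1}-1}+1<c$ is the hypothesis) gives $c\leq\alpha_r=\alpha_{i_{m-1}-1}+1$, contradicting $c>\alpha_{i_{m-1}-1}+1$. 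The pattern-avoidance hypothesis is doing the real work here, applied to a single carefully chosen $4$-tuple; your plan uses it only as a ``check in an exceptional sub-case,'' which inverts the actual logical weight.
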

\begin{proof}
Suppose there were two rows 
\begin{equation}
\label{eqn:June19ppp}
b\leq r<r'  \text{\  such that $T(r',c), T(r,c)<b$.}
\end{equation}
By hypothesis, $\alpha_{i_{m-1}-1}<\alpha_b$. 
Thus, if $\alpha_b<\alpha_r$ then $(\alpha_{i_{m-1}-1},\alpha_b,\alpha_r)\simeq (0,1,2)$, contradicting
$\alpha \in {\overline {\sf KM}}_n^{\geq 1}$. Hence $\alpha_b\geq \alpha_r$. Suppose $\alpha_r<\alpha_{r'}$. Now $r \in \seg{f}{}$ for some $f \geq m$. If $\alpha_{i_{f-1} -1} \geq \alpha_r$, then Lemma~\ref{lemma:rowfilledrowval}(a) would imply row $r$ contains only $r$'s. Since this is not the case by \eqref{eqn:June19ppp}, it must be that $\alpha_{i_{f-1} -1} < \alpha_r$. Thus
$(\alpha_{i_{f-1} -1}, \alpha_r, \alpha_{r'})$ is a $(0,1,2)$ pattern, a contradiction. Therefore, 
\begin{equation}
\label{eqn:June19ggg}
\alpha_r \geq \alpha_{r'} \geq c
\end{equation} 
(where the latter inequality is by
(\ref{eqn:June19ppp})). By (\ref{eqn:June19ppp}) together with \qkt{1} and \qkt{2}, there exists 
two rows $s < s' < r$ with 
\begin{equation}
\label{eqn:June19jjj}
\alpha_s,\alpha_{s'} < c.
\end{equation} 
Since $(\alpha_s, \alpha_{s'}, \alpha_r, \alpha_{r'}) \in {\overline {\sf KM}}_4$ then it follows straightforwardly from (\ref{eqn:June19ggg}) and (\ref{eqn:June19jjj}) that 
\begin{equation}
\label{eqn:June19xyp}
\alpha_s=\alpha_{s'} \text{\ and  $\alpha_r=\alpha_{r'}=\alpha_s +1$.}
\end{equation}
In fact (\ref{eqn:June19xyp}) holds for any $s<s'<r$ satisfying (\ref{eqn:June19jjj}). In particular, by hypothesis
$\alpha_{i_{m-1}-1}<c$. Hence, there is at least one pair $s, s'$ satisfying (\ref{eqn:June19jjj}) with either $s = i_{m-1}-1$ or $s' = i_{m-1}-1$.
Then by
(\ref{eqn:June19ggg}) and 
(\ref{eqn:June19xyp}), $c\leq \alpha_r=\alpha_{i_{m-1}-1}+1$, contradicting the hypothesis on $c$.
\end{proof}

\subsection{Proof of Theorem~\ref{thm:qksummary}} The next two proposition immediately
give Theorem~\ref{thm:qksummary}.

\begin{proposition}
\label{prop:quasiKeyMultFree}
If $\alpha \in {\overline {\sf KM}}_n^{\geq 1}$, then $\mathfrak{D}_\alpha$ is multiplicity-free.
\end{proposition}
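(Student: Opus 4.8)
The plan is to show that every quasi-key tableau $T \in {\sf qKT}(\alpha)$ is determined by its weight $\wt(T)$, which immediately gives multiplicity-freeness of $\mathfrak{D}_\alpha$. The strategy is to analyze $T$ segment by segment, using the lemmas of Section~\ref{sec:6.2} to pin down which rows are ``rigid'' (forced to contain only their row index) and which rows have genuine freedom. By Lemma~\ref{lemma:rowfilledrowval}(a) the only rows that can possibly contain an entry different from the row index are those $b \in \seg{m}{1}$ with $\alpha_{i_{m-1}-1} < \alpha_b$ (for $m > 1$), and even for those, Lemma~\ref{lemma:rowfilledrowval}(b) forces the first $\alpha_{i_{m-1}-1}+1$ columns to be $b$'s. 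So the action happens only in the columns $c > \alpha_{i_{m-1}-1}+1$ of such rows.

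Next I would fix a segment $\seg{m}{}$ (with $m>1$) and a column $c > \alpha_{i_{m-1}-1}+1$ that meets row $i_{m-1}$ (the shortest ``long'' row of that block, by Lemma~\ref{lemma:segPartsRange}), and examine the entries of $T$ in that column among the rows $b \leq {\sf row} \leq $ (top of the relevant block). Lemma~\ref{lemma:uniqueSmaller} says at most one of these entries is strictly less than $b$; Lemma~\ref{lemma:seesawLemma} says every such entry is at least $i_{m-1}-1$. Combined with \qkt{2} (distinct entries in a column, increasing upward in column $1$) and \qkt{1} (row $i$ entries at most $i$, weakly decreasing along rows), this should squeeze the set of entries appearing in column $c$ down to something like $\{i_{m-1}-1\} \cup \{$ a consecutive run of row indices $\}$ or an honest run of row indices, with the only ambiguity being whether the ``small'' entry $i_{m-1}-1$ is present and in which row it sits. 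I would then argue that the number of $i_{m-1}-1$'s appearing anywhere in $T$ (i.e.\ the coordinate $\wt(T)_{i_{m-1}-1}$), together with the shape $\alpha$, forces the exact placement: the small entries can only appear in a predictable ``staircase'' pattern, so knowing how many there are determines where they are. The monotonicity conditions \qkt{1}--\qkt{4} then propagate this to a complete reconstruction of $T$ from $\wt(T)$.

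More concretely, for each fixed pair $(m,c)$ I expect that the column restricted to the non-rigid rows of the block is, in every valid $T$, a down-set: below some threshold row it carries the ``small'' value $i_{m-1}-1$ (or is simply $r$ in each row $r$), and above it each row $r$ carries $r$; and \qkt{3} together with \qkt{4} rules out any other configuration. Assembling these column descriptions across all $c$ in the block, the whole block of $T$ is governed by a single integer parameter per block (how far up the small values reach), and that parameter is read off from $\wt(T)$. Since distinct blocks involve disjoint sets of ``small'' row-indices $i_{m-1}-1$ (consecutive segments are indexed by distinct $i_{m-1}$), the parameters are independently recoverable from the weight vector. Hence $T \mapsto \wt(T)$ is injective on ${\sf qKT}(\alpha)$, and $\mathfrak{D}_\alpha$ is multiplicity-free.

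The main obstacle I anticipate is the bookkeeping in the middle step: correctly characterizing, for a fixed column $c$, exactly which multiset of values can appear in it across all quasi-key tableaux, and verifying that \qkt{3} (the condition about an entry $i$ above $k$ forcing a larger $j$ immediately right of $k$) does not permit some unexpected ``two small values in different columns but staggered'' configuration that would break injectivity. This is where Lemma~\ref{lemma:uniqueSmaller} (one small entry per column) and Lemma~\ref{lemma:seesawLemma} (lower bound $i_{m-1}-1$ on entries) must be combined carefully with the consecutiveness statements of Lemma~\ref{lemma:segPartsRange}. Once that local picture is nailed down, the global reconstruction from the weight is essentially a counting argument and should go through routinely.
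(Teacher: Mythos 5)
The top-level aim---showing the map $T \mapsto {\sf wt}(T)$ is injective on ${\sf qKT}(\alpha)$---is exactly what the paper establishes, so your goal is right. But your route diverges from the paper's, and there is a genuine gap in the middle. The paper argues by contradiction and an extremal principle: given distinct $T,T'$ with equal weight, it sets $b$ to be the maximal value on which the fillings disagree, tracks the rightmost $b$ in row $b$ of each tableau (the quantities $p_T, p_{T'}$), proves Claim~\ref{claim:fixedAbove} about entries above row $b$, and then runs a four-way case analysis according to which part of $\seg{m}{}$ the index $b$ lies in. It never attempts an explicit reconstruction of $T$ from ${\sf wt}(T)$, which is what your plan demands.

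The gap in your plan is the structural claim that within a single block the free entries are governed by ``a single integer parameter,'' with the only possible small entry being $i_{m-1}-1$. Both halves are false. Take $\alpha=(1,4,3,2)$, which avoids all patterns in ${\sf KM}$ and has all parts $\geq 1$. There is one nontrivial segment $\seg{2}{}=\{2,3,4\}$, and the non-forced cells are $(2,3),(2,4),(3,3)$. One checks that ${\sf qKT}(\alpha)$ has exactly five tableaux, whose free fillings $\bigl(T(2,3),T(3,3),T(2,4)\bigr)$ are $(2,3,2)$, $(2,3,1)$, $(1,3,1)$, $(1,2,1)$, $(2,1,2)$, with respective weights $(1,4,3,2)$, $(2,3,3,2)$, $(3,2,3,2)$, $(3,3,2,2)$, $(2,4,2,2)$. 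These weights are not totally ordered in dominance (compare $(2,4,2,2)$ with $(3,2,3,2)$), so the tableaux are certainly not indexed by a single integer. Moreover, in the fourth tableau $T(3,3)=2$ is a small entry that equals neither the row index $3$ nor $i_{m-1}-1=1$; Lemma~\ref{lemma:seesawLemma} only gives the bound $T({\sf y}) \geq i_{m-1}-1$, not equality, and Lemma~\ref{lemma:uniqueSmaller} limits small entries per column only \emph{from a fixed row $b$ upward}, leaving the rows below $b$ inside the same block unconstrained by that lemma. Finally, adjacent blocks are not disjoint in the values they use: a row of block $m+1$ can carry the value $i_m - 1$, which is also a row index of block $m$, so a ``per-block parameter'' cannot simply be read off from single coordinates of ${\sf wt}(T)$. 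The reconstruction step you describe as ``essentially a counting argument'' is therefore not available as stated; the paper's extremal-$b$ contradiction is a way around needing any such explicit characterization.
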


\begin{proof} Suppose not. There exists distinct $T,T'\in  {\sf qKT}(\alpha)$ such that
${\sf wt}(T)={\sf wt}(T')$. Define 
\[b:=\max\{v: \exists {\sf x}, T({\sf x})=v, T'({\sf x})\neq v\}.\]

Since ${\sf wt}(T)={\sf wt}(T')$, then
\begin{equation}
\label{eqn:June21x'}
\exists {\sf x}' \text{ \ such that \ } T'({\sf x}')=b, T({\sf x}')\neq b.
\end{equation}
Let $b'=\max\{v: \exists {\sf x}, T'({\sf x})=v, T({\sf x})\neq v\}$. We claim that $b = b'$. Firstly, \eqref{eqn:June21x'} implies $b \leq b'$. Since ${\sf wt}(T)={\sf wt}(T')$, the definition of $b'$ indicates there exists an ${\sf x}'$ such that $T({\sf x}')=b'$ with $T'({\sf x}')\neq b'$. If $b' > b$, this would contradict the definition of $b$. Hence $b=b'$ and 
\begin{equation}
\label{eqn:June20pop}
b:=\max\{v: \exists {\sf x}, T({\sf x})=v, T'({\sf x})\neq v\}=\max\{v: \exists {\sf x}, T'({\sf x})=v, T({\sf x})\neq v\}.
\end{equation}

Let
\[p_T:=\max\{c: T(b,c)=b\} \text{ \ \ and \ \ } p_{T'}=\max\{c:T'(b,c)=b\}.\]
Since $\alpha_i \geq 1$ for all $1 \leq i \leq n$, $\qkt{1}$ and $\qkt{2}$ imply that 
$T(b,1)=T'(b,1)=b$ and hence finite maximums exist 
($p_T, p_{T'}\geq 1$). By swapping $T$ and $T'$ (if necessary), we may assume 
\begin{equation}
\label{equation:wlogLess}
p_{T'} \leq p_T
\end{equation}

\begin{claim}
\label{claim:fixedAbove}
Let $b \in \seg{m}{}$ for $1 < m \leq k+1$ with $\alpha_{i_{m-1}-1} < \alpha_b$. 
\begin{itemize}
\item[(I)] $T({\sf y})\geq b$ if  ${\sf row}({\sf y})>b$ and ${\sf col}({\sf y}) \geq p_T$. Similarly,
$T'({\sf y})\geq b$ if ${\sf row}({\sf y})>b$ and ${\sf col}({\sf y}) \geq p_{T'}$.
\item[(II)] $T({\sf y})=T'({\sf y})$ if ${\sf row}({\sf y})>b$ and ${\sf col}({\sf y}) \geq p_{T}$.
\end{itemize}
\end{claim}
\noindent
\emph{Proof of Claim~\ref{claim:fixedAbove}:} (I): We prove the assertion for $T$; the $T'$ claim is the same. By definition of $p_T$ and \qkt{1}, $T(b,c)<b$ for any $c>p_T$.
By hypothesis $\alpha_{i_{m-1}-1} < \alpha_b$, and hence Lemma~\ref{lemma:segPartsRange}(e) implies $b \in \seg{m}{1}$.
Thus Lemma~\ref{lemma:rowfilledrowval}(b) indicates that 
\begin{equation}
\label{eqn:June20ggg}
p_T\geq \alpha_{i_{m-1}-1}+1.
\end{equation}
 Hence $c> \alpha_{i_{m-1}-1}+1$.
Thus, the hypotheses of Lemma \ref{lemma:uniqueSmaller} hold, and the conclusion of that lemma is that
$T({\sf y}) \geq b$ 
if ${\sf row}({\sf y})>b$ and ${\sf col}({\sf y}) =c (> p_T)$. 

Thus we may assume ${\sf row}({\sf y})>b$ and ${\sf col}({\sf y}) = p_T$. Suppose $p_T =  \alpha_b$. If $T({\sf y}) < b = T(b, p_T)$, then by \qkt{3}, there is a box of $D(\alpha)$ in position
$(b,p_T+1)=(b,\alpha_b+1)$, contradicting the definition of $D(\alpha)$. Hence, $p_T <  \alpha_b$. Let 
\begin{equation}
\label{eqn:b'def}
\ell=T(b, p_T+1) \text{\  and  \ $d = T({\sf y})$.}
\end{equation}
We want to show $d\geq b$; suppose not. By the definition (\ref{eqn:b'def}) of $\ell$ together with $\qkt{1}$, 
\begin{equation}
\label{eqn:Jul16yoa}
\ell<b.
\end{equation}
Thus there are three cases:

\noindent
\emph{Case 1:}  ($\ell \leq d < b$) $T$ violates \qkt{3} (where here $i=d, k=b$ and $j=\ell$ in that rule). 

\noindent
\emph{Case 2:} ($d<i_{m-1} - 1$) Since $b\in \seg{m}{}=\{i_{m-1},i_{m-1}+1,\ldots,i_m-1\}$ (by hypothesis), $b\geq i_{m-1}$.
Lemma~\ref{lemma:seesawLemma} states that $d=T({\sf y})\geq i_{m-1}-1$ since ${\sf row}({\sf y})>b\geq i_{m-1}$. Hence this case cannot occur.

\noindent
\emph{Case 3:} ($i_{m-1} - 1 \leq d < \ell$)  Since $b\in \seg{m}{}$ (by hypothesis), and $\ell < b$ by \eqref{eqn:Jul16yoa}, the assumption of this case says $d+1\in \seg{m}{}$.
Hence by definition of $\seg{m}{}$, \[\alpha_{d+1}\geq \alpha_{b}>\alpha_{i_{m-1}-1}\] (the latter inequality by the
hypothesis). We claim 
\begin{equation}
\label{eqn:July17tri}
T(s,p_T)=s\text{ for $d+1 \leq s \leq b$.}
\end{equation}
If $p_T = \alpha_{i_{m-1}-1}+1$, then Lemma~\ref{lemma:rowfilledrowval}(b) implies \eqref{eqn:July17tri}.  Otherwise (\ref{eqn:June20ggg}) implies $p_T > \alpha_{i_{m-1}-1}+1$. Thus 
Lemma \ref{lemma:uniqueSmaller} applied to column $p_T$ and row $d+1$ implies
\begin{equation}
\label{eqn:June20hhh}
\#\{s\geq d+1: T(s,p_T)< d+1\}\leq 1.
\end{equation}
However, $T({\sf y})=d$ and we assumed ${\sf row}({\sf y})>b>\ell\geq d+1$ (the last inequality being this case's assumption).
The previous sentence, combined with (\ref{eqn:June20hhh}) and $\qkt{1}$ says that $T(d+1,p_T)=d+1$. Iterating this
argument, using $\qkt{2}$, for $d+2 \leq s \leq b$ implies \eqref{eqn:July17tri}. 

Now apply \qkt{3} to $T({\sf y})=d<T(s,p_T)$ to see that
$T(s,p_T+1)>d$ for $d+1 \leq s \leq b$.  On the other hand, \qkt{1} says $T(s,p_T+1)\leq b$ for $d+1\leq s\leq b$. The definition of
$p_T$ means $T(s,p_T+1)\neq b$. Concluding, 
\[d<T(s,p_T+1)<b, \text{\ for $d+1\leq s\leq b$.}\] By pigeonhole, 
two of $\{T(s,p_T+1):d+1\leq s\leq b\}$ are equal, contradicting \qkt{2}.

Hence $d\geq b$, as desired.

(II): Suppose not and let $T({\sf y})\neq T'({\sf y})$ for some ${\sf y}$ such that ${\sf row}({\sf y})>b$ and
${\sf col}(y)\geq p_T$. In particular, at least one of $T({\sf y})$ and $T'({\sf y})$ is not $b$. 
If $\max\{T({\sf y}),T'({\sf y})\}<b$ we contradict (I). Hence $\max\{T({\sf y}),T'({\sf y})\}>b$. This contradicts \eqref{eqn:June20pop}. \qed

There are four possible cases to consider.

\noindent \textit{Case 1:} ($b\in \seg{1}{}$, $b\in \seg{m}{1}$ with $\alpha_{i_{m-1} - 1}\geq \alpha_b$, or $b \in \seg{m}{2}$) 
Let $b\in \seg{m}{}$ ($1\leq m\leq k+1$). By Lemma \ref{lemma:rowfilledrowval}(a), 
\begin{equation}
\label{eqn:June21aaa}
T(b,c)=b, \ T'(b,c)=b \text{\  for all $1\leq c\leq \alpha_b$.}
\end{equation}
By \qkt{1}, $b$ cannot appear in $T$ in any row $s$ strictly south of $b$. On the other hand, if $s\in \seg{m}{}$, and $s>b$, then
$\alpha_{s}\leq \alpha_b$. Hence by \qkt{2}, $b$ cannot appear in row $s$ of $T$. Now suppose $s>i_m$. Since
$i_m\in \seg{m+1}{}$, thus $i_m> b+1$. Hence by Lemma~\ref{lemma:seesawLemma}, no labels $<i_m-1$ appear in rows
strictly north of row $i_m$. In particular, $b$ does not appear in those rows. What we have just written also applies to $T'$,
thus
\begin{equation}
\label{eqn:June21jjj}
T(r,c)=b \Rightarrow r=b,i_m \text{ \ and \ } T'(r',c)\Rightarrow r'=b,i_m.
\end{equation}
Let ${\sf x}'$ be the box defined in (\ref{eqn:June21x'}). By (\ref{eqn:June21aaa}), in both $T$ and $T'$, row $b$
is filled entirely by $b$'s. Hence ${\sf row}({\sf x}')\neq b$. Thus by (\ref{eqn:June21jjj}), ${\sf row}({\sf x}')=i_m$.
Now since ${\sf wt}(T)={\sf wt}(T')$ row $i_m$ has the same number of $b$'s in $T$ and $T'$. Now, by $\qkt{1}$, all
labels left of the $b$'s in row $i_m$ of $T$ are strictly larger; the exact same statement is true of $T'$. However,
those larger labels cannot differ between $T$ and $T'$ by (\ref{eqn:June20pop}). Hence in fact, the $b$'s in row
$i_m$ are exactly in the same place in $T$ and $T'$, contradicting the definition (\ref{eqn:June21x'}) of ${\sf x}'$.

\noindent \textit{Case 2:} ($b \in \seg{m}{3}$) By Lemma \ref{lemma:rowfilledrowval}(a), 
\begin{equation}
\label{eqn:June21aaaa}
T(b,c)=b, \ T'(b,c)=b \text{\  for all $1\leq c\leq \alpha_b$.}
\end{equation}
By \qkt{1}, $b$ cannot appear in $T$ in any row $s$ strictly south of $b$. Let ${\sf x}'$ be the box defined in (\ref{eqn:June21x'}). By (\ref{eqn:June21aaaa}), in both $T$ and $T'$, row $b$
is filled entirely by $b$'s. Hence ${\sf row}({\sf x}')\neq b$. Notice 
\begin{equation}
\label{eqn:June22qqq}
T({\sf y})=T'({\sf y})  \text{ \ for all ${\sf y}$ such that ${\sf row}({\sf y}) > i_m$.}
\end{equation}
Indeed, by Lemma~\ref{lemma:seesawLemma}, $T({\sf y}), T'({\sf y})\geq i_m-1=b$. Then
(\ref{eqn:June20pop}) shows
 $T({\sf y}) = T'({\sf y})$.
 
It remains to consider if ${\sf row}({\sf x}')=i_m$ is possible. The contradiction in this case is derived exactly as
in the final four sentences of \textit{Case 1}.
 
\noindent \textit{Case 3:} ($b \in \seg{m}{1}$ for $1 < m \leq k+1$ with $\alpha_{i_{m-1}-1} < \alpha_b$, and $p_T = p_{T'}$) 
By \qkt{1} any entry in row $b$ of $T$ or $T'$ is $\leq b$. Thus, since $p_T = p_{T'}$, 
\begin{equation}
\label{eqn:June22like}
T(b,c)=b \iff 1\leq c\leq p_T \text{\ and \ } T'(b,c')=b \iff 1\leq c'\leq p_T(=p_{T'}).
\end{equation}
Hence ${\sf row}({\sf x}')\neq b$. Thus, by \qkt{1}, ${\sf row}({\sf x}') > b$. Then (\ref{eqn:June22like}) and
\qkt{2} implies ${\sf col}({\sf x}') > p_T$. Thus, Claim~\ref{claim:fixedAbove}(II) says that $T({\sf x}')=T'({\sf x}')$, which
contradicts the definition (\ref{eqn:June21x'}) of ${\sf x}'$.

\noindent \textit{Case 4:} ($b \in \seg{m}{1}$ for $1 < m \leq k+1$ with $\alpha_{i_{m-1}-1} < \alpha_b$, and $p_T > p_{T'}$) 
Since
\begin{equation}
\label{eqn:June22much}
T(b,c)=b \iff 1\leq c\leq p_T \text{\ and \ } T'(b,c')=b \iff 1\leq c'\leq p_{T'}(<p_{T}),
\end{equation}
by
\qkt{1} and ${\sf wt}(T)={\sf wt}(T')$, we have
\begin{equation}
\label{eqn:June22music}
\#\{{\sf z}\in D(\alpha): T'({\sf z})=b, T({\sf z})\neq b, {\sf row}({\sf z})>b\}\geq p_T - p_{T'}.
\end{equation}
For all ${\sf z}$ in the set from (\ref{eqn:June22music}), we have that ${\sf col}({\sf z})>p_{T'}$ by (\ref{eqn:June22much})
combined with \qkt{2}. Moreover, by Claim~\ref{claim:fixedAbove}(II), $T({\sf z})=T'({\sf z})$ if ${\sf col}({\sf z})\geq p_T$ 
and ${\sf row}({\sf z})>b$. Hence ${\sf col}({\sf z})<p_{T}$. Summarizing, by (\ref{eqn:June22music}) there are $p_T-p_{T'}$
of these boxes ${\sf z}$ that satisfy $p_{T'}<{\sf col}({\sf z})<p_T$. By pigeonhole, at least two of these ${\sf z}$
are in the same column. This contradicts \qkt{2}.

We conclude that no such $T,T'$ can exist.
\end{proof}

\begin{proposition} 
\label{prop:exhaustiveQKMultFreeCases}
Suppose $\alpha \in {\overline {\sf KM}}_n^{\geq 1}$ and $\beta \in {\sf Qlswap}(\alpha)$. Then $\mathfrak{D}_\beta$ is multiplicity-free.
\end{proposition}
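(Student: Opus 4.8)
The plan is to deduce Proposition~\ref{prop:exhaustiveQKMultFreeCases} from Proposition~\ref{prop:quasiKeyMultFree}. First, since every entry of $\alpha$ is at least $1$ and a left swap merely transposes two entries of a composition, every $\gamma\in {\sf lswap}(\alpha)$ is a rearrangement of $\alpha$ and hence also has all entries $\geq 1$. Therefore ${\sf flat}(\gamma)=\gamma$ for each $\gamma\in {\sf lswap}(\alpha)$, so the defining condition of ${\sf Qlswap}(\alpha)$ is satisfied vacuously, and ${\sf Qlswap}(\alpha)={\sf lswap}(\alpha)$. It thus suffices to prove that $\mathfrak{D}_\beta$ is multiplicity-free for every $\beta\in {\sf lswap}(\alpha)$.

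The crux is the claim that ${\sf lswap}(\alpha)\subseteq {\overline {\sf KM}}_n^{\geq 1}$; granting it, Proposition~\ref{prop:quasiKeyMultFree} applied to each $\beta\in {\sf lswap}(\alpha)$ finishes the proof. Since ${\sf lswap}(\alpha)$ is generated from $\alpha$ by iterating single left swaps, and we have just seen that all entries remain $\geq 1$, it is enough to show by induction on the number of swaps that a single left swap preserves ${\overline {\sf KM}}_n^{\geq 1}$: if $\gamma\in {\overline {\sf KM}}_n^{\geq 1}$ and $\gamma'$ is obtained from $\gamma$ by exchanging the entries in positions $i<j$ with $\gamma_i<\gamma_j$, then $\gamma'$ avoids every pattern in ${\sf KM}$.

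I would prove this by contradiction. Suppose $\gamma'$ contains some $P\in {\sf KM}$, witnessed by positions $j_1<\cdots<j_k$ (so $k\leq 4$); recall that in $\gamma'$ position $i$ carries the larger value $\gamma_j$ and position $j$ the smaller value $\gamma_i$. If neither $i$ nor $j$ is among $j_1,\ldots,j_k$, the same positions witness $P$ already in $\gamma$, which is impossible. If both $i$ and $j$ are witness positions, then because $i<j$ but $\gamma'_i>\gamma'_j$ the pattern $P$ must contain an earlier entry strictly larger than a later one; inspecting ${\sf KM}$ shows this cannot happen for $(0,1,2)$ or $(0,0,2,2)$, while for $(0,0,2,1)$, $(1,0,3,2)$, $(1,0,2,2)$ the inequalities among the witnessed entries force a strictly increasing triple of entries of $\gamma$ meeting the gap requirements of $(0,1,2)$, again impossible. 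Finally, if exactly one of $i,j$ is a witness position --- say $j$ (the case of $i$ is analogous) --- I would first try to replace $j$ in the witness by $i$; since $\gamma'_j=\gamma_i$ and position $i$ of $\gamma$ also carries $\gamma_i$, this yields a witness of $P$ in $\gamma$ unless $i$ fails to lie between the neighbours of $j$ in the witness. In that remaining configuration the larger value $\gamma_j$ sits at position $j$ of $\gamma$ in place of the smaller witnessed value $\gamma_i$, and, using this together with position $i$ (which carries $\gamma_i$), one exhibits in $\gamma$ either a $(0,1,2)$-pattern or, when $P$ has length $4$, some (possibly different) member of ${\sf KM}$. In every case $\gamma\notin {\overline {\sf KM}}_n^{\geq 1}$, a contradiction; this proves the single-swap claim, hence the proposition.

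The step I expect to be the main obstacle is this last one: the finite but delicate case analysis for a single left swap, and in particular the sub-case where exactly one endpoint of the swap takes part in a length-$4$ pattern of $\gamma'$, where one must sometimes extract from $\gamma$ a \emph{different} member of ${\sf KM}$ rather than the pattern one started with. The ``both endpoints'' sub-case is by comparison routine, because each member of ${\sf KM}$ other than $(0,1,2)$ and $(0,0,2,2)$ has each of its descents positioned so that undoing the swap there uncovers an increasing triple.
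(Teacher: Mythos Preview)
Your proposal is correct and follows essentially the same route as the paper. Both arguments reduce to showing ${\sf lswap}(\alpha)\subseteq{\overline{\sf KM}}_n^{\geq 1}$ (using that ${\sf Qlswap}(\alpha)={\sf lswap}(\alpha)$ when all parts are positive), then prove the single-swap closure statement by contradiction via a case analysis on which witness positions of a forbidden pattern coincide with the swap indices. The paper organizes the analysis pattern by pattern and, within each pattern, by which index equals $i$ or $j$, writing out $(0,1,2)$ and $(1,0,3,2)$ in full and leaving the remaining three patterns to the reader; your organization by ``neither/both/exactly one endpoint'' is an equivalent way to slice the same finite check, and your identification of the exactly-one-endpoint, length-$4$ subcase as the delicate part matches where the paper's casework is most involved.
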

\begin{proof}
 By Proposition~\ref{prop:quasiKeyMultFree}, it suffices to show that
${\sf Qlswap}(\alpha)\subseteq {\overline {\sf KM}}_n^{\geq 1}$.
Since $\alpha$ has no parts equal to $0$, ${\sf Qlswap}(\alpha) = {\sf lswap}(\alpha)$. 
Hence, by induction, it is enough to prove that if $\beta=(\ldots, \alpha_j,\ldots, \alpha_i,\ldots)$ 
is a left swap of $\alpha$ then $\beta \in {\overline {\sf KM}}_n^{\geq 1}$.  To reach a contradiction, assume $\beta \notin {\overline {\sf KM}}_n^{\geq 1}$. 

There are five cases to consider. In each Subcase, the contradiction derived is that $\alpha$ contains a pattern from
${\sf KM} = \{ (0,1,2), (0,0,2,2), (0,0,2,1), (1,0,3,2), (1,0,2,2)\}$. 

\noindent (I) \underline{$(\beta_{a_1},\beta_{a_2},\beta_{a_3})\simeq (0,1,2)$.} Since $\beta$ is a left swap of $\alpha$, 
$\{i,j\}\not\subseteq \{a_1,a_2,a_3\}$. Also, since $\alpha\in {\overline {\sf KM}}_n^{\geq 1}$,
$\{i,j\}\cap \{a_1,a_2,a_3\}\neq \emptyset$. 

\noindent \textit{Subcase 1:} ($a_1=i$) $\alpha_{i} < \alpha_{a_2} < \alpha_{a_3}$. Hence,
$(\alpha_i,\alpha_{a_2},\alpha_{a_3})\simeq (0,1,2)$.

\noindent \textit{Subcase 2:} ($a_2=i$) If $\alpha_{a_1} < \alpha_{i}$, then $\alpha_{a_1} < \alpha_{i} < \alpha_{a_3}$ and
hence $(\alpha_{a_1} ,\alpha_{i} , \alpha_{a_3})\simeq (0,1,2)$. Thus assume $\alpha_{a_1}\geq \alpha_i$. If $j > a_3$, then $(\alpha_{a_1}, \alpha_{i},\alpha_{a_3},\alpha_{j}) \simeq (0,0,2,1)$ or $\simeq (1,0,3,2)$. Otherwise $j < a_3$ and hence
 $\alpha_{i} < \alpha_{j} < \alpha_{a_3}$.

\noindent \textit{Subcase 3:} ($a_3=i$) $\alpha_{a_1} < \alpha_{a_2} < \alpha_{j}$.

\noindent \textit{Subcase 4:} ($a_1=j$) $\alpha_{i} < \alpha_{a_2} < \alpha_{a_3}$.

\noindent \textit{Subcase 5:} ($a_2=j$) If $\alpha_{a_3} > \alpha_{j}$, then $\alpha_{a_1} < \alpha_{j} < \alpha_{a_3}$. If $\alpha_{a_3} \leq \alpha_{j}$ with $i < a_1$, then $(\alpha_{i}, \alpha_{a_1}, \alpha_{j},\alpha_{a_3}) \simeq (1,0,2,2)$ or $\simeq (1,0,3,2)$. If $\alpha_{a_3} \leq \alpha_{j}$ with $i > a_1$, then $\alpha_{a_1} < \alpha_{i} < \alpha_{j}$.

\noindent \textit{Subcase 6:} ($a_3=j$)  $\alpha_{a_1} < \alpha_{a_2} < \alpha_{j}$.

\noindent (II) \underline{$(\beta_{a_1},\beta_{a_2},\beta_{a_3},\beta_{a_4})\simeq (1,0,3,2)$.}

\noindent \textit{Subcase 1:} ($a_1=i$, $a_2=j$) $\alpha_{i} < \alpha_{j} < \alpha_{a_3}$.

\noindent \textit{Subcase 2:} ($a_3=i$, $a_4=j$) $\alpha_{a_2} < \alpha_{i} < \alpha_{j}$.

\noindent \textit{Subcase 3:} ($a_1=i$ and $j \notin \{ a_1,a_2,a_3,a_4 \}$) If $\alpha_i \geq \alpha_{a_2}$, then $(\alpha_{i}, \alpha_{a_2}, \alpha_{a_3},\alpha_{a_4})$ contains either $(1,0,3,2)$ or $(0,0,2,1)$. If $\alpha_i < \alpha_{a_2}$, then $\alpha_i < \alpha_{a_2} < \alpha_{a_3}$.

\noindent \textit{Subcase 4:} ($a_2=i$ and $j \notin \{ a_1,a_2,a_3,a_4 \}$) $(\alpha_{a_1}, \alpha_{i}, \alpha_{a_3},\alpha_{a_4})\simeq (1,0,3,2)$.

\noindent \textit{Subcase 5:} ($a_3=i$ and $j \notin \{ a_1,a_2,a_3,a_4 \}$) If $\alpha_i \geq \alpha_{a_4}$, then $(\alpha_{a_1}, \alpha_{a_2}, \alpha_{i},\alpha_{a_4})\simeq (1,0,3,2)$ or $\simeq(1,0,2,2)$. If $\alpha_{a_2} < \alpha_i < \alpha_{a_4}$, then $\alpha_{a_2} < \alpha_{i} < \alpha_{j}$. If $\alpha_{a_2} \geq \alpha_i$ and $j > a_4$, then $\alpha_i < \alpha_{a_4} < \alpha_{j}$. If $\alpha_{a_2} \geq \alpha_i$ and $j < a_4$, then $(\alpha_{a_2}, \alpha_{i}, \alpha_{j},\alpha_{a_4})\simeq (1,0,3,2)$.

\noindent \textit{Subcase 6:} ($a_4=i$ and $j \notin \{ a_1,a_2,a_3,a_4 \}$) $(\alpha_{a_1}, \alpha_{a_2}, \alpha_{a_3},\alpha_{j})\simeq (1,0,3,2)$.

\noindent \textit{Subcase 7:} ($a_1=j$ and $i \notin \{ a_1,a_2,a_3,a_4 \}$) $(\alpha_{i}, \alpha_{a_2}, \alpha_{a_3},\alpha_{a_4}) \simeq (1,0,3,2)$.

\noindent \textit{Subcase 8:} ($a_2=j$ and $i \notin \{ a_1,a_2,a_3,a_4 \}$) If $\alpha_j \leq \alpha_{a_1}$, then $(\alpha_{a_1}, \alpha_{j}, \alpha_{a_3},\alpha_{a_4})\simeq (0,0,2,1)$ or $\simeq (1,0,3,2)$. If $\alpha_{a_1} < \alpha_j < \alpha_{a_4}$, then $\alpha_{i} < \alpha_{j} < \alpha_{a_4}$. If $\alpha_{a_4} \leq \alpha_j$ and $i < a_1$, then $\alpha_i < \alpha_{a_1} < \alpha_{j}$. If $\alpha_{a_4} \leq \alpha_j$ and $i > a_1$, then $(\alpha_{a_1}, \alpha_{i}, \alpha_{a_3},\alpha_{a_4}) \simeq (1,0,3,2)$.

\noindent \textit{Subcase 9:} ($a_3=j$ and $i \notin \{ a_1,a_2,a_3,a_4 \}$) $(\alpha_{a_1}, \alpha_{a_2}, \alpha_{j},\alpha_{a_4}) \simeq (1,0,3,2)$.

\noindent \textit{Subcase 10:} ($a_4=j$ and $i \notin \{ a_1,a_2,a_3,a_4 \}$) If $\alpha_j \leq \alpha_{a_3}$, then $(\alpha_{a_1}, \alpha_{a_2}, \alpha_{a_3},\alpha_{j})\simeq (1,0,3,2)$ or $\simeq (1,0,2,2)$. If $\alpha_j > \alpha_{a_3}$, then $\alpha_{a_2} < \alpha_{a_3} < \alpha_{j}$.

We leave the cases $(\beta_{a_1},\beta_{a_2},\beta_{a_3},\beta_{a_4})\simeq (1,0,2,2), (0,0,2,1), (0,0,2,2)$ 
to the reader.
\end{proof}

\section{Proof of classification theorem of multiplicity-free key polynomials}
\label{sec:mfKey}

\subsection{Kohnert diagrams and proof of necessity} \label{sec:5.1}
Assume $\alpha\not\in {\overline {\sf KM}}_n$. We will now show that $\kappa_{\alpha}$
has multiplicity.

We use \emph{Kohnert's rule} to compute $\kappa_{\alpha}$. For any $\alpha\in {\sf Comp}_n$, the \emph{skyline diagram} is
\[D(\alpha)=\{(i,j):1\leq i\leq n, 1\leq j\leq \alpha_i\}\] 
(where $i$ indexes the rows from south to north, and $j$ indexes the columns, from left to right). The set ${\sf KD}(\alpha)$ of \emph{Kohnert diagrams} is recursively defined as follows. Initially $D(\alpha)\in {\sf KD}(\alpha)$. At each stage thereafter,  given $D\in {\sf KD}(\alpha)$ a box $(i,j)\in D$ is \emph{movable} if it is
the rightmost box of $D$ in row $i$ and there exists $i'<i$ such that $(i',j)\not\in D$.
For any such movable box, a Kohnert diagram
$D'$ is obtained by replacing $(i,j)$ with $(i',j)$ where $i'$ is largest among all choices.
Generate a $D'$ from $D$ for every choice of moveable $(i,j)$. Now ${\sf KD}(\alpha)$ is the (finite) \emph{set} (not multiset) of Kohnert diagrams obtained starting from
$D(\alpha)$.

For $D\in {\sf KD}(\alpha)$ let 
\[{\sf Kohwt}(D)=\prod_{1\leq i\leq n} x_i^{\#\{j:(i,j)\in D\}}.\] 

\begin{theorem}[A.~Kohnert \cite{Kohnert}]\label{thm:KD}
$\kappa_{\alpha}=\sum_{D\in {\sf KD}(\alpha)} {\sf Kohwt}(D)$.
\end{theorem}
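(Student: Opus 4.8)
The plan is to prove Theorem~\ref{thm:KD} by induction on the number of inversions of $\alpha$, i.e., the number of pairs $i<i'$ with $\alpha_i<\alpha_{i'}$; this statistic drops by exactly one when $\alpha$ is replaced by $\widehat\alpha$ in the recursion defining $\kappa_\alpha$, so it is the natural one. For the base case, $\alpha$ is weakly decreasing and $\kappa_\alpha=x^\alpha$; in $D(\alpha)$ the rightmost box of row $i$ lies in column $\alpha_i$, and since $\alpha_{i'}\geq\alpha_i$ for every $i'<i$ we have $(i',\alpha_i)\in D(\alpha)$, so no box is movable, ${\sf KD}(\alpha)=\{D(\alpha)\}$, and $\sum_{D\in{\sf KD}(\alpha)}{\sf Kohwt}(D)=x^{\alpha}=\kappa_\alpha$.

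For the inductive step, pick $j$ with $\alpha_{j+1}>\alpha_j$, let $\widehat\alpha$ be $\alpha$ with its $j$th and $(j+1)$st entries transposed, so that $\kappa_\alpha=\pi_j(\kappa_{\widehat\alpha})$ and, by induction, $\kappa_{\widehat\alpha}=\sum_{D\in{\sf KD}(\widehat\alpha)}{\sf Kohwt}(D)$. It then suffices to prove
\[\pi_j\Bigl(\sum_{D\in{\sf KD}(\widehat\alpha)}{\sf Kohwt}(D)\Bigr)=\sum_{D\in{\sf KD}(\alpha)}{\sf Kohwt}(D).\]
Since $\pi_j$ involves only $x_j$ and $x_{j+1}$, I would group the diagrams on each side according to the placement of all boxes outside rows $j$ and $j+1$, and, for each such fixed ``background'', compare the generating function in $x_j,x_{j+1}$ of the compatible fillings of rows $j$ and $j+1$. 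The aim is to show that over a fixed background the pairs $(p,q)$ recording the number of boxes in row $j$ and in row $j+1$ among the diagrams of ${\sf KD}(\widehat\alpha)$ decompose into disjoint $\pi_j$-strings $\{(p,q),(p-1,q+1),\dots,(q,p)\}$ with $p\geq q$, and that $\pi_j$ applied to such a string produces precisely the pairs occurring over the same background among the diagrams of ${\sf KD}(\alpha)$; the prototypical instance is that $D(\widehat\alpha)$ supplies the string through $(\alpha_{j+1},\alpha_j)$, whose image under $\pi_j$ is realized by the diagrams obtained from $D(\alpha)$ by sliding boxes of row $j+1$ down into row $j$.

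The main obstacle is that the ``background'' is not truly frozen: a Kohnert move can drop a box from a row above $j+1$ into row $j$ or into row $j+1$, so these two rows are entangled with the higher rows. One must therefore argue that the Kohnert dynamics can be reorganized so as to resolve the higher rows first and the pair $(j,j+1)$ last, with the intermediate diagrams in the claimed correspondence. Controlling this commutation is the crux of Kohnert's theorem, and it is exactly here that the original argument in \cite{Kohnert} is incomplete; a rigorous treatment needs the more delicate analysis carried out in later work.

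An alternative, relying only on results already available above, is to bypass the Demazure operator altogether: combine Assaf--Searles' Theorem~\ref{AssafSearlesTheorem} with an explicit weight-preserving bijection
\[{\sf KD}(\alpha)\ \longleftrightarrow\ \bigsqcup_{\beta\in{\sf Qlswap}(\alpha)}{\sf qKT}(\beta),\]
obtained by reading a Kohnert diagram column by column from the bottom up to build a tableau. The work then becomes showing this map is well defined and bijective --- that is, that the closure of $D(\alpha)$ under Kohnert moves is cut out exactly by conditions \qkt{1}--\qkt{4} together with the dominance/${\sf flat}$ constraint defining ${\sf Qlswap}$ --- which is precisely the content of the corresponding correspondence in \cite{Assaf.Searles}.
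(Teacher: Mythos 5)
The paper gives no proof of this statement: Theorem~\ref{thm:KD} is imported verbatim as a result of Kohnert and cited to \cite{Kohnert}, so there is no proof of the authors' to compare your proposal against. What you have written is therefore not a blind reconstruction of something in the paper but an independent attempt at a well-known (and notoriously delicate) theorem, and it should be judged on that basis.

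Your first sketch picks the right invariant (inversions) and the right reduction (show $\pi_j$ intertwines ${\sf KD}(\widehat\alpha)$ and ${\sf KD}(\alpha)$), and the base case is correct. But you then concede the central point yourself: the ``background'' of rows other than $j,j+1$ is not frozen, Kohnert moves from higher rows feed boxes into rows $j$ and $j+1$, and one must show that the dynamics can be re-ordered so that rows $j,j+1$ are settled last while still accounting for every diagram exactly once. That commutation statement is the entire content of the theorem; it is precisely the step Kohnert's original argument did not pin down, and the subsequent literature devotes whole papers to it. As it stands the sketch states the shape of the argument but does not supply the argument, so it has a genuine gap rather than a minor omission.

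Your second route also needs a word of caution. Assaf--Searles' Theorem~\ref{AssafSearlesTheorem} is proved in \cite{Assaf.Searles} by working directly with Kohnert's diagram model (their ``Kohnert tableaux'' are labelled Kohnert diagrams), so a weight-preserving bijection ${\sf KD}(\alpha)\leftrightarrow\bigsqcup_{\beta\in{\sf Qlswap}(\alpha)}{\sf qKT}(\beta)$ together with Theorem~\ref{AssafSearlesTheorem} is not automatically an independent proof of Theorem~\ref{thm:KD}; one must check that the chain of results being invoked does not already presuppose Kohnert's rule. Unless you isolate a proof of the quasi-key expansion that is logically prior to ${\sf KD}(\alpha)$, this alternative is at risk of circularity. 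For the purposes of the present paper none of this is needed: Theorem~\ref{thm:KD} is simply quoted and used as a black box.
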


Given $D\in {\sf D}(\alpha)$, call a row $i$ \emph{initial} if it is empty or
the boxes in that row are precisely $(i,1),(i,2),\ldots,(i,j)$ for some $j\in {\mathbb Z}_{\geq 1}$.

\begin{lemma}
\label{lemma:earlysteps}
Suppose $D\in {\sf KD}(\alpha)$, $i'<i$ and $j'<j$. 
If $(i',j')$ and $(i,j)$ are the rightmost boxes of their rows, and all rows $i'\leq i''<i$ are initial, then $D'$ obtained by replacing $(i,j)$ with $(i',j)$, is in ${\sf KD}(\alpha)$.
\end{lemma}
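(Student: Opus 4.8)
The plan is to reach the claimed diagram $D'$ from $D$ by a carefully ordered sequence of legal Kohnert moves, using the hypothesis that the rows strictly between $i'$ and $i$ are all initial (and so the column $j$ is completely empty in $D$ in those rows, because $j' < j$ and $(i',j')$ is the rightmost box of row $i'$, which forces $(i',j)\notin D$; likewise $(i'',j)\notin D$ for $i'\le i''<i$ when row $i''$ is initial, since if $(i'',j)\in D$ initiality would put $(i'',j')\in D$ too, but that is fine — the real point is column $j$ is empty in all rows from $i'+1$ to $i-1$, and also $(i',j)\notin D$). So in $D$, column $j$ is empty in every row from $i'$ up to $i-1$, while $(i,j)\in D$ is the rightmost box of row $i$. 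By the definition of the Kohnert move applied to the movable box $(i,j)$, the new position is $(i^*,j)$ where $i^*$ is the \emph{largest} index with $i^*<i$ and $(i^*,j)\notin D$. We have just argued every index in $\{i',i'+1,\dots,i-1\}$ is such an index, so $i^* \ge i-1 \ge i'$; thus the single Kohnert move from $D$ sends $(i,j)$ to $(i^*,j)$ with $i'\le i^* \le i-1$.

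If $i^* = i'$ we are done in one step. Otherwise $i^* > i'$, and the idea is to induct: I would like to say that in the resulting diagram $D_1$ the box $(i^*,j)$ is again the rightmost in its row, with column $j$ empty in all rows $i',\dots,i^*-1$, so that the same reasoning applies and after finitely many moves the box lands in row $i'$. The one thing to check is that $(i^*,j)$ is the rightmost box of row $i^*$ in $D_1$: row $i^*$ had $(i^*,j)\notin D$, and if row $i^*$ was initial in $D$ with boxes $(i^*,1),\dots,(i^*,e)$ then $e<j$, so after adding $(i^*,j)$ this box is indeed the rightmost; and if $i^* = i-1$ is not among the ``strictly between'' rows we instead use $i^* \le i-1$ together with initiality of the rows in $(i',i)$ — but note $i^*$ itself, if $i^* < i-1$, lies strictly between $i'$ and $i$ hence is initial, and if $i^* = i-1$ it is the top of that range and also initial by hypothesis (the hypothesis covers $i'\le i''<i$). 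Either way row $i^*$ is initial, $(i^*,j)$ was absent, so after the move it is rightmost. Column $j$ is still empty in rows $i',\dots,i^*-1$ since the move only touched row $i$ and row $i^*$. Hence by downward induction on $i^*-i'$ the box reaches $(i',j)$, and the composite diagram is exactly $D'$.

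The main obstacle, and the step to write carefully, is the bookkeeping that $(i^*,j)$ remains the \emph{rightmost} box of its row after each intermediate move — i.e.\ verifying that the rows the box passes through are initial with all their boxes strictly left of column $j$, so no box lies to the right of the relocated one. This follows from the hypothesis that rows $i'\le i''<i$ are initial together with $j'<j$ forcing the ``width'' of each such initial row to be $<j$, but it must be tracked through the induction, since after the first move one of these rows has acquired the new box in column $j$ and is no longer initial in the naive sense — however it is still true that its unique box in column $\ge j$ is precisely the one we are about to move, which is all that is needed. I would phrase the induction on the quantity $i^*-i'$ (or, equivalently, induct on the number of rows strictly between the box's current row and $i'$), with base case $i^*=i'$ handled trivially, and in the inductive step invoke the single-move analysis above to drop to a strictly smaller value of this quantity, concluding that $D' \in {\sf KD}(\alpha)$.
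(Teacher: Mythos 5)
Your proof takes the same route as the paper's: perform a single Kohnert move from $(i,j)$, observe it lands at some $(i^*,j)$ with $i'\le i^*\le i-1$, check that the lemma's hypotheses are restored with $i^*$ in place of $i$, and induct on $i^*-i'$. That skeleton is correct.

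However, you repeatedly assert something false along the way, namely that column $j$ is empty in $D$ in all rows from $i'$ to $i-1$, and consequently that the first Kohnert move lands at $i^*=i-1$. The hypothesis is only that these rows are initial, and an initial row is perfectly free to have boxes in column $j$ or beyond. As a concrete counterexample, take $\alpha=(1,3,2)$, $D=D(\alpha)$, $(i,j)=(3,2)$, $(i',j')=(1,1)$: rows $1$ and $2$ are initial, but $(2,2)\in D$, and the Kohnert move sends $(3,2)$ directly to $(1,2)$, skipping row $2$. Your parenthetical attempt to prove emptiness acknowledges it ``is fine'' (i.e., not a contradiction), and the later claim that $j'<j$ ``forces the width of each such initial row to be $<j$'' is a non sequitur, since $j'$ only bounds the width of row $i'$. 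Fortunately none of this is needed: what is true and what the induction actually uses is that (a) $(i',j)\notin D$ because row $i'$ is initial with rightmost box in column $j'<j$, so $i^*\ge i'$, while trivially $i^*\le i-1$; (b) $(i^*,j)\notin D$ by definition of the Kohnert move, so since row $i^*$ is initial its boxes all lie strictly left of column $j$ and $(i^*,j)$ becomes the rightmost box of that row after the move; and (c) rows $i',\ldots,i^*-1$ remain initial since the move only touched rows $i$ and $i^*$. Strip out the emptiness claims and the surviving argument matches the paper's.
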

\begin{proof}
The hypotheses on $i,j,i',j'$ guarantee that $(i,j)$ is moveable. Since each row
$i''$ is initial, $(i,j)\to (i'',j)$ is a Kohnert move (giving a diagram $D''$) whenever
$j''<j$ (where $(i'',j'')$ is the rightmost box of row $i''$). If indeed $i''=i'$ we are done, \emph{i.e.},
$D=D''$. Otherwise since $i''$ is initial in $D(\alpha)$, it must be that $(i'',j)$ is the rightmost box of its row in $D''$, and is therefore moveable. Then since all 
rows $i'<i'''<i''$ are initial in $D''$ (since they were in $D$), the claim follows by
induction on $i-i'$.
\end{proof}

\begin{corollary}
\label{cor:earlysteps}
Suppose $D=D(\alpha)$, $i'<i$, $j'<j$. Let $(i',j')$ and $(i,j)$ be the rightmost boxes
of their rows. Then $D'$ as defined in Lemma~\ref{lemma:earlysteps} is in ${\sf KD}(\alpha)$.
\end{corollary}
\begin{proof}
All rows of $D(\alpha)$ are initial, so Lemma~\ref{lemma:earlysteps} applies.
\end{proof}

In what follows, $(i_k,j_k)$ is the rightmost box of row $i_k$.

\noindent \textit{Case 1:}  ($\alpha$ contains $(0,1,2)$.) Let $i_0<i_1<i_2$ be the rows of the ``$0$'',
``$1$'' and ``$2$'' respectively. By Corollary~\ref{cor:earlysteps} we can  
replace $(i_2,j_2)$ with $(i_0,j_2)$ in $D(\alpha)$, resulting in $D'\in {\sf Koh}(\alpha)$.
On the other hand, by Corollary~\ref{cor:earlysteps} one can obtain $E'\in {\sf Koh}(\alpha)$ by replacing $(i_1,j_1)$ with $(i_0,j_1)$. Since the rows $r\geq i_1$ in $E'$ are still
initial, we can apply Lemma~\ref{lemma:earlysteps} to obtain $E''\in {\sf KD}(\alpha)$
by replacing $(i_2,j_2)$ with $(i_1,j_2)$ in $D''$. The net effect in both cases is to place an additional box in row $i_0$ and remove a box from row $i_2$. Hence ${\sf Kohwt}(D')={\sf Kohwt}(E'')$; therefore by Theorem~\ref{thm:KD}, 
 $[{\sf Kohwt}(D')] \kappa_{\alpha}\geq 2$, as desired.

\noindent \textit{Case 2:}  ($\alpha$ contains $(0,0,2,1)$.) Let $i_{0'}<i_0<i_2<i_1$ be the indices  of the $(0,0,2,1)$ pattern (in the respective order). By Corollary~\ref{cor:earlysteps}, $D'$ obtained from $D(\alpha)$ by the swap $(i_1,j_1)\to (i_{0'},j_1)$ is in ${\sf KD}(\alpha)$. Since
 all rows $r\geq i_0$ of $D''$ are initial, we can use Lemma~\ref{lemma:earlysteps} to 
 move $(i_2,j_2)\to (i_0,j_2)$ giving $D'''\in {\sf KD}(\alpha)$. On the other hand,
 starting from $D(\alpha)$ we can again use Corollary~\ref{cor:earlysteps} to define
 $E'\in {\sf KD}(\alpha)$ by the swap $(i_2,j_2)\to (i_{0'},j_2)$. Then Lemma~\ref{lemma:earlysteps} allows us to move $(i_1,j_1)\to (i_0,j_1)$ giving $E''\in {\sf KD}(\alpha)$. Now one can see $D'''\neq E''$ but both have the same ${\sf Kohwt}$, as
 needed.
 
 \noindent \textit{Case 3:}  ($\alpha$ contains $(1,0,3,2)$.) This is the same argument as 
 {\sf Case 2} except that we use $i_1<i_0<i_3<i_2$ in place of $i_{0'}<i_0<i_2<i_1$,
 respectively.
 
 \noindent \textit{Case 4:}  ($\alpha$ contains $(0,0,2,2)$.) Let $i_{0'}<i_0<i_{2'}<i_2$ be the
 rows of $(0,0,2,2)$ in that respective order. By Corollary~\ref{cor:earlysteps} turn
 $D(\alpha)$ into $D'\in {\sf KD}(\alpha)$ by the move $(i_{2'},j_{2'})\to (i_{0'},j_{2'})$.
 Now since all rows $r\geq i_0$ of $D'$ are initial, we can apply the argument of
 {\sf Case 1} using rows $i_0<i_{2'}<i_2$ rather than the $i_0<i_1<i_2$ of that case.
 
 \noindent \textit{Case 5:}  ($\alpha$ contains $(1,0,2,2)$.) Let $i_{0'}<i_0<i_{2'}<i_2$ be the
 rows of $(0,0,2,2)$ in that respective order. We can apply the argument of
 {\sf Case 4}, where these indices play the role of $i_{0'}<i_0<i_{2'}<i_2$
 from that case.

This completes the necessity argument. 

\subsection{Proof of sufficiency}

Let $b \in \seg{m}{}$ for some $1\leq m \leq k+1$. Define
\begin{equation}
\label{equation:rmin}
\rmin{b}{\alpha}=\left\{\begin{array}{ll}
        b & \text{$\alpha_{i_{m-1}-1} \geq \alpha_b$} \\[2pt]
        i_{m-1}-1 \qquad & \text{otherwise} \\
                \end{array} \right.
\end{equation}
\begin{equation}
\label{equation:rmax}
\rmax{b}{\alpha}=\left\{\begin{array}{ll}
        b+1 \qquad & \text{if } \alpha_{b+1} \geq \alpha_{i_m} \\[2pt]
        i_m \qquad & \text{otherwise} \\
        \end{array} \right.
\end{equation}
\begin{equation}
\label{equation:flex}
\flex{b}{\alpha}=\left\{\begin{array}{ll}
        \alpha_{\rmax{b}{\alpha}} - \alpha_{\rmin{b}{\alpha}} - 1 \qquad & \text{if } \alpha_{\rmax{b}{\alpha}} > \alpha_{\rmin{b}{\alpha}} \\[2pt]
        0 \qquad & \text{otherwise}
        \end{array} \right.
\end{equation}

\begin{example}\label{exa:rminmaxflex}
Consider $\alpha = (10,5,12,9,8,8,4,2,5,1,3)$. Then $k=3$,
$i_0:=1, i_1 = 3, i_2 = 9, i_3 = 11, i_4:=12$.
Let $b=5 \in \seg{2}{}$. Then $\rmin{b}{\alpha} = i_1 = 2$, since $\alpha_{i_1} < \alpha_b$. Since $\alpha_{b+1} > \alpha_{i_2}$, $\rmax{b}{\alpha} = b+1 = 6$. Thus $\flex{b}{\alpha} = \alpha_{\rmax{b}{\alpha}} - \alpha_{\rmin{b}{\alpha}} - 1 = \alpha_6 - \alpha_2 - 1 = 2$.
\end{example}

\begin{lemma}
\label{lemma:dominanceInterval}
Suppose $\alpha \in  {\overline {\sf KM}}_n^{\geq 1}$ and $T \in {\sf qKT}(\alpha)$ with $\mu = {\sf wt}(T)$. Then 
\begin{itemize}
\item[(I)] $\alpha \leq_{\sf Dom} \mu$. 
\item[(II)] $\mu_1+\cdots+\mu_b \leq \alpha_1+\cdots+\alpha_b+\flex{b}{\alpha}$, for $1 \leq b \leq n$.
\end{itemize}
\end{lemma}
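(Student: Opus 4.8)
The plan is to prove both parts by producing, for each $b$, a southwest rectangle in $D(\alpha)$ whose forced entries (via Lemma~\ref{lemma:rectangle} and Lemma~\ref{lemma:rowfilledrowval}) pin down enough of $T$ to bound the partial sums of $\mu$. For part (I), the cleaner route is to observe that $\sum_{i=1}^b \mu_i$ counts the boxes of $T$ with entry in $\{1,\dots,b\}$, and to compare this with $\sum_{i=1}^b \alpha_i = \#\{{\sf x}\in D(\alpha): {\sf row}({\sf x})\le b\}$. By \qkt{1}, every box in rows $1,\dots,b$ has entry at most $b$, so it already contributes to $\mu_1+\cdots+\mu_b$; hence $\mu_1+\cdots+\mu_b \ge \alpha_1+\cdots+\alpha_b$, giving $\alpha\le_{\sf Dom}\mu$. (This direction does not even need the ${\overline{\sf KM}}$ hypothesis, only that $T$ is a quasi-key tableau — consistent with $\mathfrak{D}_\alpha$ having lowest term $x^\alpha$ in dominance order.)

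For part (II), the converse-style bound, I would count how many boxes of $T$ with entry at most $b$ can lie strictly above row $b$. Write $b\in\seg{m}{}$. The key is to control, for each row $r>b$, the number $N_r:=\#\{c: T(r,c)\le b\}$. If $r>b$ lies in a ``filled'' situation — namely $b\in\seg{1}{}$, or $b\in\seg{m}{2}\cup\seg{m}{3}$, or $b\in\seg{m}{1}$ with $\alpha_{i_{m-1}-1}\ge\alpha_b$ — then Lemma~\ref{lemma:rowfilledrowval}(a) forces row $b$ (and all earlier rows $\le b$) to be constant, so a southwest $b\times\alpha_b$ rectangle is filled as in Lemma~\ref{lemma:rectangle}; combined with \qkt{1}, \qkt{2} and Lemma~\ref{lemma:seesawLemma} one checks $N_r=0$ for all $r>b$, and then $\mu_1+\cdots+\mu_b=\alpha_1+\cdots+\alpha_b$, so (II) holds (with $\flex{b}{\alpha}$ needing only to be $\ge 0$, which it is by definition). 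The genuinely substantive case is $b\in\seg{m}{1}$ with $\alpha_{i_{m-1}-1}<\alpha_b$: here Lemma~\ref{lemma:rowfilledrowval}(b) forces only the leftmost $\alpha_{i_{m-1}-1}+1$ columns of row $b$ to be $b$'s, and Lemma~\ref{lemma:uniqueSmaller} says that in each column $c>\alpha_{i_{m-1}-1}+1$ there is at most one box in rows $\ge b$ with entry $<b$. Summing over the columns $\alpha_{i_{m-1}-1}+2\le c\le \alpha_b$ (together with \qkt{4} and the structure of $\rmax{b}{\alpha}$, $\rmin{b}{\alpha}$ to identify exactly which columns can host such a box and to see the bound is $\alpha_{\rmax{b}{\alpha}}-\alpha_{\rmin{b}{\alpha}}-1$) shows that the number of ``extra'' boxes with entry $\le b$ sitting above row $b$ is at most $\flex{b}{\alpha}$. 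Since $\mu_1+\cdots+\mu_b$ equals $\alpha_1+\cdots+\alpha_b$ plus exactly this count of boxes above row $b$ with small entry, minus any boxes in rows $\le b$ with entry $>b$ (of which there are none, by \qkt{1}), we get $\mu_1+\cdots+\mu_b\le\alpha_1+\cdots+\alpha_b+\flex{b}{\alpha}$.

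The main obstacle I anticipate is the bookkeeping in the last case: translating ``at most one small box per column, over the relevant range of columns'' into precisely the quantity $\flex{b}{\alpha}=\alpha_{\rmax{b}{\alpha}}-\alpha_{\rmin{b}{\alpha}}-1$. One must argue that the columns $c$ with $\alpha_{i_{m-1}-1}+1 < c \le \alpha_b$ that can actually contain a box of entry $<b$ in a row $\ge b$ are governed by $\rmin{b}{\alpha}$ (which row first ``drops below'' column $c$ going up from row $i_{m-1}-1$) and by $\rmax{b}{\alpha}$ (how far right row $b+1$, or the next segment, extends relative to $\alpha_{i_m}$), and that \qkt{4} rules out the remaining columns. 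Carefully checking that $\rmin{b}{\alpha}$ is well-defined as the relevant row — using Lemma~\ref{lemma:segPartsRange}(e) and the $(0,1,2)$-avoidance to see $\alpha_b\ge\alpha_{i_{m-1}-1}$ forces the constant block of row $b$ to have the claimed width — is where the ${\overline{\sf KM}}$ hypotheses get used again, and is the part that requires the most care rather than the most cleverness.
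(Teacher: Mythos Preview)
Your treatment of (I) is correct and is exactly the paper's argument. Your overall plan for (II) --- count the ``extra'' boxes $F_\alpha(b)=\{{\sf x}:T({\sf x})\le b,\ {\sf row}({\sf x})>b\}$ and bound $\#F_\alpha(b)\le\flex{b}{\alpha}$ --- is also the paper's plan.

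The gap is in your case split. You assert that when $b\in\seg{1}{}$, or $b\in\seg{m}{2}\cup\seg{m}{3}$, or $b\in\seg{m}{1}$ with $\alpha_{i_{m-1}-1}\ge\alpha_b$, one has $N_r=0$ for all $r>b$ (equivalently $\#F_\alpha(b)=0$). This is false. The $b\times\alpha_b$ southwest rectangle only controls columns $1,\dots,\alpha_b$; rows $r>b$ with $\alpha_r>\alpha_b$ can carry entries $\le b$ in columns beyond $\alpha_b$. For a concrete counterexample, take $\alpha=(5,3,2,6)\in\overline{\sf KM}_4^{\ge 1}$ and $b=2\in\seg{1}{2}$: here $\flex{2}{\alpha}=\alpha_4-\alpha_2-1=2$, and indeed one can have $T(4,5)=T(4,6)=2$ (the only constraints from \qkt{4} force $T(4,5),T(4,6)>1$, nothing more). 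The same phenomenon occurs for $b\in\seg{m}{3}$: there $\flex{b}{\alpha}=\alpha_{i_m}-\alpha_{i_m-1}-1$, which is positive whenever the ascent at $i_m$ has size $\ge 2$ (e.g.\ $b=2$ in the running example $\alpha=(10,5,12,\dots)$, where $\flex{2}{\alpha}=6$). Lemma~\ref{lemma:seesawLemma} does not rescue you here: it only bounds entries in rows strictly above $i_m$, not in row $i_m$ itself.

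What the paper does instead is treat all $b$ uniformly via two cases on $\alpha_{i_{m-1}-1}\gtrless\alpha_b$. In each case it first shows every ${\sf y}\in F_\alpha(b)$ satisfies $\alpha_{\rmin{b}{\alpha}}+2\le{\sf col}({\sf y})\le\alpha_{\rmax{b}{\alpha}}$ (the upper bound uses Lemma~\ref{lemma:seesawLemma} to confine ${\sf row}({\sf y})$, the lower bound uses the rectangle and \qkt{4}), and then shows at most one ${\sf y}$ per column. Crucially, in the case $\alpha_{i_{m-1}-1}\ge\alpha_b$ the uniqueness-per-column step invokes Lemma~\ref{lemma:uniqueSmaller} not at row $b$ but at row $i_m\in\seg{m+1}{}$ (after first showing rows $b+1,\dots,i_m-1$ contain no small entries). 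Your proposal only deploys Lemma~\ref{lemma:uniqueSmaller} in the ``substantive'' case, and so misses the mechanism that actually yields the bound $\flex{b}{\alpha}$ when $b\in\seg{m}{2}\cup\seg{m}{3}$.
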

\begin{proof}
(I): By $\qkt{1}$, $\alpha_1+\cdots+\alpha_b \leq \mu_1+\cdots+\mu_b$, for $1\leq b\leq n$. Thus $\alpha \leq_{\sf Dom} \mu$.

(II):
Any $T \in {\sf qKT}(\alpha)$ only uses entries $\leq b$ in the first $b$ rows. Thus
\begin{equation}
\label{eqn:June22ggg}
\#\{{\sf x}:T({\sf x})\leq b, {\sf row}({\sf x})\leq b\}=\alpha_1+\ldots +\alpha_b.
\end{equation}
Define 
\begin{equation}
\label{eqn:flexset}
F_{\alpha}(b) = \{{\sf x}:T({\sf x})\leq b, {\sf row}({\sf x}) > b\}.
\end{equation}
We claim that 
\begin{equation}
\label{eqn:June22flexmeaning}
\#F_{\alpha}(b)\leq \flex{b}{\alpha}.
\end{equation}
Clearly (\ref{eqn:June22ggg}) combined with (\ref{eqn:June22flexmeaning}) proves (II).

It remains to prove (\ref{eqn:June22flexmeaning}). Fix $b$; thus
$b \in \seg{m}{}$ for some $1\leq m \leq k+1$. Let ${\sf y}\in F_{\alpha}(b)$.
If $b \notin \seg{m}{3}$ then $b<i_{m}-1$. Lemma \ref{lemma:seesawLemma} asserts that $b$ cannot appear in rows
strictly greater than $i_m$, \emph{i.e.}, $i_m \geq {\sf row}({\sf y})$. Therefore, in view of the definition (\ref{eqn:flexset}), we have
\[b<{\sf row}({\sf y})\leq i_m.\] 
By definition of $\seg{m}{}$, $\max_{b<r\leq i_m}\{\alpha_r\}=\alpha_{{\sf rmax}_{\alpha}(b)}$. Thus 
\begin{equation}
\label{eqn:June22vvv}
{\sf col}({\sf y})\leq \alpha_{{\sf rmax}_{\alpha}(b)}.
\end{equation}
If $b \in \seg{m}{3}$, then since $\alpha$ is $(0,1,2)$ avoiding, 
$\max_{b<r\leq n}\{\alpha_r\}=\alpha_{i_m}=\alpha_{{\sf rmax}_{\alpha}(b)}$. Thus (\ref{eqn:June22vvv}) holds for all $b$.

We claim 
\begin{equation}
\label{eqn:June22theclaim}
{\sf col}({\sf y}) \geq \alpha_{\rmin{b}{\alpha}}+2.
\end{equation}

\noindent \textit{Case 1:} ($\alpha_{i_{m-1}-1} \geq \alpha_b$) 
By this case's assumption, and the definition (\ref{equation:rmin}), $\rmin{b}{\alpha}=b$. If $i_{m-1}\leq s\leq b$
then by definition of $\seg{m}{}$,
\begin{equation}
\label{eqn:June22qwe}
\alpha_s\geq \alpha_b=\alpha_{\rmin{b}{\alpha}}.
\end{equation}
If $s= i_{m-1}-1$, then (\ref{eqn:June22qwe}) holds by the assumed inequality of this case. Finally, if 
$s<i_{m-1}-1$ then $\alpha_s\geq \alpha_{i_{m-1}-1}$ since $\alpha$ avoids $(0,1,2)$; hence (\ref{eqn:June22qwe})
holds again. By Lemma~\ref{lemma:rectangle}, $T(r,c)=r$ for $1\leq r\leq b$ and $1\leq c\leq \alpha_b$.
Thus by \qkt{2}, $T(r,c)>b$ whenever $r>b$ and $c\leq \alpha_b$.  Lastly, if there exists $r>b$ such that $\alpha_r > \alpha_{\rmin{b}{\alpha}}$, then \qkt{4} implies $T(r,\alpha_{\rmin{b}{\alpha}} +1)>b$. Therefore (\ref{eqn:June22theclaim}) holds.

\noindent \textit{Case 2:} ($\alpha_{i_{m-1}-1} < \alpha_b$) 
By this case's assumption, and the definition (\ref{equation:rmin}), $\rmin{b}{\alpha}=i_{m-1}-1$.
If $i_{m-1}\leq s<b$ then $\alpha_s\geq \alpha_{b}\geq \alpha_{i_{m-1}-1}=\alpha_{\rmin{b}{\alpha}}$, and in particular:
\begin{equation}
\label{eqn:June22cdef}
\alpha_s\geq \alpha_{i_{m-1}-1}=\alpha_{\rmin{b}{\alpha}}.
\end{equation}
If $s=i_{m-1}-1$ then (\ref{eqn:June22cdef}) holds trivially. Finally if $1\leq s<i_{m-1}-1$ then since
$\alpha$ is $(0,1,2)$-avoiding, $\alpha_s\geq \alpha_{i_{m-1}-1}$ and (\ref{eqn:June22cdef}) again holds.
Hence by Lemma~\ref{lemma:rectangle}, $T(r,c)=r$ for $1\leq r\leq b$ and $1\leq c\leq \alpha_{i_{m-1}-1}$.
Thus by \qkt{2}, $T(r,c)>b$ whenever $r>b$ and $c\leq \alpha_{i_{m-1}-1}$. If there exists $r>b$ such that $\alpha_r > \alpha_{\rmin{b}{\alpha}}=\alpha_{i_{m-1}-1}$, then \qkt{4} implies $T(r,\alpha_{\rmin{b}{\alpha}} +1)>b$. Therefore (\ref{eqn:June22theclaim}) holds.

By (\ref{eqn:June22vvv}) and (\ref{eqn:June22theclaim}),
\begin{equation}
\label{eqn:June23aaa}
\alpha_{\rmin{b}{\alpha}}+2 \leq {\sf col}({\sf y}) \leq \alpha_{\rmax{b}{\alpha}}.
\end{equation}

Therefore ${\sf y}$ can appear in $\leq \flex{b}{\alpha}$ columns of $T$. If we show at most one ${\sf y} \in F_{\alpha}(b)$ can have ${\sf col}({\sf y})=c$, for $\alpha_{\rmin{b}{\alpha}}+2 \leq  c \leq \alpha_{\rmax{b}{\alpha}}$ then (\ref{eqn:June22flexmeaning}) follows.

\noindent \textit{Case 1:} ($\alpha_{i_{m-1}-1} \geq \alpha_b$) 
By this case's assumption, and definition (\ref{equation:rmin}), $\rmin{b}{\alpha}=b$. If $\alpha_{\rmin{b}{\alpha}} \geq \alpha_{\rmax{b}{\alpha}}$, then by (\ref{equation:flex}) $\flex{b}{\alpha}=0$. By (\ref{eqn:June23aaa}) $\#F_{\alpha}(b)=0$. And hence, $\#F_{\alpha}(b) \leq \flex{b}{\alpha}$, proving (\ref{eqn:June22flexmeaning}).

Thus we assume $\alpha_{\rmin{b}{\alpha}} < \alpha_{\rmax{b}{\alpha}}$. If $b \in \seg{m}{3}$, then $\rmax{b}{\alpha}=i_m=b+1$. Otherwise, $\alpha_{b+1} \leq \alpha_b = \alpha_{\rmin{b}{\alpha}} < \alpha_{\rmax{b}{\alpha}}$, where the first inequality follows from the definition of $\seg{m}{}$. This implies $\rmax{b}{\alpha} \neq b+1$. Thus, by (\ref{equation:rmax}), $\rmax{b}{\alpha}=i_m$. 

Let $b < s < i_m$. Then $\alpha_{i_{m-1}-1} \geq \alpha_b \geq \alpha_s$. So Lemma~\ref{lemma:rowfilledrowval}(a), applied to $s$ says that only $s$'s appear in row $s$; in particular
there are no entries $\leq b$ in row $s$. Thus if ${\sf y} \in F_{\alpha}(b)$, then 
\begin{equation}
\label{eqn:June23abc}
{\sf row}({\sf y}) \geq i_m = \rmax{b}{\alpha}.
\end{equation}

We apply Lemma~\ref{lemma:uniqueSmaller} to row $i_m \in \seg{m+1}{}$, and $c \geq \alpha_{\rmin{b}{\alpha}}+2 = \alpha_{b}+2 > \alpha_{i_{m}-1} + 1$ (the final inequality follows from the definition of $\seg{m}{}$). We conclude that at most one ${\sf y} \in F_{\alpha}(b)$ can have ${\sf row}({\sf y}) \geq i_m$ and ${\sf col}({\sf y})=c$. 
This, combined with (\ref{eqn:June23abc}), implies (\ref{eqn:June22flexmeaning}).

\noindent \textit{Case 2:} ($\alpha_{i_{m-1}-1} < \alpha_b$) 
By this case's assumption, and (\ref{equation:rmin}), $\rmin{b}{\alpha}=i_{m-1}-1$. If $\alpha_{\rmin{b}{\alpha}} \geq \alpha_{\rmax{b}{\alpha}}$, we get $\#F_{\alpha}(b)=0=\flex{b}{\alpha}$ in exactly the same way as the previous case. 

Hence we again assume $\alpha_{\rmin{b}{\alpha}} < \alpha_{\rmax{b}{\alpha}}$. If $\rmax{b}{\alpha}=b+1$, then by our assumption $\alpha_{i_{m-1}-1} = \alpha_{\rmin{b}{\alpha}} < \alpha_{\rmax{b}{\alpha}} = \alpha_{b+1}$. Thus we may apply Lemma~\ref{lemma:uniqueSmaller} to row $b+1$, and $c \geq \alpha_{\rmin{b}{\alpha}}+2 > \alpha_{i_{m-1}-1} + 1$. We conclude that at most one ${\sf y} \in F_{\alpha}(b)$ can have ${\sf col}({\sf x})=c$. 

Otherwise, $\rmax{b}{\alpha}=i_m$. 

\noindent
\emph{Subcase 2.1} ($\alpha_{i_{m-1}-1} \geq \alpha_{b+1}$) Let $b < s < i_m$. Then $\alpha_{i_{m-1}-1} \geq \alpha_{b+1} \geq \alpha_s$. Thus Lemma~\ref{lemma:rowfilledrowval}(a), applied to $s$ 
says that only $s$'s appear in row $s$; 
there are no entries $\leq b$ in row $s$. Thus if ${\sf y} \in F_{\alpha}(b)$, then 
\begin{equation}
\label{eqn:June23abc2}
{\sf row}({\sf y}) \geq i_m = \rmax{b}{\alpha}.
\end{equation}

We have $\alpha_{i_{m-1}-1} = \alpha_{\rmin{b}{\alpha}} < \alpha_{\rmax{b}{\alpha}}=\alpha_{i_m}$. We apply Lemma~\ref{lemma:uniqueSmaller} to row $i_m \in \seg{m+1}{}$, and $c \geq \alpha_{\rmin{b}{\alpha}}+2 > \alpha_{i_{m-1}-1} + 1 \geq \alpha_{i_{m}-1} + 1$ (the final inequality holds since $\alpha$ avoids $(0,1,2)$). The lemma implies  that at most one ${\sf y} \in F_{\alpha}(b)$ can have ${\sf row}({\sf y}) \geq i_m$ and ${\sf col}({\sf x})=c$. This, combined with (\ref{eqn:June23abc2}), implies (\ref{eqn:June22flexmeaning}).

\noindent
\emph{Subcase 2.2} ($\alpha_{i_{m-1}-1} < \alpha_{b+1}$) Apply Lemma~\ref{lemma:uniqueSmaller} to row $b+1$, and $c \geq \alpha_{\rmin{b}{\alpha}}+2 > \alpha_{i_{m-1}-1} + 1$. So, at most one ${\sf y} \in F_{\alpha}(b)$ has ${\sf row}({\sf y}) \geq b + 1$ and ${\sf col}({\sf x})=c$, proving (\ref{eqn:June22flexmeaning}). 
\end{proof}

Given $\gamma \in {\sf lswap}(\alpha)$, define a \emph{left swap sequence} of $\gamma$ to be $\gamma^{(0)},\ldots,\gamma^{(t)}$ such that $\gamma^{(0)} = \alpha$, $\gamma^{(t)} = \gamma$, and $\gamma^{(i+1)}$ is a left swap of $\gamma^{(i)}$ for $0 \leq i < t$.

\begin{lemma}
\label{lemma:lswapSeq}
Let $\gamma,\tau \in {\sf lswap}(\alpha)$ with $\gamma >_{lex} \tau$ and $b=\min \{ i : \gamma_i > \tau_i \}$. 
\begin{itemize}
\item[(I)] There exists a left swap sequence of $\gamma$ equal to $\gamma^{(0)},\ldots,\gamma^{(t)}$ and of $\tau$ equal to 
$\tau^{(0)},\ldots,\tau^{(u)}$ such that $\beta = \gamma^{(i)} = \tau^{(j)}$ with $\beta_s = \gamma_s = \tau_s$ for all $1 \leq s < b$.
\item[(II)] $\gamma,\tau\in {\sf lswap}(\beta)$.
\item[(III)] No left swap sequence from $\beta$ to $\gamma$ (or $\tau$) involves the indices $1,2,\ldots,b-1$.
\end{itemize}
\end{lemma}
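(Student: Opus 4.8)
The plan is to derive (II) and (III) from (I) together with an elementary monotonicity fact, and to prove (I) by induction on $n$, with all the real content concentrated in one ``synchronization'' step.

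\emph{Monotonicity fact:} in any left-swap sequence, once the entries in positions $1,\ldots,r-1$ have stabilized, the entry in position $r$ only ever increases. (A left swap $(i,j)$ with $i<j$ affects position $r$ only if $r\in\{i,j\}$; it strictly increases position $r$ when $r=i$, while the only way it decreases position $r$ is $r=j>i$, which also modifies position $i<r$.) Inducting on $r$, it follows that if two compositions agree in positions $1,\ldots,r$ then no left-swap sequence between them ever involves one of those positions. Granting (I): taking $r=b-1$ and the pairs $(\beta,\gamma)$ and $(\beta,\tau)$ — which agree in positions $1,\ldots,b-1$ by (I) — gives (III); and (II) is immediate, since the tail $\gamma^{(i)},\ldots,\gamma^{(t)}$ of the sequence produced in (I) witnesses $\gamma\in{\sf lswap}(\beta)$, and likewise $\tau\in{\sf lswap}(\beta)$.

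For (I), induct on $n$. If $b=1$, take $\beta=\alpha$: the only non-vacuous requirement is $\gamma,\tau\in{\sf lswap}(\alpha)$, which is assumed. If $b\ge 2$ then $\gamma_1=\tau_1=:c$, and $c\ge\alpha_1$ by the monotonicity fact. If $c=\alpha_1$, then no left-swap sequence $\alpha\to\gamma$ or $\alpha\to\tau$ touches position $1$, so deleting position $1$ from $\alpha,\gamma,\tau$ produces an instance of the lemma of length $n-1$ with first-difference index $b-1$; apply the inductive hypothesis and reinstate position $1$ (which is never touched), taking the concatenated sequences. If $c>\alpha_1$, it suffices to produce some $\alpha^{\ast}\in{\sf lswap}(\alpha)$ with $\alpha^{\ast}_1=c$ and $\gamma,\tau\in{\sf lswap}(\alpha^{\ast})$: then $\alpha^{\ast}$ has first entry $c=\gamma_1=\tau_1$, the previous case applies with $\alpha^{\ast}$ replacing $\alpha$, and one prepends a left-swap sequence $\alpha\to\alpha^{\ast}$. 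Thus everything comes down to the \emph{synchronization claim}: if $\gamma,\tau\in{\sf lswap}(\alpha)$ with $\gamma_1=\tau_1=c>\alpha_1$, then some $\alpha^{\ast}\in{\sf lswap}(\alpha)$ satisfies $\alpha^{\ast}_1=c$ and $\gamma,\tau\in{\sf lswap}(\alpha^{\ast})$.

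For the synchronization claim I would use the standard description of the left-swap order (a Bruhat-type order on the relevant coset space $S_n/S_{\lambda}$): $\delta\in{\sf lswap}(\alpha)$ iff $\delta$ rearranges $\alpha$ and $\operatorname{sort}(\delta_1,\ldots,\delta_\ell)\trianglerighteq\operatorname{sort}(\alpha_1,\ldots,\alpha_\ell)$ in the dominance order on partitions, for all $\ell$ — one direction because a single left swap only replaces a prefix entry by a larger one, the other by a standard exchange induction. One then wants $\alpha^{\ast}$ to be a common lower bound of $\gamma$ and $\tau$ lying above $\alpha$ and having first entry $c$; equivalently, a saturated chain $P_1\subset P_2\subset\cdots\subset P_n=\operatorname{sort}(\alpha)$ in Young's lattice with $|P_\ell|=\ell$, $P_1=(c)$, and $\operatorname{sort}(\alpha_1,\ldots,\alpha_\ell)\trianglelefteq P_\ell\trianglelefteq\operatorname{sort}(\gamma_1,\ldots,\gamma_\ell)\wedge\operatorname{sort}(\tau_1,\ldots,\tau_\ell)$ for each $\ell$, from which $\alpha^{\ast}$ is read off the one-box differences. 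Since $\gamma$ and $\tau$ agree in positions $1,\ldots,b-1$, the sandwich forces $P_\ell=\operatorname{sort}(\gamma_1,\ldots,\gamma_\ell)=\operatorname{sort}(\tau_1,\ldots,\tau_\ell)$ for $\ell<b$, which propagates to $\alpha^{\ast}_s=\gamma_s=\tau_s$ for $s<b$ and gives the first-entry requirement at $\ell=1$. The hard part is building that chain: one cannot simply set $P_\ell$ equal to the dominance-meet $\operatorname{sort}(\gamma_1,\ldots,\gamma_\ell)\wedge\operatorname{sort}(\tau_1,\ldots,\tau_\ell)$, since those need not form a saturated chain — the left-swap order is genuinely not a lattice, as the hexagonal Bruhat order of $S_3$ already shows, so two elements can have a common lower bound yet no meet. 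The fix I would pursue builds the chain from the top down, at step $\ell\to\ell-1$ deleting a box from $P_\ell$ so as to keep $P_{\ell-1}$ inside the required sandwich; feasibility of this greedy choice is exactly where the hypotheses $\gamma,\tau\succeq\alpha$ and the agreement of $\gamma,\tau$ in positions $1,\ldots,b-1$ enter, and verifying it carefully is the step I expect to be the main obstacle. A more combinatorial alternative, closer in spirit to the rest of the paper, is to normalize a left-swap sequence $\alpha\to\gamma$ so that all its moves involving position $1$ come first and consecutively (a short exchange argument on adjacent moves), take $\alpha^{\ast}$ to be the state just after a \emph{maximal} such initial batch, and check that this $\alpha^{\ast}$ depends weakly enough on $\gamma$ that the same choice also works for $\tau$ — the delicate point there as well.
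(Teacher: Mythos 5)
Your reduction of (II) and (III) to (I) via the monotonicity fact is correct and is if anything cleaner than the paper's argument for (III). The skeleton of your proof of (I) — induct on $n$, peel off position $1$ when $\gamma_1=\tau_1=\alpha_1$, and otherwise reduce to a ``synchronization'' step producing an $\alpha^{\ast}\in{\sf lswap}(\alpha)$ with $\alpha^{\ast}_1=\gamma_1=\tau_1$ and $\gamma,\tau\in{\sf lswap}(\alpha^{\ast})$ — is also the structure of the paper's proof. But the synchronization claim is precisely the content of the lemma, and you have not proved it: both alternatives you sketch end with ``the delicate point'' or ``the main obstacle'' unresolved, so the proposal is incomplete exactly where it needs to do work. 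The paper closes this gap with a \emph{canonical, explicit} choice of $\alpha^{\ast}$: in the permutation case, set $T=\gamma(1)=\tau(1)$, locate $T=\alpha(j)$, and take $\alpha'$ to be $\alpha$ with $T$ moved into position $1$; then $\alpha'\leq\gamma,\tau$ follows from the tableau criterion (the paper's equation for Bruhat order), and the essential point is that $\alpha'$ depends only on $\alpha$ and $T$ — not on $\gamma$ or $\tau$ individually — so it serves both simultaneously. The general composition case is then handled by lifting to ${\mathfrak S}_n$ via a standardization compatible with the $\alpha$-equivalence classes, which is exactly the bookkeeping you would have to do in either of your routes.

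There is also a factual error in the machinery you invoke. You characterize ${\sf lswap}(\alpha)$ by ``$\operatorname{sort}(\delta_1,\ldots,\delta_\ell)\trianglerighteq\operatorname{sort}(\alpha_1,\ldots,\alpha_\ell)$ in the dominance order on partitions,'' but the correct criterion is \emph{componentwise} comparison of the sorted prefixes (Gale order, i.e.\ the tableau criterion for Bruhat order, as in the paper). Dominance is strictly weaker. For instance with $\alpha=(2,3,1,4)$ and $\delta=(4,1,2,3)$, the dominance test passes at every $\ell$ (at $\ell=2$: $(4,1)$ vs $(3,2)$ gives partial sums $4\geq3$, $5\geq5$), yet $\delta\notin{\sf lswap}(\alpha)$: position $1$ starts at $2$ and only increases under left swaps, so position $2$ can never drop to $1$. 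The componentwise test correctly rejects this at $\ell=2$, since the increasing sorts $(1,4)$ and $(2,3)$ fail $1\geq 2$. Because your chain-building argument for $\alpha^{\ast}$ is phrased in terms of the wrong order (and further conflates multiset containment with Young's lattice), that part of the plan would need to be rebuilt on the correct characterization before the greedy construction could even be attempted.
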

\begin{proof}
(I):  Given $(x_1,\ldots,x_k)\in {\mathbb Z}_{\geq 0}^k$ let $(x_1',\ldots,x_k')$ be the coordinates sorted into 
weakly increasing order ($x_1'\leq x_2'\leq\ldots \leq x_k'$). Given $(x_1,\ldots,x_k), (y_1,\ldots,y_k)\in {\mathbb Z}_{\geq 0}^k$ we write 
\[(x_1,\ldots,x_k)\preceq (y_1,\ldots,y_k) \text{\  if $x_i'\leq y_i'$ for $1\leq i\leq k$.}\]

Recall (see, e.g., \cite[Proposition~2.1.11]{Manivel}), if $u,v\in {\mathfrak S}_n$ then 
\begin{equation}
\label{eqn:June24bruhat}
u\leq v  \iff (u(1),u(2),\ldots,u(k))\preceq
(v(1),v(2),\ldots,v(k)) \text{\  for $1\leq k\leq n$.}
\end{equation}
In particular, if 
\begin{equation}
\label{eqn:June24people}
u\leq v \Rightarrow u(1)\leq v(1)
\end{equation}

\noindent
\emph{Special Case:} ($\alpha\in {\mathfrak S}_n$) Hence $\alpha\leq \gamma,\tau$ (Bruhat order). Induct on $n$, the base $n=1$ being trivial. Suppose $n>1$. If $b=0$ (\emph{i.e.}, $\gamma(1)\neq \tau(1)$), $\beta=\alpha$ satisfies
(I). Thus assume $b\geq 1$, and let $T=\gamma(1)=\tau(1)$. Thus, by (\ref{eqn:June24people}), 
$\alpha(1)\leq T$. Let $\alpha(j)=T$. There exists a sequence of left swaps that show
\[\alpha\leq \alpha':=T \ \alpha(2) \ \alpha(3) \ldots \alpha(j-1)\  \alpha(j+1) \ldots \alpha(n).\]
It is straightforward from (\ref{eqn:June24bruhat}) that $\alpha'\leq \gamma,\tau$. Let
$\overline{\alpha'}, \overline{\gamma}, \overline{\tau}$ be the list of rightmost $n-1$ entries of $\alpha',\gamma,\tau$ (respectively); these are permutations of ${\mathfrak S}_{n-1}$ on $[n]-\{T\}$. By induction, obtain $\overline{\beta}\in {\mathfrak S}_{n-1}$
satisfying (I) with respect to $\overline{\alpha'}, \overline{\gamma}, \overline{\tau}$. Then 
\[\beta:=T \overline{\beta}(1) \overline{\beta}(2)\ldots \overline{\beta}(n-1)\in {\sf lswap}(\alpha)\]
satsfies (I) with respect to $\alpha,\beta,\gamma$, as desired (the two swap sequences to go from $\alpha\to \gamma$
and $\alpha\to \tau$ being the ``concatentation'' of the sequence from $\alpha\to \alpha'$ with the two sequences
that send $\overline{\alpha'}\to \overline{\gamma}$ and $\overline{\alpha'} \to \overline\tau$, interpreted in the obvious manner).

\noindent
\emph{General Case:} ($\alpha\in {\sf Comp}_n$)  Pick any 
$\widehat{\alpha}\in {\mathfrak S}_n$ with the property that 
\begin{equation}
\label{eqn:June24swap}
\widehat{\alpha}(i)<\widehat{\alpha}(j)\iff \alpha(i)\leq \alpha(j).
\end{equation}

Define $\widetilde\gamma\in {\mathfrak S}_n$ by applying the same  sequence (in terms of positions) of left swaps to $\widetilde{\alpha}$ used to
obtain $\gamma$ from $\alpha$. Similarly, define $\widetilde\tau$. By the definition of left swaps and (\ref{eqn:June24swap}), 
\[\widehat{\alpha}\leq \widetilde\gamma,\widetilde\tau
\]
(if $u,v\in {\mathfrak S}_n$ and $v$ is obtained from $u$ by a left swap, then $u\leq v$; this follows from, \emph{e.g.}, (\ref{eqn:June24bruhat})). Call two labels $i,j\in [n]$ to be $\alpha$-equivalent ($i\equiv j$) if $\alpha(i)=\alpha(j)$. Now apply left swaps to $\widehat\gamma$ so that all equivalent labels  are
in decreasing order. Similarly one defines $\widehat\tau$.

\begin{example} If $\alpha=(2,2,4,2,2,4)$ then $\widehat{\alpha}=125346$. Consider $\alpha=(2,2,4,{\color{blue} 2},2,{\color{blue} 4})\to ({\color{blue} 2},2,4,{\color{blue} 4},2,2)\to (4,2,4,2,2,2)=\gamma$. 
Then $\widetilde\gamma=625143$. Here $\{1,2,3,4\}$ and $\{5,6\}$ are the two $\alpha$-equivalence classes.
So $\widehat\gamma=645321$.\qed
\end{example}

By simple considerations about Bruhat order, 
\begin{equation}
\label{eqn:July11ggg}
\widehat{\alpha}\leq \widetilde\gamma\leq \widehat\gamma, \text{ \ and 
$\widehat{\alpha}\leq \widetilde\tau\leq \widehat\tau$}.
\end{equation}

By construction, 
\begin{equation}
\label{eqn:June24sss}
\widehat\gamma>_{lex} \widehat\tau
\end{equation}
and
\begin{equation}
\label{eqn:June24ttt}
\widehat\gamma(i)=\widehat\tau(i), \text{ \ for $1\leq i \leq b$.}
\end{equation}
In view of (\ref{eqn:July11ggg}), (\ref{eqn:June24sss}) and (\ref{eqn:June24ttt}) we can apply the Special Case
to construct $\widehat\beta\in {\mathfrak S}_n$, and left swap sequences, that satisfy (I) with respect to $\widehat\alpha, \widehat\gamma,\widehat\tau$.

Define $\beta$ from $\widehat\beta$ by replacing the label $i$ with $\alpha(i)$. To define the swap sequence from
$\alpha$ to $\beta$ we apply the same left swaps (\emph{i.e.}, interchange the same positions) as the sequence
from $\widehat{\alpha}$  to $\widehat{\beta}$, with the exception that we skip left swaps of the underlying permutations that involve equivalent labels. 
Similarly, one defines continuation of this sequence to $\widehat \gamma$, and separately, to $\widehat\tau$. The claim
follows.

(II): This is trivial from (I).

(III): Suppose such a left swap exists, say with $i\in [1,b-1]$. We may assume $i$ is the minimal such
index. Then $\tau_i=\gamma_i=\beta_i$ is replaced with a strictly larger number, and it follows that
$\gamma_i>\beta_i$, a contradiction.
\end{proof}

\begin{lemma}
\label{lemma:minLswap}
Assume $\alpha\in {\overline {\sf KM}}_n^{\geq 1}$.
Let $\gamma,\tau \in {\sf lswap}(\alpha)$ with $\gamma >_{lex} \tau$ and $b=\min \{ i : \gamma_i > \tau_i \}$. Let $b \in \seg{m}{}(\beta)$ and $\beta$ be from Lemma~\ref{lemma:lswapSeq}. There exists a (minimum) index $r > b$ such that
\begin{equation}
\label{eqn:June25zzz}
\gamma_b = \beta_r > \beta_b.
\end{equation}

Let $i_1(\beta)<i_2(\beta)<\ldots<i_{m}(\beta)<\ldots$ be such that $\alpha_{i_r(\beta)-1}<\alpha_{i_{r}(\beta)}$. The indices $b,r$ satisfy one of the following.
\begin{itemize}
\item[(A)] $b \in \seg{m}{2}(\beta)$ and $r \geq i_{m}(\beta)$,
\item[(B)] $b\in \seg{m}{3}(\beta)$ and $r \in \seg{m+1}{1}(\beta)$,
\item[(C)] $b\in \seg{m}{3}(\beta)$ and $r \geq i_{m+1}(\beta)$.
\end{itemize}
\end{lemma}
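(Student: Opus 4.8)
## Proof plan for Lemma~\ref{lemma:minLswap}

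\textbf{Setup and the existence of $r$.} The plan is to work entirely with $\beta$, using the fact (Lemma~\ref{lemma:lswapSeq}(II),(III)) that $\gamma,\tau\in{\sf lswap}(\beta)$ via sequences fixing the first $b-1$ positions, so that $\gamma_b>\tau_b=\beta_b$ forces position $b$ to be strictly increased somewhere along the $\beta\to\gamma$ sequence. The first step is to identify the value $\gamma_b$ inside $\beta$: since a left swap only moves a value from a later position to position $b$, and since no swap touches positions $<b$, the value $\gamma_b$ must occur in $\beta$ at some position $r>b$, and moreover $\beta_r=\gamma_b>\beta_b$. Taking $r$ minimal with $\beta_r>\beta_b$ and $\beta_r\ge\gamma_b$ (or simply the position that the swap sequence brings forward) establishes \eqref{eqn:June25zzz}. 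I would also note here that, because $\alpha\in{\overline{\sf KM}}_n^{\geq1}$ and (by Proposition~\ref{prop:exhaustiveQKMultFreeCases}'s argument) $\beta\in{\overline{\sf KM}}_n^{\geq1}$ as well, all the segment lemmas of Section~\ref{sec:6.2} apply to $\beta$.

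\textbf{Locating $b$ among the segment parts of $\beta$.} By Lemma~\ref{lemma:segPartsRange}(a), $b\in\seg{m}{1}(\beta)\sqcup\seg{m}{2}(\beta)\sqcup\seg{m}{3}(\beta)$. The goal is to rule out $b\in\seg{m}{1}(\beta)$. Suppose $b\in\seg{m}{1}(\beta)$. Then $\beta_b\geq\beta_{i_m(\beta)}$, and by definition of $\seg{m}{}(\beta)$ every position $s$ with $i_{m-1}(\beta)\le s\le b$ has $\beta_s\geq\beta_b$. Since $\beta_r>\beta_b$ with $r>b$, the position $r$ lies beyond the current segment; but then comparing $\beta_{i_{m-1}(\beta)-1}$ (which by the $(0,1,2)$-avoidance of $\beta$ is $\geq\beta_{i_{m-1}(\beta)}\geq\beta_b$... here one must be careful) — the cleaner route is: $\gamma$ is obtained from $\beta$ by left swaps, and $\gamma_b=\beta_r$; I claim this makes $\gamma$ contain a $(0,1,2)$ pattern. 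Indeed, in $\beta$ there is some $b'<b$ with $\beta_{b'}<\beta_b$ (if $b>i_{m-1}(\beta)$ this is immediate from strict-decrease within the part $\seg{m}{1}$... no). I expect the correct argument is to show that if $b\in\seg{m}{1}(\beta)$ then row $b$ of every $T\in{\sf qKT}(\beta)$ is forced to contain only $b$'s on the whole first $\beta_b$ columns (or up to $\beta_{i_{m-1}(\beta)-1}+1$ columns), and combine this with the dominance bound of Lemma~\ref{lemma:dominanceInterval}(II) to contradict $\gamma=\beta_r>\beta_b$ being a left-swap target — more precisely, that $\gamma\le_{\sf Dom}\beta$ fails, contradicting minimality of $\beta$ in its ${\sf flat}$-class or the $(0,1,2)$-avoidance directly. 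So the second step concludes $b\in\seg{m}{2}(\beta)\cup\seg{m}{3}(\beta)$.

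\textbf{Pinning down $r$ in each case.} Once $b\in\seg{m}{2}(\beta)$: by definition $b<i_m(\beta)-1$ and $\beta_b<\min(\beta_{i_{m-1}(\beta)-1},\beta_{i_m(\beta)})$. Since $\beta_r>\beta_b$, either $r$ lies in positions $b+1,\dots,i_m(\beta)-1$ (all of which, being in $\seg{m}{}(\beta)$ after $b$, satisfy $\beta_s\le\beta_b$ by weak decrease — contradiction), or $r\ge i_m(\beta)$; this gives case (A). Now suppose $b\in\seg{m}{3}(\beta)$, i.e. $b=i_m(\beta)-1$. Then $r\ge b+1=i_m(\beta)$. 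If $\beta_r\geq\beta_{i_{m+1}(\beta)}$ then by Lemma~\ref{lemma:segPartsRange} applied to segment $m+1$ (noting $\beta_r>\beta_b=\beta_{i_m(\beta)-1}$, so $r$ is not excluded by the $\seg{\cdot}{2}$ condition and $r\ne i_{m+1}(\beta)-1$ unless... check via $(0,1,2)$-avoidance) we get $r\in\seg{m+1}{1}(\beta)$, which is case (B); otherwise $\beta_r<\beta_{i_{m+1}(\beta)}$, and then since $\beta$ avoids $(0,1,2)$ and $\beta_{i_m(\beta)-1}<\beta_r<\beta_{i_{m+1}(\beta)}$ would be a pattern unless $r$ is past $i_{m+1}(\beta)$, forcing $r\ge i_{m+1}(\beta)$, which is case (C).

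\textbf{Main obstacle.} The delicate part is the second step: showing $b\notin\seg{m}{1}(\beta)$. The subtlety is that $\beta$ need not be the original $\alpha$, and ``$\gamma$ is a left-swap of $\beta$ increasing position $b$'' must be converted into a structural statement; I expect this to require either a direct $(0,1,2)$-pattern extraction in $\beta$ (using that some earlier position has a strictly smaller value than $\beta_b$, which is exactly the defining feature of $\seg{m}{2}$ and $\seg{m}{3}$ but fails for $\seg{m}{1}$ — whence the dichotomy), or an appeal to the minimality characterization of $\beta$ from Lemma~\ref{lemma:lswapSeq}. Handling the boundary sub-possibilities (whether $r$ equals $i_{m+1}(\beta)-1$, whether $\beta_r$ is exactly $\beta_{i_m(\beta)}+1$, etc.) via the forbidden patterns $(0,0,2,1)$, $(0,0,2,2)$, $(1,0,3,2)$, $(1,0,2,2)$ will also require the same careful case bookkeeping as in Lemma~\ref{lemma:1032avoidingConsequence}, and I would lean on that lemma (applied with $s=i_m(\beta)-1$, $r$) wherever possible to shorten the analysis.
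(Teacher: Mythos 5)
Your plan gets the overall shape of the argument right: you establish the existence of $r$ from the fact (via Lemma~\ref{lemma:lswapSeq}(III)) that no left swap in the $\beta\to\gamma$ sequence touches positions $<b$, so position $b$ can only be increased, forcing $\gamma_b>\beta_b$ and $\gamma_b=\beta_r$ for some $r>b$. Your treatment of case (A) and of the $\seg{m+1}{3}$ exclusion in the $b\in\seg{m}{3}(\beta)$ branch also matches the paper.

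However, there is a genuine gap at exactly the place you flag as the ``main obstacle'': you never succeed in ruling out $b\in\seg{m}{1}(\beta)$. Your first attempt compares against $\beta_{i_{m-1}(\beta)-1}$, whose inequality runs the wrong way (recall $\beta_{i_{m-1}(\beta)-1}<\beta_{i_{m-1}(\beta)}$); your second attempt (extracting a $(0,1,2)$ pattern from $\gamma$ using some $b'<b$) also breaks down as you notice; and the third is a speculative and unelaborated detour through quasi-key tableaux and Lemma~\ref{lemma:dominanceInterval}(II), which is not the paper's route and which you do not carry out. The actual argument is short and purely about $\beta$: if $b\in\seg{m}{1}(\beta)$ then $\beta_b\geq\beta_{i_m(\beta)}$, while $\beta_s\leq\beta_b$ for $b<s\leq i_m(\beta)$ by weak decrease within the segment, and for $s>i_m(\beta)$ one has $\beta_s\leq\beta_{i_m(\beta)}\leq\beta_b$ since otherwise $(\beta_{i_m(\beta)-1},\beta_{i_m(\beta)},\beta_s)\simeq(0,1,2)$. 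Thus $\beta_b\geq\beta_s$ for every $s>b$, contradicting $\beta_r>\beta_b$. You were one index away (you should compare against $i_m(\beta)-1$, not $i_{m-1}(\beta)-1$), but the gap is real because you abandoned this line and substituted an unrelated, unfinished alternative.

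A smaller but real imprecision: in the $b\in\seg{m}{3}(\beta)$ branch you split into (B)/(C) according to whether $\beta_r\geq\beta_{i_{m+1}(\beta)}$, but that is not the right dichotomy. Lemma~\ref{lemma:segPartsRange} only classifies $r$ into $\seg{m+1}{1},\seg{m+1}{2},\seg{m+1}{3}$ when $r\in\seg{m+1}{}(\beta)$, i.e.\ when $r<i_{m+1}(\beta)$. If $r\geq i_{m+1}(\beta)$ with $\beta_r=\beta_{i_{m+1}(\beta)}$, your rule would try to place $r$ in $\seg{m+1}{1}(\beta)$, which is impossible. The correct split is on $r<i_{m+1}(\beta)$ versus $r\geq i_{m+1}(\beta)$; in the former case one rules out $\seg{m+1}{2}$ (entries there are $<\beta_b<\beta_r$) and $\seg{m+1}{3}$ (would give $(\beta_{i_m(\beta)-1},\beta_{i_{m+1}(\beta)-1},\beta_{i_{m+1}(\beta)})\simeq(0,1,2)$), leaving $\seg{m+1}{1}$ as in (B).
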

\begin{proof}
By Lemma~\ref{lemma:lswapSeq}, $\gamma, \tau \in {\sf lswap}(\beta)$. Hence, by the reasoning in the proof of
that lemma (the characterization of Bruhat order) and the definition of $b$,
\begin{equation}
\label{eqn:June25fgh}
\gamma_b, \tau_b \geq \beta_b.
\end{equation}

If $\gamma_b = \beta_b$, then $\tau_b \geq \gamma_b$ (by \eqref{eqn:June25fgh}). This contradicts the definition of $b$, hence $\gamma_b > \beta_b$. This, combined with the definition of left swaps, implies \eqref{eqn:June25zzz}.

\noindent
\emph{Case 1:} ($b \in \seg{m}{1}(\beta)$) By the definition of $\seg{m}{1}(\beta)$ (and a simple induction), $\beta_b \geq \beta_s$ for all $b<s$. This contradicts \eqref{eqn:June25zzz}. That is, this case cannot actually occur.

\noindent
\emph{Case 2:} ($b \in \seg{m}{2}(\beta)$) By definition of $\seg{m}{}$, $r\not\in \seg{m}{}$. Hence 
$r\geq i_m(\beta)\in \seg{m+1}{}$; this is (A).

\noindent
\emph{Case 3:} ($b \in \seg{m}{3}(\beta)$)
By definition, $r\not\in \seg{m+1}{2}(\beta)$. If $r\in \seg{m+1}{3}(\beta)$
then
\[(\beta_{i_{m}(\beta)-1}, \beta_{i_{m+1}(\beta)-1},\beta_{i_{m+1}(\beta)})\simeq (0,1,2),\] 
 a contradiction.
Thus, only (B) and (C) as possible, as desired.
 \end{proof}

\begin{lemma}
\label{lemma:seg1big}
If $\alpha\in {\overline {\sf KM}}_n^{\geq 1}$, $i\in \seg{m}{1}$ ($1<m\leq k+1$), and
$\alpha_{i_{m-1}-1}<\alpha_i$ then $\alpha_i\geq \alpha_j$
for all $i\leq j$.
\end{lemma}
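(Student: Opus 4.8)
The plan is a short argument by contradiction driven entirely by $(0,1,2)$-avoidance. Suppose, toward a contradiction, that there is an index $j$ with $i \leq j$ and $\alpha_j > \alpha_i$; necessarily $j > i$. Since $1 < m \leq k+1$, the index $i_{m-1}$ is one of the ascent indices $i_1 < \cdots < i_k$, so $i_{m-1} \geq 2$ and the position $i_{m-1}-1$ is a legitimate index of $\alpha$; moreover $1 \leq i_{m-1}-1 < i_{m-1} \leq i < j$, the middle inequalities holding because $i \in \seg{m}{} = \{i_{m-1}, i_{m-1}+1, \ldots, i_m-1\}$.

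Next I would check that the subsequence $(\alpha_{i_{m-1}-1},\, \alpha_i,\, \alpha_j)$ forms a $(0,1,2)$ pattern. The order relations hold: $\alpha_{i_{m-1}-1} < \alpha_i$ by hypothesis, $\alpha_i < \alpha_j$ by the assumption, and hence $\alpha_{i_{m-1}-1} < \alpha_j$, matching the comparisons among $0,1,2$. For the gap conditions, $\alpha_i - \alpha_{i_{m-1}-1} \geq 1 = |0-1|$ and $\alpha_j - \alpha_i \geq 1 = |1-2|$ (strict inequalities of integers), while $\alpha_j - \alpha_{i_{m-1}-1} = (\alpha_j-\alpha_i)+(\alpha_i-\alpha_{i_{m-1}-1}) \geq 2 = |0-2|$. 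Thus $\alpha$ contains $(0,1,2)$, contradicting $\alpha \in {\overline {\sf KM}}_n^{\geq 1}$. Hence no such $j$ exists, i.e.\ $\alpha_i \geq \alpha_j$ for all $i \leq j$.

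There is no real obstacle here. The only points needing a moment's care are that $i_{m-1}-1$ is a valid position (this is exactly where the hypothesis $m > 1$ is used) and that the outer gap telescopes to $\geq 2$ (this is what forces us to invoke the full $(0,1,2)$ pattern rather than a weaker relation). Note also that for $j$ lying in the same segment $\seg{m}{}$ the conclusion is immediate from the weak decrease of $\alpha$ along a segment, so the genuine content is for $j \geq i_m$; but the displayed pattern argument handles all $j > i$ uniformly, so there is no need to case-split. The hypothesis $i \in \seg{m}{1}$ is not needed beyond placing $i$ in $\seg{m}{}$.
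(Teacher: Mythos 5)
Your proof is correct and takes exactly the same route as the paper: assume a violating $j>i$ and read off the pattern $(\alpha_{i_{m-1}-1},\alpha_i,\alpha_j)\simeq(0,1,2)$, contradicting $\alpha\in\overline{\sf KM}_n^{\geq 1}$. The paper's version is a one-liner; yours just spells out the gap conditions in the pattern definition, and your observation that the hypothesis $i\in\seg{m}{1}$ is used only through $i\in\seg{m}{}$ is accurate (indeed Lemma~\ref{lemma:segPartsRange}(e) shows the two are equivalent under $\alpha_{i_{m-1}-1}<\alpha_i$).
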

\begin{proof}
Say $i<j$ but $\alpha_i<\alpha_j$. Then $(\alpha_{i_{m-1}-1},\alpha_i,\alpha_j)\simeq (0,1,2)$,
contradicting $\alpha\in {\overline {\sf KM}}_n^{\geq 1}$.
\end{proof}

\begin{proposition}
\label{theorem:dominanceIntervalDisjoint}
Let $\alpha\in {\overline {\sf KM}}_n^{\geq 1}$.
If $\gamma,\tau \in {\sf lswap}(\alpha)$ and $\gamma >_{lex} \tau$ there exists $z\in [1,n]$ such that 
\begin{equation}
\label{eqn:June29thegoal}
\tau_1+\cdots+\tau_z+\flex{z}{\tau} < \gamma_1+\cdots+\gamma_z.
\end{equation}
\end{proposition}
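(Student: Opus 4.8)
The plan is to reduce to the composition $\beta$ supplied by Lemma~\ref{lemma:lswapSeq}, and then carry out a careful comparison of the two sides of \eqref{eqn:June29thegoal} at $z=b$, where $b=\min\{i:\gamma_i>\tau_i\}$. First I would apply Lemma~\ref{lemma:lswapSeq}: since $\gamma_s=\tau_s=\beta_s$ for $1\le s<b$, and since parts $1,\dots,b-1$ are untouched by the swap sequences (part (III)), the quantities $\tau_1+\cdots+\tau_{b-1}=\gamma_1+\cdots+\gamma_{b-1}$ agree, so \eqref{eqn:June29thegoal} at $z=b$ is equivalent to
\[\tau_b+\flex{b}{\tau} < \gamma_b.\]
I also note $\flat$-type facts: $b\in\seg{m}{}(\beta)$ for some $m$, and by Lemma~\ref{lemma:minLswap} there is a minimum $r>b$ with $\gamma_b=\beta_r>\beta_b$, falling into one of the three cases (A), (B), (C). Since $\tau\in{\sf lswap}(\beta)$ and the swaps from $\beta$ to $\tau$ never touch positions $<b$ (Lemma~\ref{lemma:lswapSeq}(III) applies to $\tau$ as well), $\tau_b$ is obtained from $\beta$ by moving into position $b$ some value $\beta_{r'}$ with $r'>b$, or $\tau_b=\beta_b$; in all cases $\beta_b\le\tau_b$. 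Crucially $\tau_b<\gamma_b$ by the very definition of $b$, so $\tau_b\le\gamma_b-1=\beta_r-1$.

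Next I would bound $\flex{b}{\tau}$. The point is that $\flex{b}{\cdot}$ depends only on the shape of the relevant segment in $\tau$, and the hypotheses—$\alpha$ (hence $\beta$, hence $\tau$) avoids all of ${\sf KM}$—constrain how large the gap $\alpha_{\rmax{b}{\tau}}-\alpha_{\rmin{b}{\tau}}$ can be. Concretely, I expect to show $\flex{b}{\tau}=0$ in the main cases: since the swap sequence to $\tau$ does not involve positions $1,\dots,b-1$, the value $\rmin{b}{\tau}$ (which is either $b$ or the last index of the previous segment) has $\tau_{\rmin{b}{\tau}}\ge\tau_b$, while $\rmax{b}{\tau}$ picks out $\max_{b<s\le i_m(\tau)}\tau_s$; using the $(0,1,2)$- and $(0,0,2,1)$-avoidance of $\tau$ one forces either $\tau_{\rmax{b}{\tau}}\le\tau_{\rmin{b}{\tau}}$ (so $\flex{b}{\tau}=0$) or the very tight equality situation $\tau_{\rmax{b}{\tau}}=\tau_{\rmin{b}{\tau}}+1$ coming from Lemma~\ref{lemma:1032avoidingConsequence}, giving $\flex{b}{\tau}=0$ again. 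In the residual case where $\flex{b}{\tau}>0$ is genuinely possible, I would instead pick $z$ slightly different from $b$—absorbing the ``flexibility'' by going one segment further to the right, where $\gamma$'s prefix sum has already jumped by the full displacement $\beta_r-\beta_b\ge 1$ but $\tau$'s has not yet caught up; the three cases (A), (B), (C) of Lemma~\ref{lemma:minLswap} dictate exactly which $z\in\{b, i_m(\beta), \dots\}$ to use, matched against the definitions \eqref{equation:rmin}–\eqref{equation:flex} of ${\sf rmin}, {\sf rmax}, {\sf flex}$.

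Finally, assembling: with $z=b$ (or the adjusted $z$), the left side is $\tau_1+\cdots+\tau_z+\flex{z}{\tau}\le(\gamma_1+\cdots+\gamma_{b-1})+(\beta_r-1)+0 < \gamma_1+\cdots+\gamma_{b-1}+\beta_r = \gamma_1+\cdots+\gamma_z$, where the last equality uses $\gamma_b=\beta_r$ and, for the adjusted cases, that the intervening parts of $\gamma$ sum to at least the corresponding parts of $\beta$ (Lemma~\ref{lemma:dominanceInterval}(I) applied to the relevant sub-composition, or directly the characterization of Bruhat/dominance order used in Lemma~\ref{lemma:lswapSeq}). The main obstacle I anticipate is precisely the bookkeeping in the ``flex $>0$'' situation: one must verify that shifting $z$ rightward does not inadvertently make $\flex{z}{\tau}$ large again, and that the displacement in $\gamma$'s prefix sum is at least the new ${\sf flex}$ value plus one. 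This requires carefully chasing which segment of $\tau$ the new $z$ lands in, and invoking the four-element pattern avoidance ($\{(0,0,2,2),(0,0,2,1),(1,0,3,2),(1,0,2,2)\}$) to cap the segment heights—essentially the same style of argument as in Lemma~\ref{lemma:uniqueSmaller} and Lemma~\ref{lemma:dominanceInterval}(II), reused here for $\tau$ rather than for a quasi-key tableau.
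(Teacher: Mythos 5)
Your skeleton is the same as the paper's: reduce to $\beta$ via Lemma~\ref{lemma:lswapSeq}, invoke Lemma~\ref{lemma:minLswap} to get $r$ and cases (A), (B), (C), and pick a case-dependent $z$ that starts at $b$ and moves rightward as needed. But the way you propose to close the argument has a genuine gap that is in fact the main content of the paper's proof. You assert that $\flex{b}{\tau}=0$ ``in the main cases'' and, for the adjusted $z$, that the flex contribution is $0$ as well. Neither is true in general. In the paper's Case~(A).2.1 (where $r=i_m(\beta)$) the flex equals $\tau_{\rmax{b}{\tau}}-\tau_{\rmin{b}{\tau}}-1 \le \beta_{i_m(\beta)}-\beta_b-1$, which is typically strictly positive; the inequality \eqref{eqn:June29thegoal} nevertheless holds only because $\gamma_b=\beta_{i_m(\beta)}$ exceeds $\tau_b=\beta_b$ by exactly that same margin plus one. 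Likewise in Cases~(B).2 and (C).2 the flex at the adjusted $z$ (which the paper takes to be $z=r-1$ in (B) and the specific index of \eqref{eqn:June28ooo} in (C)) is nonzero, and the argument is a delicate cancellation using $\tau_{\rmin{z}{\tau}}=\tau_b$ and an upper bound on $\tau_{\rmax{z}{\tau}}$ by $\beta_{i_m(\beta)}$ or $\beta_{i_{m+1}(\beta)}$, not a vanishing of $\flex{z}{\tau}$.

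Your final displayed chain
\[
\tau_1+\cdots+\tau_z+\flex{z}{\tau}\ \le\ (\gamma_1+\cdots+\gamma_{b-1})+(\beta_r-1)+0\ <\ \gamma_1+\cdots+\gamma_{b-1}+\beta_r\ =\ \gamma_1+\cdots+\gamma_z
\]
is also not correct once $z>b$: the middle expression omits $\tau_{b+1}+\cdots+\tau_z$ (which are positive, as $\tau\in{\sf Comp}_n^{\geq 1}$), and the claimed equality on the right omits $\gamma_{b+1}+\cdots+\gamma_z$. In the paper these intermediate terms are shown to coincide, $\gamma_s=\beta_s=\tau_s$ for $b<s\le z$ (eqns.~\eqref{eqn:June28yyy}, \eqref{eqn:June28xxx}, and the corresponding (B) versions), and the flex term is then absorbed telescopically via $\tau_{\rmin{z}{\tau}}=\tau_b$; you cannot bypass this by treating them as a lower bound via ``dominance on a sub-composition.'' You candidly flag that ``the main obstacle I anticipate is precisely the bookkeeping in the flex~$>0$ situation'' --- that anticipation is correct, and that bookkeeping (identifying the right $z$, establishing $\beta_s=\tau_s=\gamma_s$ on the stretch $b<s\le z$, pinning down $\rmin{z}{\tau}$ and $\rmax{z}{\tau}$, and splitting on $r=i_{m}(\beta)$ or $i_{m+1}(\beta)$ vs.\ $r$ larger with Lemma~\ref{lemma:1032avoidingConsequence}) is the bulk of the proof, not a residual case.
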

\begin{proof}
Our analysis is based on the cases (A), (B), (C) from Lemma \ref{lemma:minLswap}, as well as the notation from that lemma.
By the (proof of) Proposition~\ref{prop:exhaustiveQKMultFreeCases}, 
\begin{equation}
\label{eqn:June29yyy}
\beta\in {\overline {\sf KM}}_n^{\geq 1}
\end{equation}

\noindent 
\emph{Case (A):} ($b \in \seg{m}{2}(\beta)$ and $r \geq i_m(\beta)$) Let $t > b$ such that $\beta_t > \beta_b$. By the definition of $\seg{m}{}$, $t \geq i_m(\beta)$. If $t > i_m(\beta)$, then by (\ref{eqn:June29yyy}) we can
apply Lemma \ref{lemma:1032avoidingConsequence} to $\beta$, $m$, $r = t$, and $s = b$. The 
lemma concludes that $\beta_{i_m(\beta)}=\beta_t$. Otherwise, $t=i_m(\beta)$, and $\beta_t = \beta_{i_m(\beta)}$. Thus,
\begin{equation}
\label{eqn:June29bbb}
\beta_t = \beta_{i_m(\beta)} \text{ \ for all $t>b$ such that $\beta_t>\beta_b$.}
\end{equation}
By Lemma~\ref{lemma:seg1big},
\begin{equation}
\label{eqn:June29ccc}
\tau_b < \gamma_b = \beta_{r} \leq \beta_{i_m(\beta)}.
\end{equation}
Consider a sequence of left swaps transforming $\beta$ to $\tau$. None of these
left swaps involve the index $b$: Otherwise, by Lemma~\ref{lemma:lswapSeq}(III),
$b$ is the left index of such a swap, and some $t>b$ such that $\beta_t>\beta_b$ is the right index. 
This contradicts (\ref{eqn:June29bbb}) and (\ref{eqn:June29ccc}) combined. Thus
\begin{equation}
\label{eqn:June26ttt}
\tau_b = \beta_b.
\end{equation}

By Lemma~\ref{lemma:seg1big}, $\beta_{i_m(\beta)} \geq \beta_v$ for all $v > b$. 
That is $\max\{\beta_v:v>b\}\leq \beta_{i_m(\beta)}$. However, notice that 
$\{\beta_v:v>b\}=\{\tau_v:v>b\}$ since $\{\beta_v:v\leq b\}=\{\tau_v:v\leq b\}$. Therefore,
\begin{equation}
\label{June27rmax1}
\tau_{\rmax{b}{\tau}} \leq \beta_{i_m(\beta)}. 
\end{equation}

The assumption $b \in \seg{m}{2}(\beta)$ means $\beta_b < \beta_{i_{m-1}-1}$. Then the definition of $\beta$ and \eqref{eqn:June26ttt} imply $\tau_b < \tau_{i_{m-1}-1}$ (since $b>i_{m-1}-1\in \seg{m-1}{3}$). Hence, by \eqref{equation:rmin}, $\rmin{b}{\tau} = b$ and
\begin{equation}
\label{June27rmin1}
\tau_{\rmin{b}{\tau}} = \tau_b.
\end{equation}

\noindent
\emph{Case (A).1:} ($\tau_{\rmin{b}{\tau}} \geq \tau_{\rmax{b}{\tau}}$) By \eqref{equation:flex}, $\flex{b}{\tau} = 0$. Then the definition of $b$ implies 
\[\tau_1 + \cdots + \tau_b + \flex{b}{\tau} = \tau_1 + \cdots + \tau_b < \gamma_1 + \cdots + \gamma_b,\]
establishing (\ref{eqn:June29thegoal}).

\noindent
\emph{Case (A).2:} ($\tau_{\rmin{b}{\tau}} < \tau_{\rmax{b}{\tau}}$) Thus,
$\flex{b}{\tau} = \tau_{\rmax{b}{\tau}} - \tau_{\rmin{b}{\tau}} - 1$, and
\begin{equation}
\label{eqn:June26bigeqn}
\begin{array}{ll}
\tau_1 + \cdots + \tau_b + \flex{b}{\tau} & = \beta_1 + \cdots + \beta_b + \tau_{\rmax{b}{\tau}} - \tau_{\rmin{b}{\tau}} - 1 \\
&  \leq \beta_1 + \cdots + \beta_b + \beta_{i_m(\beta)} - \beta_b - 1 \\
& = \beta_1 + \cdots + \beta_{b-1} + \beta_{i_m(\beta)} - 1,
\end{array}
\end{equation}
where the first equality follows from \eqref{eqn:June26ttt}, and the inequality is by \eqref{June27rmax1} and \eqref{June27rmin1}. 

We have two subcases:

\noindent
\emph{Case (A).2.1} ($r=i_m(\beta)$) Then \eqref{eqn:June25zzz} and \eqref{eqn:June26bigeqn} imply \[\tau_1 + \cdots + \tau_b + \flex{b}{\tau} < \beta_1 + \cdots + \beta_{b-1} + \beta_{i_m(\beta)} = \gamma_1 + \cdots + \gamma_b,\] 
which proves (\ref{eqn:June29thegoal}).

\noindent
\emph{Case (A).2.2} ($r > i_m(\beta)$) By (\ref{eqn:June25zzz}),
$\beta_b<\beta_r$. Combining this with (\ref{eqn:June29yyy}), we may apply Lemma \ref{lemma:1032avoidingConsequence} to $\beta$ with $r,m$ being as above, and $s=b$, to conclude that $\beta_b=\beta_{i_m(\beta)-1}$ and $\beta_{i_m(\beta)} = \beta_r = \beta_b + 1$. Applying this to \eqref{eqn:June26bigeqn} yields
\begin{align*}
\tau_1 + \cdots + \tau_b + \flex{b}{\tau} & \leq \beta_1 + \cdots + \beta_{b-1} + \beta_{i_m(\beta)} - 1 \\
&  = \beta_1 + \cdots + \beta_{b-1} + (\beta_b + 1) - 1 \\
& = \beta_1 + \cdots + \beta_{b} \\
& < \gamma_1 + \cdots + \gamma_{b}, 
\end{align*}
where the final inequality follows from \eqref{eqn:June25zzz} and the definition of $\beta$; this again proves
(\ref{eqn:June29thegoal}).

\noindent
\emph{Case (B):} ($b\in\seg{m}{3}(\beta)$ (\emph{i.e.,} $b=i_m(\beta)-1$) and $r \in \seg{m+1}{1}(\beta)$) If $b < s \leq r$, then $s\in \seg{m+1}{1}(\beta)$ and thus $\beta_s \geq \beta_r$. For such an $s$,
if a left swap involving indices $b$ and $s$ occurred in the transformation from $\beta$ to $\tau$, then it follows from Lemma~\ref{lemma:lswapSeq}(III) that
\begin{equation}
\label{eqn:June29copy1}
\tau_b \geq \beta_s \geq \beta_r = \gamma_b.
\end{equation} 
This contradicts $b$'s definition. So such a left swap cannot exist. This, with Lemma~\ref{lemma:lswapSeq}(III)
means $s$ is not the right index of a left swap in the $\beta$ to $\tau$ transformation. Notice 
\[\beta_{i_{m}(\beta)-1}=\beta_b<\beta_r\leq \beta_s,\] 
where the middle inequality is by (\ref{eqn:June25zzz}). Thus, since $s\in\seg{m+1}{1}(\beta)$, by applying Lemma~\ref{lemma:seg1big} to $\beta$ with $i=s$ we conclude that
\begin{equation}
\label{eqn:June30mnb1}
\beta_s \geq \beta_t \text{ for all $t > s$.}
\end{equation}
Hence $s$ cannot be the leftmost index of a left swap that transforms
$\beta$ to $\tau$. 

All of the above analysis, together with the definition of $\beta$, shows that
\begin{equation}
\label{eqn:June26yyy}
\beta_s = \tau_s \text{ for $1 \leq s \leq r$ with $s \neq b$.}
\end{equation}

A similar argument proves
\begin{equation}
\label{eqn:June26xxx}
\beta_s = \gamma_s \text{ for $1 \leq s < r$ with $s \neq b$.}
\end{equation}
More precisely, if $b<s<r$ then $s\in \seg{m+1}{1}(\beta)$ and $\beta_s>\beta_r$ (by the minimality of $r$). From
this, we get a variation of (\ref{eqn:June29copy1}) which says $\gamma_b \geq \beta_s > \beta_r = \gamma_b$ (a contradiction). The remainder of the argument is the same.

Further, we claim:
\begin{equation}
\label{eqn:June27yyy}
\beta_b \leq \tau_b < \gamma_b=\beta_r\leq \beta_s= \tau_s \text{ for $b < s \leq r$.}
\end{equation}
The first inequality is by Lemma~\ref{lemma:lswapSeq}(III). The next inequality is from the definition of $b$.
The equality thereafter is (\ref{eqn:June25zzz}). The next inequality is since $s\leq r$ and $s,r\in \seg{m+1}{1}(\beta)$.
The remaining equality is (\ref{eqn:June26yyy}).

Now, \eqref{eqn:June27yyy} says, in particular, that $\tau_{b}<\tau_{b+1}$. Since $\tau\in {\overline {\sf KM}}_n^{\geq 1}$
(by the proof of Proposition~\ref{prop:exhaustiveQKMultFreeCases}), it avoids $(0,1,2)$. Hence $\tau_{b-1}\geq \tau_b$.
This combined with \eqref{eqn:June26yyy} implies 
\begin{equation}
\label{eqn:June30i_m}
b \in \seg{m}{3}(\tau) \text{ \ and $i_m(\beta)=i_m(\tau)=b+1$.}
\end{equation} 

If $r = i_m(\beta)$, then $r-1 = b$ (by this case's assumption). Since $b \in \seg{m}{3}(\tau)$, then $\beta_{b} \leq \beta_{i_{m-1}(\beta)-1}$ (otherwise $(\beta_{i_{m-1}(\beta)-1}, \beta_{b},\beta_{b+1})\simeq (0,1,2)$). Hence 
$\rmin{r-1}{\tau} = \rmin{b}{\tau} = b$. Otherwise, if $r > i_m(\beta)$, then \eqref{eqn:June26yyy} and \eqref{eqn:June27yyy} imply $\rmin{r-1}{\tau} = b$. Hence, 
\begin{equation}
\label{eqn:June27dsa}
\tau_{\rmin{r-1}{\tau}} = \tau_{b}
\end{equation}

If $r = i_m(\beta)$, then $r-1 = b$ (by this case's assumption). By definition \eqref{equation:rmax} and \eqref{eqn:June30i_m}, $\rmax{b}{\tau}=i_m(\tau)=i_m(\beta)=b+1=r$. Otherwise, $r > i_m(\beta)$. Now, by \eqref{eqn:June30mnb1}, $\beta_r \geq \beta_t$ for all $t > r$.
This, combined with \eqref{eqn:June26yyy} and \eqref{eqn:June27yyy}, implies $\tau_r \geq \tau_t$ for all $t > r$. 
This implies, by definition \eqref{equation:rmax}, that $\rmax{r-1}{\tau}=r$. This, combined with \eqref{eqn:June26yyy}, implies 
\begin{equation}
\label{eqn:June27ewq}
\tau_{\rmax{r-1}{\tau}} = \tau_{r} = \beta_r
\end{equation}

\noindent
\emph{Case (B).1:} ($\tau_{\rmin{r-1}{\tau}} \geq \tau_{\rmax{r-1}{\tau}}$) By \eqref{equation:flex}, $\flex{r-1}{\tau} = 0$. Then \eqref{eqn:June26yyy}, \eqref{eqn:June26xxx}, and the definition of $b$, imply 
\[\tau_1 + \cdots + \tau_{r-1} + \flex{r-1}{\tau} = \tau_1 + \cdots + \tau_{r-1} < \gamma_1 + \cdots + \gamma_{r-1}.\]

\noindent
\emph{Case (B).2:} ($\tau_{\rmin{r-1}{\tau}} < \tau_{\rmax{r-1}{\tau}}$) Here,
$\flex{r-1}{\tau} = \tau_{\rmax{r-1}{\tau}} - \tau_{\rmin{r-1}{\tau}} - 1$, and \eqref{eqn:June26yyy}, \eqref{eqn:June27dsa}, \eqref{eqn:June27ewq}, \eqref{eqn:June26xxx}, and \eqref{eqn:June25zzz} (in that order) give 
\vspace{-0.25in}\begin{center}
$\begin{array}{ll}
\tau_1 \!+\! \cdots \!+\! \tau_{r-1} \!+\! \flex{r-1}{\tau}\!\!\!\! & =  \beta_1 + \cdots + \beta_{b-1} + \tau_b + \beta_{b+1}  + \cdots + \beta_{r-1}  + \tau_{\rmax{b}{\tau}} \!-\! \tau_{\rmin{b}{\tau}} \!-\! 1 \\
& = \beta_1 + \cdots + \beta_{b-1} + \tau_b + \beta_{b+1}  + \cdots + \beta_{r-1} + \beta_r - \tau_b - 1 \\
& = \beta_1 + \cdots + \beta_{b-1}  + \beta_r + \beta_{b+1} + \cdots + \beta_{r-1} - 1 \\
& < \gamma_1 + \cdots + \gamma_{r-1}.
\end{array}$
\end{center}

\noindent
\emph{Case (C):} ($b\in \seg{m}{3}(\beta)$ (\emph{i.e.,} $b=i_m(\beta)-1$) and $r \geq i_{m+1}(\beta)$) Define
\begin{equation}
\label{eqn:June28ooo}
z = \max \{ v : v \in \seg{m+1}{}(\beta), \beta_v \geq \beta_{i_{m+1}(\beta)}, \text{ and } \beta_v > \beta_{i_{m}(\beta) - 1} \}.
\end{equation}
Note that $z\in [1,n]$ since the set is nonempty: it always contains $i_{m}(\beta)$. Clearly, $z < r$ since 
$r\geq i_{m+1}(\beta) \in \seg{m+2}{}$. Let $b < s \leq z < r$. Then $\beta_s \neq \beta_r$ by the minimality of $r$. By definition of $\seg{m+1}{}$ and of $z$, $\beta_s \geq \beta_z > \beta_{i_{m}(\beta) - 1}$. Thus if $\beta_s < \beta_r$, then $(\beta_{i_{m}(\beta) - 1}, \beta_s, \beta_r) \simeq (0,1,2)$. Hence $\beta_s > \beta_r$. For such an $s$,
if a left swap involving indices $b$ and $s$ occurred in the transformation from $\beta$ to $\tau$, then it follows from Lemma~\ref{lemma:lswapSeq}(III) that
\begin{equation}
\label{eqn:June29copy}
\tau_b \geq \beta_s > \beta_r = \gamma_b.
\end{equation} 
This contradicts $b$'s definition. Thus such a left swap cannot exist. This, with Lemma~\ref{lemma:lswapSeq}(III)
means $s$ is not the right index of any left swap in the transformation from
$\beta$ to $\tau$. Notice 
\[\beta_{i_{m}(\beta)-1}=\beta_b<\beta_r < \beta_s,\] 
where the middle inequality is by (\ref{eqn:June25zzz}). Thus, since $\beta_s\geq \beta_z>\beta_{i_m(\beta)-1}$ and
$s\in\seg{m+1}{}(\beta)$, we may apply Lemma~\ref{lemma:seg1big} to $\beta$ with $i=s$ to conclude that
\begin{equation}
\label{eqn:June30mnb}
\beta_s \geq \beta_t \text{ for all $t > s$.}
\end{equation}
Hence $s$ cannot be the leftmost index of a left swap that transforms
$\beta$ to $\tau$. 

All of the above analysis, together with the definition of $\beta$, shows that
\begin{equation}
\label{eqn:June28yyy}
\beta_s = \tau_s \text{ for $1 \leq s \leq z$ with $s \neq b$.}
\end{equation}

The same argument, replacing $\tau$ with $\gamma$ throughout proves
\begin{equation}
\label{eqn:June28xxx}
\beta_s = \gamma_s \text{ for $1 \leq s \leq z$ with $s \neq b$.}
\end{equation}

Further, we have the following inequality
\begin{equation}
\label{eqn:June28sas}
\beta_b \leq \tau_b < \gamma_b= \beta_r < \beta_s = \tau_s \text{ for $b < s \leq z < r$.}
\end{equation}
The first inequality is by Lemma~\ref{lemma:lswapSeq}(III). The next inequality is from the definition of $b$.
The equality thereafter is (\ref{eqn:June25zzz}). The next inequality is by the minimality of $r$.
The remaining equality is (\ref{eqn:June28yyy}).

Then \eqref{eqn:June28sas} implies $\tau_{i_m(\beta)-1}=\tau_{b} < \tau_z$. This, with \eqref{eqn:June28yyy} and \eqref{eqn:June28sas}, implies $\rmin{z}{\tau} = b$. Thus 
\begin{equation}
\label{eqn:June28rmin1}
\tau_{\rmin{z}{\tau}} = \tau_{b}.
\end{equation}

Let $z < t < i_{m+1}(\beta)$. First suppose $\beta_{i_{m+1}(\beta)} \leq \beta_t$. Then the maximality of $z$ implies $\beta_t \leq \beta_{i_{m}(\beta)-1}$. This implies $\beta_r \leq \beta_{i_{m+1}(\beta)} \leq \beta_t \leq \beta_{i_{m}(\beta)-1} = \beta_{b}$ (where the first inequality follows from Lemma~\ref{lemma:seg1big} applied to $i_{m+1}(\beta)$). This inequality contradicts \eqref{eqn:June25zzz}. Hence 
\begin{equation}
\label{eqn:June30qwerty}
\beta_{i_{m+1}(\beta)} > \beta_t.
\end{equation} 
Lemma~\ref{lemma:seg1big} applied to $i_{m+1}(\beta)$, implies $\beta_{i_{m+1}(\beta)} \geq \beta_v$ for all $v \geq i_{m+1}(\beta)$. Combining this with (\ref{eqn:June30qwerty}) shows that in fact 
\begin{equation}
\label{eqn:June30Dvorak}
\beta_{i_{m+1}(\beta)} \geq \beta_v, \text{\ for all $v >z$}.
\end{equation}

Now, by \eqref{eqn:June28yyy} and (\ref{eqn:June30Dvorak}), $\beta_{i_{m+1}(\beta)} \geq \tau_v, \text{\ for all $v >z$}$.
Since $\rmax{z}{\tau}>z$ (by definition),
\begin{equation}
\label{eqn:June28rmax1}
\tau_{\rmax{z}{\tau}} \leq \beta_{i_{m+1}(\beta)}.
\end{equation}

\noindent
\emph{Case (C).1:}  ($\tau_{\rmin{z}{\tau}} \geq \tau_{\rmax{z}{\tau}}$) By \eqref{equation:flex}, $\flex{z}{\tau} \!=\! 0$. Then \eqref{eqn:June28yyy}, \eqref{eqn:June28xxx}, and $b$'s definition imply 
\[\tau_1 + \cdots + \tau_{z} + \flex{z}{\tau} = \tau_1 + \cdots + \tau_{z} < \gamma_1 + \cdots + \gamma_{z}.\]

\noindent
\emph{Case (C).2:} ($\tau_{\rmin{z}{\tau}} < \tau_{\rmax{z}{\tau}}$)
Now, $\flex{z}{\tau} = \tau_{\rmax{z}{\tau}} - \tau_{\rmin{z}{\tau}} - 1$, and \eqref{eqn:June28yyy}, \eqref{eqn:June28rmin1}, \eqref{eqn:June28rmax1} shows
\begin{equation}\label{eqn:June28bigeqn2}
\begin{array}{ll}
\tau_1 \!+\! \cdots \!+\! \tau_{z} \!+\! \flex{z}{\tau}\!\!\!\!\! & = \beta_1 + \cdots + \beta_{b-1} + \tau_b + \beta_{b+1}  + \cdots + \beta_{z} + \tau_{\rmax{z}{\tau}} \!-\! \tau_{\rmin{z}{\tau}} - 1 \\
& \leq \beta_1 + \cdots + \beta_{b-1} + \tau_b + \beta_{b+1}  + \cdots + \beta_{z} + \beta_{i_{m+1}(\beta)} - \tau_b - 1 \\
& = \beta_1 + \cdots + \beta_{b-1}  + \beta_{i_{m+1}(\beta)} + \beta_{b+1} + \cdots + \beta_{z} - 1 \\
\end{array}
\end{equation}
We have two subcases:

\noindent
\emph{Case (C).2.1:} ($r=i_{m+1}(\beta)$) Then \eqref{eqn:June25zzz}, \eqref{eqn:June28xxx} and \eqref{eqn:June28bigeqn2} give \[\tau_1 + \cdots + \tau_z + \flex{z}{\tau} < \beta_1 + \cdots + \beta_{b-1}  + \beta_{i_{m+1}(\beta)} + \beta_{b+1} + \cdots + \beta_{z} = \gamma_1 + \cdots + \gamma_z.\] 

\noindent
\emph{Case (C).2.2:} ($r > i_{m+1}(\beta)$)   By (\ref{eqn:June29yyy}), we may apply Lemma \ref{lemma:1032avoidingConsequence} to $\beta$ with $r,m+1$ being as above, and $s=b$.
Since (\ref{eqn:June25zzz}) says $\beta_b<\beta_r$, said lemma shows
 $\beta_{i_{m+1}(\beta)} = \beta_r = \beta_b + 1$. Applying this to \eqref{eqn:June28bigeqn2} yields
\begin{align*}
\tau_1 + \cdots + \tau_z + \flex{z}{\tau} & \leq \beta_1 + \cdots + \beta_{b-1}  + \beta_{i_{m+1}(\beta)} + \beta_{b+1} + \cdots + \beta_{z} - 1 \\ 
&  = \beta_1 + \cdots + \beta_z \\ 
& < \gamma_1 + \cdots + \gamma_{z}, 
\end{align*}
where the final inequality follows from \eqref{eqn:June25zzz} and \eqref{eqn:June28xxx}.
\end{proof}

\begin{corollary}
\label{corollary:quasikeyDisjoint}
Let $\gamma,\tau \in {\sf lswap}(\alpha)$ with $\gamma >_{lex} \tau$. If $T \in {\sf qKT}(\gamma)$, $S \in {\sf qKT}(\tau)$, then ${\sf wt}(T) \neq {\sf wt}(S)$.
\end{corollary}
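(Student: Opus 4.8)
The plan is to deduce Corollary~\ref{corollary:quasikeyDisjoint} by combining Lemma~\ref{lemma:dominanceInterval} with Proposition~\ref{theorem:dominanceIntervalDisjoint}. First I would dispose of a normalization: since $\gamma,\tau\in{\sf lswap}(\alpha)$ and $\alpha\in{\overline{\sf KM}}_n^{\geq 1}$, the proof of Proposition~\ref{prop:exhaustiveQKMultFreeCases} gives $\gamma,\tau\in{\overline{\sf KM}}_n^{\geq 1}$, so Lemma~\ref{lemma:dominanceInterval} applies to both $\gamma$ and $\tau$. Now suppose for contradiction that $T\in{\sf qKT}(\gamma)$, $S\in{\sf qKT}(\tau)$ have $\mu:={\sf wt}(T)={\sf wt}(S)$.

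The key step is to pick the index $z\in[1,n]$ supplied by Proposition~\ref{theorem:dominanceIntervalDisjoint} applied to the pair $\gamma>_{lex}\tau$, so that
\[
\tau_1+\cdots+\tau_z+\flex{z}{\tau}<\gamma_1+\cdots+\gamma_z.
\]
On one hand, Lemma~\ref{lemma:dominanceInterval}(II) applied to $S\in{\sf qKT}(\tau)$ gives $\mu_1+\cdots+\mu_z\leq\tau_1+\cdots+\tau_z+\flex{z}{\tau}$. On the other hand, Lemma~\ref{lemma:dominanceInterval}(I) applied to $T\in{\sf qKT}(\gamma)$ gives $\gamma_1+\cdots+\gamma_z\leq\mu_1+\cdots+\mu_z$. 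Chaining these three inequalities yields
\[
\gamma_1+\cdots+\gamma_z\leq\mu_1+\cdots+\mu_z\leq\tau_1+\cdots+\tau_z+\flex{z}{\tau}<\gamma_1+\cdots+\gamma_z,
\]
a contradiction. Hence no such $T,S$ exist and ${\sf wt}(T)\neq{\sf wt}(S)$, as claimed.

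I expect the entire argument to be short and essentially mechanical once the ingredients are in place; there is no serious obstacle. The only point requiring a moment's care is the direction in which $\flex{z}{\tau}$ appears: it bounds the partial sums of ${\sf wt}(S)$ from \emph{above} (for the tableau of shape $\tau$), while for the tableau of shape $\gamma$ we need only the crude lower bound $\alpha\leq_{\sf Dom}\mu$ from part (I), with $\gamma$ in place of $\alpha$. One must also make sure that Proposition~\ref{theorem:dominanceIntervalDisjoint} is invoked with $\gamma,\tau$ in the correct (lexicographic) order, which matches its hypothesis $\gamma>_{lex}\tau$ verbatim. With those bookkeeping matters handled, the contradiction is immediate.
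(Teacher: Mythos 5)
Your proof is correct and is essentially the same as the paper's: both chain Lemma~\ref{lemma:dominanceInterval}(I) and (II) against the strict inequality from Proposition~\ref{theorem:dominanceIntervalDisjoint} to reach a contradiction. Your preliminary remark that $\gamma,\tau\in{\overline{\sf KM}}_n^{\geq 1}$ (via the proof of Proposition~\ref{prop:exhaustiveQKMultFreeCases}) is a sensible bit of extra care that the paper leaves implicit.
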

\begin{proof}
Suppose not, and $\mu = {\sf wt}(T) = {\sf wt}(S)$. Then the two parts of 
Lemma \ref{lemma:dominanceInterval} give 
$$\gamma_1+\cdots+\gamma_b \leq \mu_1+\cdots+\mu_b \leq \tau_1+\cdots+\tau_b+\flex{b}{\tau}$$ for all $b\in [1,n]$. This contradicts Proposition \ref{theorem:dominanceIntervalDisjoint}.
\end{proof}

\noindent
\emph{Conclusion of the proof of sufficiency}:
Define $\alpha'\in {\sf Comp}_n$ by $\alpha'_i = \alpha_i + 1$. Since $\alpha\in 
 {\overline {\sf KM}}_n$ then $\alpha'\in 
 {\overline {\sf KM}}_n^{\geq 1}$. Observe, 
$\kappa_{\alpha'}=x_1 \cdots x_n \cdot \kappa_{\alpha}$. Therefore
 $\kappa_{\alpha}$ is multiplicity-free if and only if $\kappa_{\alpha'}$ is multiplicity-free.  Now, $\kappa_{\alpha'}$ is the sum of $\mathfrak{D}_\beta$ for $\beta \in {\sf Qlswap}(\alpha')={\sf lswap}(\alpha')$.  
Each of these ${\mathfrak D}_{\beta}$'s are multiplicity-free by Theorem~\ref{thm:qksummary}.
Their sum is multiplicity-free by Corollary \ref{corollary:quasikeyDisjoint}. Hence $\kappa_{\alpha}$ is multiplicity-free.\qed

\section*{Acknowledgements}
We thank Mahir Can for helpful communications. We thank David Brewster 
and Husnain Raza for writing computer code (as part of their
NSF RTG funded ICLUE program) that was useful for checking parts of the proof.
We used the Maple packages ACE and Coxeter/Weyl in our investigations.
AY was partially supported by a Simons Collaboration Grant, and an NSF RTG grant.
RH was partially supported by an AMS-Simons Travel Grant.


\begin{thebibliography}{999999}

\bibitem[AS18]{Assaf.Searles} S.~Assaf and D.~Searles, \emph{Kohnert tableaux and a lifting of quasi-Schur functions.} J. Combin. Theory Ser. A 156 (2018), 85--118.


\bibitem[FMS19]{FMSD} A.~{Fink}, K.~{M{\'e}sz{\'a}ros}, and A.~{St. Dizier}.
\newblock {\emph{Zero-one Schubert polynomials,}}
Math. Z., 2020. 

\bibitem[HY20]{Hodges.Yong} R.~Hodges and A.~Yong, \emph{Coxeter combinatorics and spherical Schubert
geometry}, preprint, 2020.

\bibitem[I03]{Ion} B.~Ion, \emph{Nonsymmetric Macdonald polynomials and Demazure characters}, Duke Math. J. 116 (2003), no. 2, 299--318.


\bibitem[K90]{Kohnert} A.~Kohnert, \emph{Weintrauben, Polynome, Tableaux},
Bayreuth Math. Schrift. {\bf 38}(1990), 1--97.


\bibitem[L13]{Lascoux:polynomials} A.~Lascoux, Polynomials, 2013. \url{http://www-igm.univ-mlv.fr/~al/ARTICLES/CoursYGKM.pdf}



\bibitem[M01]{Manivel}
 L. Manivel, \emph{Symmetric functions, Schubert polynomials and degeneracy loci}. Translated from the
1998 French original by John R. Swallow. SMF/AMS Texts and Monographs, American Mathematical
Society, Providence, 2001.

\bibitem[M09]{Mason}
S.~Mason, \emph{An explicit construction of type A Demazure atoms}, J. Algebraic Combin. 29 (2009), no. 3, 295--313. 



\bibitem[RS95]{Reiner.Shimozono} V.~Reiner and M.~Shimozono, V. Reiner and M. Shimozono,
\emph{Key polynomials and a flagged Littlewood-Richardson rule},
J.~Comb.~Theory.~Ser.~A.,
{\bf 70}(1995), 107--143.


\end{thebibliography}
\end{document}